  \newcommand{\todo}[1]{\marginnote{\textcolor{red}{\parbox{4.5cm}{\raggedright\footnotesize #1}}}}
  \newcommand{\todo}[1]{}
\renewcommand{\arraystretch}{1.3}
\let\oldtheequation\theequation
\renewcommand\tagform@[1]{\maketag@@@{\ignorespaces#1\unskip\@@italiccorr}}
\renewcommand\theequation{(\oldtheequation)}
\numberwithin{figure}{section}
\numberwithin{table}{section}
\Crefname{equation}{Eq.}{Eqs.}
\Crefname{definition}{Def.}{Defs.}
\Crefname{subsection}{Section}{Sections}
\Crefname{subsubsection}{Section}{Sections}
\crefname{hypothesis}{Hypothesis}{Hypotheses}
\definecolor{colorq}{RGB}{44, 162, 95}
\definecolor{colorl}{RGB}{43, 140, 190}
\definecolor{colorc}{RGB}{227, 74, 51}
\pgfplotsset{PcwConst/.style={color=colorc, mark=triangle*,mark size=2}}
\pgfplotsset{PcwLin/.style={color=colorl, mark=square*,mark size=1.5}}
\pgfplotsset{PcwQuad/.style={color=colorq, mark=*,mark size=1.5}}
\pgfplotsset{Dashed/.style={dashed, color=black, mark options = solid, mark=x, mark size=3}}
\pgfplotsset{Dashed2/.style={color=gray, dash pattern=on 1pt off 1pt on 1pt off 1pt, mark options={solid} }}
\pgfplotsset{PcwConstComp/.style={color=colorc, mark=triangle,mark size=2, dash pattern=on 4pt off 1pt on 1pt off 1pt, mark options={solid} }}
\pgfplotsset{PcwLinComp/.style={color=colorl, mark=square,mark size=1.5, dash pattern=on 4pt off 1pt on 1pt off 1pt, mark options={solid}}}
\pgfplotsset{PcwQuadComp/.style={color=colorq, mark=o,mark size=1.5, dash pattern=on 4pt off 1pt on 1pt off 1pt, mark options={solid}}}
\tikzstyle{Dashed3}=[color=black, dash pattern=on 1pt off 1pt on 1pt off 1pt, mark options={solid}, line width = 0.5]
\tikzstyle{thick}=[draw=black,line width=0.95,opacity=1.0]
\tikzstyle{DashedDraw}=[draw=black, densely dashed, line width=0.15]
\newcommand{\OO}[1]{\mathcal{O}(#1)}
\newcommand{\Boxvars}[1]{\mathcal{#1}}
\newcommand{\Matrix}[1]{\mathbf{#1}}
\def \Amat {\mathbf{A}}
\def \Boxvar {\mathcal{B}}
\def \xsol {\mathbf{x}}
\def \Poly {\mathbf{\Pi}}
\newcommand{\algfull}{Sparse Geometric Factorization}
\newcommand{\algo}{SGF}
\newcommand{\nmat}{filtered interaction matrix}
\newcommand{\nmats}{filtered interaction matrices}
\newcommand{\schemeI}{Nest-2-All}
\newcommand{\schemeIa}{Nest-All-All}
\newcommand{\schemeIb}{Nest-2-2}
\newcommand{\schemeII}{Gen-All-All}
\title{Sparse Hierarchical Preconditioners Using Piecewise Smooth Approximations of Eigenvectors}
\author{Bazyli Klockiewicz\thanks{Institute for Computational \& Mathematical Engineering, Stanford University, \email{bazyli@stanford.edu} \url{}}
\and Eric Darve \thanks{Department of Mechanical Engineering, Stanford University, \email{darve@stanford.edu} \url{}}}
\begin{document}

\maketitle
\begin{abstract}
When solving linear systems arising from PDE discretizations, iterative methods (such as Conjugate Gradient, GMRES, or MINRES) are often the only practical choice. To converge in a small number of iterations, however, they have to be coupled with an efficient preconditioner. The efficiency of the preconditioner depends largely on its accuracy on the eigenvectors corresponding to small eigenvalues, and unfortunately, black-box methods typically cannot guarantee sufficient accuracy on these eigenvectors. Thus, constructing the preconditioner becomes a problem-dependent task. However, for a large class of problems, including many elliptic equations, the eigenvectors corresponding to small eigenvalues are smooth functions of the PDE grid. In this paper, we describe a hierarchical approximate factorization approach which focuses on improving accuracy on the smooth eigenvectors. The improved accuracy is achieved by preserving the action of the factorized matrix on piecewise polynomial functions of the grid. Based on the factorization, we propose a family of sparse preconditioners with $\OO{n}$ or $\OO{n \log{n}}$ construction complexities. Our methods exhibit near optimal scalings of solution times in benchmarks run on large elliptic problems, arising for example in flow or mechanical simulations. In the case of the linear elasticity equation the preconditioners are exact on the near-kernel rigid body modes.
\end{abstract}

\begin{keywords}
 preconditioner, hierarchical factorization, sparse linear solver, hierarchical matrix, 
 smooth eigenvectors, low-rank, near-kernel, nested dissection, polynomial
\end{keywords}

\begin{AMS}
  65F05, 65F08, 65F50, 65Y20
\end{AMS}

\section{Introduction}\label{sec:intro}
A significant class of problems in engineering lead to large and sparse symmetric positive definite (SPD) systems: \begin{equation} \label{eq:ax=b} \Amat \xsol = \Matrix{b} \end{equation} where $\Amat \in \mathbb{R}^{n \times n}$ is a sparse SPD matrix, $\Matrix{b} \in \mathbb{R}^n$, and $\Matrix{x} \in \mathbb{R}^n$ is the desired unknown solution. In particular, we are interested in the discretizations of second-order elliptic partial differential equations (PDEs) obtained using finite stencil, finite volume, or finite element methods. Examples include the Laplace, elasticity, or (some cases of) Maxwell equations. A substantial effort in scientific computing has been devoted to efficiently solve \autoref{eq:ax=b} arising from such PDEs.

\subsection{Previous work}\label{sec:previous}
The most reliable method for solving \autoref{eq:ax=b} is the (exact) Cholesky factorization. A naive implementation has $\OO{n^3}$ computational cost but sparsity can be exploited to limit the fill-ins and reduce the cost. Many methods have been designed to limit the fill-ins based on appropriately ordering the variables \cite{gilbert1992highly,ng1993block}. In the context of PDEs, an efficient method is nested dissection \cite{george1973nested,lipton1979generalized}, which can reduce the costs to $\OO{n^{3/2}}$ in 2D, and $\OO{n^2}$ in 3D, under mild assumptions. In fact, nested dissection is at the heart of many state-of-the-art direct solvers \cite{amestoy2000mumps,chen2008algorithm,henon2002pastix}, which are useful for small to middle-size systems. Still, the $\OO{n^2}$ complexity becomes impractical for large-scale problems.

Another group of approaches are the iterative methods such as multigrid \cite{hackbusch2013multi,brandt1977multi,xu1992iterative} or the Krylov-space methods. In particular, algebraic multigrid (AMG) \cite{ruge1987algebraic,vanvek1996algebraic,stuben2001review} does not require the knowledge of the grid geometry and removes some limitations of its predecessor, geometric multigrid. AMG can often achieve the optimal $\OO{n}$ solution cost, with good parallel scalability and low memory requirements. To ensure convergence, however, one has to properly design the smoothing and coarsening strategy. As a result, AMG often has to be fine-tuned or extended to be efficient for specific equations, or problems in question. Krylov-space methods on the other hand, such as Conjugate Gradient \cite{hestenes1952methods}, GMRES \cite{saad1986gmres}, or MINRES \cite{paige1975solution}, are a general group of approaches which utilize only sparse matrix-vector products. Convergence is very sensitive, however, to the conditioning of the given system, and in practice these methods have to be coupled with preconditioners. A popular class of black-box preconditioners are the ones based on incomplete factorizations such as incomplete Cholesky \cite{kershaw1978incomplete,jones1995improved,gustafsson1978class}, or more generally, incomplete LU (ILU) \cite{saad1994ilut,dupont1968approximate}. However, these preconditioners have rather limited applicability, not being able to target the whole spectrum of $\Amat$, and typically specialized problem-dependent preconditioners have to be developed. In fact, in the context of elliptic PDEs, the multigrid methods often prove effective preconditioners.

Another group of preconditioners are hierarchical algorithms which exploit the fact that in the context of elliptic PDEs, certain off-diagonal blocks of $\Amat$ or $\Amat^{-1}$ are numerically low-rank \cite{chandrasekaran2010numerical,bebendorf2005efficient,bebendorf2003existence,borm2010approximation}. 
Hierarchical approaches do not typically make any other assumptions about the system in question and therefore they can be more robust than multigrid methods, e.g., in the presence of non-smooth coefficients, or strong anisotropies. The theoretical framework for hierarchical approaches is provided by $\mathcal{H}$- and $\mathcal{H}^2$-matrices \cite{hackbusch1999sparse,hackbusch2002data} (developed originally for integral equations). They allow for performing algebraic operations on matrices with low-rank structure of off-diagonal blocks or well-separated blocks. In particular, the LU factorization (and applying the inverse) can be performed with linear or quasilinear complexity \cite{grasedyck2009domain} (typically with a given accuracy). In practice, however, the constants involved in the asymptotic scalings may be somewhat large due to the recursive nature of the algorithms which also require specific data-sparse formats for storing the matrices.

Recently, new hierarchical approaches have been proposed that concentrate on efficiently factorizing the matrix using the low-rank structure of the off-diagonal blocks (we further call them \emph{hierarchical solvers}). Hierarchical Interpolative Factorization \cite{ho2016hierarchical} is a sparsified nested dissection multifrontal approach which successively reduces the sizes of the separating fronts. LoRaSp \cite{pouransari2017fast} is a different approach, using extended sparsification, related to the inverse fast multipole method \cite{coulier2017inverse} (a similar method was also described in \cite{sushnikova2018compress}). These methods leverage the low-rank properties to factorize the matrix approximately into products of sparse block triangular factors, obviating the need for hierarchical data-sparse matrix formats. More robust extensions have been proposed since \cite{chen2018distributed,cambier2019spand,feliu2018recursively}. In particular, \cite{cambier2019spand} proposed Sparsified Nested Dissection that is guaranteed to never break and can be applied to any SPD matrix.

However, hierarchical solvers still lack good convergence guarantees. For example, one can prove that---on certain elliptic equations---geometric multigrid leads to a preconditioned system with a bounded condition number. The hierarchical approaches, on the other hand, often cannot guarantee rapid convergence of iterative methods because the accuracy of the approximation $\Amat_{\ell} \approx \Amat$ is controlled in the following sense:
\begin{equation}\label{eq:hs-approx} 
\| \Amat_{\ell} - \Amat \|_2 \leq \varepsilon \| \Amat \|_2
\end{equation}
whereas in fact, a stronger criterion is needed:
\begin{equation}\label{eq:crit}
\| \Matrix{I} - \Amat_{\ell}^{-1/2} \Amat \Amat_{\ell}^{-1/2} \|_2 \leq \varepsilon
\end{equation}
In particular, this means that $\Amat_{\ell}$ needs to be particularly accurate on the eigenvectors corresponding to small eigenvalues (the \emph{near-kernel eigenspace}), which is not assured by \autoref{eq:hs-approx}. However, for many elliptic PDEs, these eigenvectors are smooth and this property can be taken advantage of, to ensure \autoref{eq:crit}. This would make hierarchical solvers truly competitive, by guaranteeing a bounded number of Conjugate Gradient iterations, for instance.

\subsection{Contributions}
We introduce a hierarchical approach to approximately
factorize sparse symmetric positive definite (SPD) matrices arising from elliptic PDE
discretizations. We call it \emph{\algfull}{}, or \algo{} for short. Based on the factorization, we propose a family of preconditioners with varying accuracies and sparsity patterns. Depending on the chosen sparsity pattern, the preconditioners can be computed in $\OO{n}$ or $\OO{n \log{n}}$ operations where $n$ is the number of unknowns, with $\OO{n}$ memory requirements. The obtained cheaply-invertible operator $\Amat_{\ell} \approx \Amat$ retains the SPD property of $\Amat$, and the factorization always succeeds, with all pivots guaranteed to be invertible in exact arithmetic (in this sense, it follows \cite{cambier2019spand,chen2018distributed}).

\algo{} shares the general framework with \cite{cambier2019spand,ho2016hierarchical,chen2018distributed}, ensuring however that the obtained operator $\Amat_{\ell}\approx \Amat$ is also accurate on the critical near-kernel smooth eigenvectors, which addresses the limitation of hierarchical approaches mentioned above. Our choice of approximating the near-kernel subspace are the piecewise polynomial functions of the grid (e.g., piecewise constant, or piecewise linear vectors), but the space approximating the near-kernel subspace can be supplied by the user.

In the context of $\mathcal{H}$-matrix preconditioners for elliptic PDE operators, an improved accuracy on the near-kernel eigenspace can be achieved by preserving the action of $\Amat$ on piecewise constant vectors when defining the low-rank bases (see e.g. \cite{bebendorf2013constraints,bebendorf2016spectral} for details). An attempt to incorporate these ideas into hierarchical solvers was made in \cite{yang2016sparse}, which improved the robustness of LoRaSp \cite{pouransari2017fast} by preserving the action of $\Amat$ on the globally constant vector throughout computations. In our approach, sparsifying the off-diagonal blocks in the factorization is driven primarily by requiring that the action of $\Amat$ on piecewise polynomial functions be preserved. 

Compared to previous work~\cite{bebendorf2013constraints,bebendorf2016spectral,yang2016sparse}, we also investigate more options to select different types of blocks for compressions.  In particular, our methods allow sparsifying matrix blocks that cannot be expected to be low-rank, still resulting in effective preconditioners. Our family of preconditioners adapts the grid partitioning methods from \cite{cambier2019spand,ho2016hierarchical}, unifying them into one framework, and adding new approaches that allow for better control of the preconditioner sparsity pattern. To keep the paper focused, the partitionings are defined on cartesian grids and relatively simple stencils. However, the \algo{} algorithm is very general and does not assume any particular grid structure or discretization scheme, and can be applied to any SPD matrix~\cite{cambier2019spand}.

We benchmark the preconditioners on large systems of different types. The iteration counts of preconditioned Conjugate Gradient grow very slowly with the system sizes, and the solution times scale roughly as $\OO{n}$, also on ill-conditioned systems. We also compare our methods to equivalent approaches using rank-revealing decompositions to approximate the off-diagonal blocks, as in other hierarchical algorithms \cite{cambier2019spand,ho2016hierarchical,chen2018distributed,sushnikova2018compress,pouransari2017fast}. Our methods scale significantly better with growing system sizes on most tested problems.
While only briefly mentioned in this paper, we believe that our preconditioners have very promising parallelization properties (inherited from \cite{ho2016hierarchical,pouransari2017fast,cambier2019spand}). Some possible parallelization strategies have been described in \cite{chen2018distributed,li2017distributed}.

\subsection {Organization of the paper} An overview of the \algo{} algorithm is described in \autoref{sec:idea} along with some useful definitions. In \autoref{sec:compression} we describe our method to preserve piecewise polynomial vectors in the factorization. The \algo{} algorithm is then described in detail in \autoref{sec:detailed-description}. The proposed family of preconditioners, with specific realizations of the generic factorization algorithm, is described in \autoref{sec:solver-family}. Some numerical analysis is provided in \autoref{sec:app-quality}. Numerical experiments are presented in \autoref{sec:experiments}, followed by conclusions in \autoref{sec:conclusions}. 

\section{ Overview of \algfull{} (\algo)}\label{sec:idea} In the PDE applications, it is often convenient to consider a partition of the set of unknowns $V = \{1,2,\ldots,n\}$ into small disjoint subsets (here called \emph{nodes}) $V = B_1 \cup B_2 \cup \cdots \cup B_t$. The partition naturally induces a block representation of $\Amat$. The \emph{interaction matrix} $\Amat_{B_i B_j}$ between $B_i$ and $B_j$ will typically be nonzero only if the stencils of the discretization grid involving $B_i$ and $B_j$, are close to each other, resulting in sparse $\Amat$. 

\subsection{Algorithm} \label{sec:overview}  Given the framework, we perform a sparse approximate factorization of $\Amat,$ proceeding in a hierarchical fashion: 
\begin{enumerate}
\item (\emph{Elimination step}) Eliminate selected nodes (called interior nodes) using the block Cholesky factorization, to obtain $\Amat = \Matrix{G} \Amat_{\left( 1/2 \right)} \Matrix{G}^T$.\label{it:elim}
\item (\emph{Compression step}) Sparsify selected remaining nodes to obtain $\Amat_{\left(1/2 \right)} \approx \Matrix{B} \Amat_{(1)} \Matrix{B}^T,$ where $\Amat_{(1)}$ is sparser than $ \Amat_{\left( 1/2 \right)}$, and $\Matrix{B}$ is sparse block-diagonal.\label{it:comp}
\item Form a new coarser partition. If the new partition is composed of a single element, factorize $\Amat_{(1)}$ using exact Cholesky factorization. \item Otherwise recurse on the new partition and the submatrix of $\Amat_{(1)}$ corresponding to the not-yet-eliminated variables.\label{it:form}
\end{enumerate}
After all $\ell = \OO{\log{n}}$ iterations have been completed (also referred to as \emph{levels}), the obtained approximate operator, which is also SPD, is of the form:
\[ 
    \Amat \approx \Amat_{\ell} = \Matrix{G}_1 \Matrix{B}_1
\Matrix{G}_2 \Matrix{B}_2 \cdots \Matrix{G}_{\ell - 1} \Matrix{B}_{\ell -1} \Matrix{G}_{\ell}
\Matrix{G}_{\ell}^T \Matrix{B}^T_{\ell -1} \Matrix{G}^T_{\ell -1} \cdots \Matrix{B}^T_2
\Matrix{G}^T_2 \Matrix{B}^T_1\Matrix{G}^T_1
\]
The factorization is illustrated in \autoref{fig:illustration}. The partitions can be based on nested dissection \cite{george1973nested} which determines the interior nodes eliminated in \emph{Step~\ref{it:elim}} above. Elimination of an interior node introduces fill-in only between its neighboring separators. Simple domain partitioning may be used as well in which \emph{Step~\ref{it:elim}} is skipped.
\begin{figure}[htbp]
\centering
\subfloat[First partition.]{
	\begin{tikzpicture}
	\def \p{1.48} 
	\def \m{0.12} 
	\def \d{0.1} 
	\def \n{0.8} 
	\def \k{0.22} 
	\def \o{0.01} 
	\def \oo{0.03} 
	\def \a{0.36} 
	\def \b{0.115} 
	\draw (-\p,-\p) rectangle (\p,\p);
	\foreach \t in {0,...,3} {
		\foreach \s in {0,...,3} {		
			\draw [rounded corners=3pt,fill=black!40,fill opacity=0] 
			(\n*\t-\p+\oo,\n*\s-\p+\oo) rectangle (\n*\t-\p+\a*\p,\n*\s-\p+\a*\p);
			\foreach \x in {0,...,3} {
				\foreach \y in {0,...,3} {
					\node at (-\p+\n*\t+\m*\x+\d,-\p+\n*\s+\m*\y+\d) [circle,fill=black,inner sep=0pt,minimum size=0.08cm] {};
				}
			}
		}
	}
	\foreach \t in {1,...,3} {
		\foreach \s in {0,...,3} {
			\draw[Dashed3] (\n*\t-\p-\k-\o,-\p) -- (\n*\t-\p-\k-\o,\p);
			\draw[Dashed3] (\n*\t-\p-\o,-\p) -- (\n*\t-\p-\o,\p);		
			\draw [rounded corners=1pt,fill=black!40,fill opacity=0.5] 
			(\n*\t-\p-\k+\oo,\n*\s-\p+\oo) rectangle (\n*\t-\p+\b*\p-\k,\n*\s-\p+\a*\p);		
			\foreach \y in {0,...,3} {
				\node at (-\p+\n*\t-\k+\d,-\p+\n*\s+\m*\y+\d) [circle,fill=black,inner sep=0pt,minimum size=0.08cm] {};
			}
		}
	}
	\foreach \t in {1,...,3} {
		\foreach \s in {0,...,3} {
			\draw[Dashed3] (-\p,\n*\t-\p-\k-\o) -- (\p,\n*\t-\p-\k-\o);	
			\draw[Dashed3] (-\p,\n*\t-\p-\k+2*\d+\o) -- (\p,\n*\t-\p-\k+2*\d+\o);				
			\draw [rounded corners=1pt,fill=black!40,fill opacity=0.5] 
			(\n*\s-\p+\oo,\n*\t-\p-\k+\oo) rectangle (\n*\s-\p+\a*\p,\n*\t-\p+\b*\p-\k);	
			\foreach \y in {0,...,3} {
				\node at (-\p+\n*\s+\m*\y+\d,-\p+\n*\t-\k+\d) [circle,fill=black,inner sep=0pt,minimum size=0.08cm] {};
			}
		}
	}
	\foreach \t in {1,...,3} {
		\foreach \s in {1,...,3} {
			\draw [rounded corners=1pt,fill=black!40,fill opacity=0.5] 
			(\n*\t-\p-\k+\oo,\n*\s-\p-\k+\oo) rectangle (\n*\t-\p-\k+\b*\p,\n*\s-\p-\k+\b*\p);
			\node at (-\p+\n*\t-\k+\d,-\p+\n*\s-\k+\d) [circle,fill=black,inner sep=0pt,minimum size=0.08cm] {};		
		}
	}
\end{tikzpicture}}
\subfloat[Elimination step.]{
	\begin{tikzpicture}
	\def \p{1.48} 
	\def \m{0.12} 
	\def \d{0.1} 
	\def \n{0.8} 
	\def \k{0.22} 
	\def \o{0.01} 
	\def \oo{0.03} 
	\def \a{0.36} 
	\def \b{0.115} 
	\draw (-\p,-\p) rectangle (\p,\p);
	\foreach \t in {1,...,3} {
		\foreach \s in {0,...,3} {
			\draw[Dashed3] (\n*\t-\p-\k-\o,-\p) -- (\n*\t-\p-\k-\o,\p);
			\draw[Dashed3] (\n*\t-\p-\o,-\p) -- (\n*\t-\p-\o,\p);			
			\draw [rounded corners=1pt,fill=black!40,fill opacity=0.5] 
			(\n*\t-\p-\k+\oo,\n*\s-\p+\oo) rectangle (\n*\t-\p+\b*\p-\k,\n*\s-\p+\a*\p);
			\foreach \y in {0,...,3} {
				\node at (-\p+\n*\t-\k+\d,-\p+\n*\s+\m*\y+\d) [circle,fill=black,inner sep=0pt,minimum size=0.08cm] {};
			}
		}
	}
	\foreach \t in {1,...,3} {
		\foreach \s in {0,...,3} {
			\draw[Dashed3] (-\p,\n*\t-\p-\k-\o) -- (\p,\n*\t-\p-\k-\o);	
			\draw[Dashed3] (-\p,\n*\t-\p-\k+2*\d+\o) -- (\p,\n*\t-\p-\k+2*\d+\o);		
			\draw [rounded corners=1pt,fill=black!40,fill opacity=0.5] 
			(\n*\s-\p+\oo,\n*\t-\p-\k+\oo) rectangle (\n*\s-\p+\a*\p,\n*\t-\p+\b*\p-\k);
			\foreach \y in {0,...,3} {
				\node at (-\p+\n*\s+\m*\y+\d,-\p+\n*\t-\k+\d) [circle,fill=black,inner sep=0pt,minimum size=0.08cm] {};
			}
		}
	}
	\foreach \t in {1,...,3} {
		\foreach \s in {1,...,3} {
			\draw [rounded corners=1pt,fill=black!40,fill opacity=0.5] 
			(\n*\t-\p-\k+\oo,\n*\s-\p-\k+\oo) rectangle (\n*\t-\p-\k+\b*\p,\n*\s-\p-\k+\b*\p);		
			\node at (-\p+\n*\t-\k+\d,-\p+\n*\s-\k+\d) [circle,fill=black,inner sep=0pt,minimum size=0.08cm] {};		
		}
	}
\end{tikzpicture}}
\subfloat[Compression step.]{
	\begin{tikzpicture}
	\def \p{1.48} 
	\def \m{0.35} 
	\def \d{0.1} 
	\def \n{0.8} 
	\def \k{0.22} 
	\def \o{0.01} 
	\def \oo{0.03} 
	\def \a{0.36} 
	\def \b{0.115} 
	\draw (-\p,-\p) rectangle (\p,\p);
	\foreach \t in {1,...,3} {
		\foreach \s in {0,...,3} {
			\draw[Dashed3] (\n*\t-\p-\k-\o,-\p) -- (\n*\t-\p-\k-\o,\p);
			\draw[Dashed3] (\n*\t-\p-\o,-\p) -- (\n*\t-\p-\o,\p);		
			\draw [rounded corners=1pt,fill=black!40,fill opacity=0.5] 
			(\n*\t-\p-\k+\oo,\n*\s-\p+\oo) rectangle (\n*\t-\p+\b*\p-\k,\n*\s-\p+\a*\p);
			\foreach \y in {0,...,1} {
				\node at (-\p+\n*\t-\k+\d,-\p+\n*\s+\m*\y+\d) [circle,fill=black,inner sep=0pt,minimum size=0.08cm] {};
			}
		}
	}
	\foreach \t in {1,...,3} {
		\foreach \s in {0,...,3} {
			\draw[Dashed3] (-\p,\n*\t-\p-\k-\o) -- (\p,\n*\t-\p-\k-\o);	
			\draw[Dashed3] (-\p,\n*\t-\p-\k+2*\d+\o) -- (\p,\n*\t-\p-\k+2*\d+\o);		
			\draw [rounded corners=1pt,fill=black!40,fill opacity=0.5] 
			(\n*\s-\p+\oo,\n*\t-\p-\k+\oo) rectangle (\n*\s-\p+\a*\p,\n*\t-\p+\b*\p-\k);
			\foreach \y in {0,...,1} {
				\node at (-\p+\n*\s+\m*\y+\d,-\p+\n*\t-\k+\d) [circle,fill=black,inner sep=0pt,minimum size=0.08cm] {};
			}
		}
	}
	\foreach \t in {1,...,3} {
		\foreach \s in {1,...,3} {
			\draw [rounded corners=1pt,fill=black!40,fill opacity=0.5] 
			(\n*\t-\p-\k+\oo,\n*\s-\p-\k+\oo) rectangle (\n*\t-\p-\k+\b*\p,\n*\s-\p-\k+\b*\p);	
			\node at (-\p+\n*\t-\k+\d,-\p+\n*\s-\k+\d) [circle,fill=black,inner sep=0pt,minimum size=0.08cm] {};		
		}
	}
\end{tikzpicture}}\,
\subfloat[Coarser partition.]{
	\begin{tikzpicture}
	\def \p{1.48} 
	\def \m{0.35} 
	\def \d{0.1} 
	\def \n{0.8} 
	\def \k{0.22} 
	\def \o{0.01} 
	\def \oo{0.03} 
	\def \a{0.9} 
	\def \b{0.115} 
	\draw (-\p,-\p) rectangle (\p,\p);
	\foreach \t in {0,2} {
		\foreach \s in {0,2} {		
			\draw [rounded corners=3pt,fill=black!40,fill opacity=0] 
			(\n*\t-\p+\oo,\n*\s-\p+\oo) rectangle (\n*\t-\p+\a*\p,\n*\s-\p+\a*\p);
		}
	}
	\foreach \t in {2} {
		\foreach \s in {0,2} {
			\draw[Dashed3] (\n*\t-\p-\k-\o,-\p) -- (\n*\t-\p-\k-\o,\p);
			\draw[Dashed3] (\n*\t-\p-\o,-\p) -- (\n*\t-\p-\o,\p);
			\draw [rounded corners=1pt,fill=black!40,fill opacity=0.5] 
			(\n*\t-\p-\k+\oo,\n*\s-\p+\oo) rectangle (\n*\t-\p+\b*\p-\k,\n*\s-\p+\a*\p);
		}
	}
	\foreach \t in {1,...,3} {
		\foreach \s in {0,...,3} {
			\foreach \y in {0,...,1} {
				\node at (-\p+\n*\t-\k+\d,-\p+\n*\s+\m*\y+\d) [circle,fill=black,inner sep=0pt,minimum size=0.08cm] {};
			}
		}
	}
	\foreach \t in {2} {
		\foreach \s in {0,2} {
			\draw[Dashed3] (-\p,\n*\t-\p-\k-\o) -- (\p,\n*\t-\p-\k-\o);	
			\draw[Dashed3] (-\p,\n*\t-\p-\k+2*\d+\o) -- (\p,\n*\t-\p-\k+2*\d+\o);	
			\draw [rounded corners=1pt,fill=black!40,fill opacity=0.5] 
			(\n*\s-\p+\oo,\n*\t-\p-\k+\oo) rectangle (\n*\s-\p+\a*\p,\n*\t-\p+\b*\p-\k);
		}
	}
	\foreach \t in {1,...,3} {
		\foreach \s in {0,...,3} {
			\foreach \y in {0,...,1} {
				\node at (-\p+\n*\s+\m*\y+\d,-\p+\n*\t-\k+\d) [circle,fill=black,inner sep=0pt,minimum size=0.08cm] {};
			}
		}
	}
	\foreach \t in {2} {
		\foreach \s in {2} {
			\draw [rounded corners=1pt,fill=black!40,fill opacity=0.5] 
			(\n*\t-\p-\k+\oo,\n*\s-\p-\k+\oo) rectangle (\n*\t-\p-\k+\b*\p,\n*\s-\p-\k+\b*\p);	
			\node at (-\p+\n*\t-\k+\d,-\p+\n*\s-\k+\d) [circle,fill=black,inner sep=0pt,minimum size=0.08cm] {};		
		}
	}
	\foreach \t in {1,...,3} {
		\foreach \s in {1,...,3} {
			\node at (-\p+\n*\t-\k+\d,-\p+\n*\s-\k+\d) [circle,fill=black,inner sep=0pt,minimum size=0.08cm] {};		
		}
	}
\end{tikzpicture}}
\subfloat[Elimination step.]{
	\begin{tikzpicture}
	\def \p{1.48} 
	\def \m{0.35} 
	\def \d{0.1} 
	\def \n{0.8} 
	\def \k{0.22} 
	\def \o{0.01} 
	\def \oo{0.03} 
	\def \a{0.9} 
	\def \b{0.115} 
	\draw (-\p,-\p) rectangle (\p,\p);
	\foreach \t in {2} {
		\foreach \s in {0,2} {
			\draw[Dashed3] (\n*\t-\p-\k-\o,-\p) -- (\n*\t-\p-\k-\o,\p);
			\draw[Dashed3] (\n*\t-\p-\o,-\p) -- (\n*\t-\p-\o,\p);
			\draw [rounded corners=1pt,fill=black!40,fill opacity=0.5] 
			(\n*\t-\p-\k+\oo,\n*\s-\p+\oo) rectangle (\n*\t-\p+\b*\p-\k,\n*\s-\p+\a*\p);
		}
	}
	\foreach \t in {2} {
		\foreach \s in {0,...,3} {
			\foreach \y in {0,...,1} {
				\node at (-\p+\n*\t-\k+\d,-\p+\n*\s+\m*\y+\d) [circle,fill=black,inner sep=0pt,minimum size=0.08cm] {};
			}
		}
	}
	\foreach \t in {2} {
		\foreach \s in {0,2} {
			\draw[Dashed3] (-\p,\n*\t-\p-\k-\o) -- (\p,\n*\t-\p-\k-\o);	
			\draw[Dashed3] (-\p,\n*\t-\p-\k+2*\d+\o) -- (\p,\n*\t-\p-\k+2*\d+\o);
			\draw [rounded corners=1pt,fill=black!40,fill opacity=0.5] 
			(\n*\s-\p+\oo,\n*\t-\p-\k+\oo) rectangle (\n*\s-\p+\a*\p,\n*\t-\p+\b*\p-\k);
		}
	}
	\foreach \t in {2} {
		\foreach \s in {0,...,3} {
			\foreach \y in {0,...,1} {
				\node at (-\p+\n*\s+\m*\y+\d,-\p+\n*\t-\k+\d) [circle,fill=black,inner sep=0pt,minimum size=0.08cm] {};
			}
		}
	}
	\foreach \t in {2} {
		\foreach \s in {1,3} {
			\node at (-\p+\n*\t-\k+\d,-\p+\n*\s-\k+\d) [circle,fill=black,inner sep=0pt,minimum size=0.08cm] {};		
		}
	}
	\foreach \t in {1,3} {
		\foreach \s in {2} {
			\node at (-\p+\n*\t-\k+\d,-\p+\n*\s-\k+\d) [circle,fill=black,inner sep=0pt,minimum size=0.08cm] {};		
		}
	}

	\foreach \t in {2} {
		\foreach \s in {2} {
			\draw [rounded corners=1pt,fill=black!40,fill opacity=0.5] 
			(\n*\t-\p-\k+\oo,\n*\s-\p-\k+\oo) rectangle (\n*\t-\p-\k+\b*\p,\n*\s-\p-\k+\b*\p);	
			\node at (-\p+\n*\t-\k+\d,-\p+\n*\s-\k+\d) [circle,fill=black,inner sep=0pt,minimum size=0.08cm] {};		
		}
	}	
\end{tikzpicture}}
\subfloat[Compression step.]{	\begin{tikzpicture}
	\def \p{1.48} 
	\def \m{1.15} 
	\def \d{0.1} 
	\def \n{0.8} 
	\def \k{0.22} 
	\def \o{0.01} 
	\def \oo{0.03} 
	\def \a{0.9} 
	\def \b{0.115} 
	\draw (-\p,-\p) rectangle (\p,\p);
	\foreach \t in {2} {
		\foreach \s in {0,2} {
			\draw[Dashed3] (\n*\t-\p-\k-\o,-\p) -- (\n*\t-\p-\k-\o,\p);
			\draw[Dashed3] (\n*\t-\p-\o,-\p) -- (\n*\t-\p-\o,\p);
			\draw [rounded corners=1pt,fill=black!40,fill opacity=0.5] 
			(\n*\t-\p-\k+\oo,\n*\s-\p+\oo) rectangle (\n*\t-\p+\b*\p-\k,\n*\s-\p+\a*\p);
		}
	}
	\foreach \t in {2} {
		\foreach \s in {0,2} {
			\foreach \y in {0,...,1} {
				\node at (-\p+\n*\t-\k+\d,-\p+\n*\s+\m*\y+\d) [circle,fill=black,inner sep=0pt,minimum size=0.08cm] {};
			}
		}
	}
	\foreach \t in {2} {
		\foreach \s in {0,2} {
			\draw[Dashed3] (-\p,\n*\t-\p-\k-\o) -- (\p,\n*\t-\p-\k-\o);	
			\draw[Dashed3] (-\p,\n*\t-\p-\k+2*\d+\o) -- (\p,\n*\t-\p-\k+2*\d+\o);
			\draw [rounded corners=1pt,fill=black!40,fill opacity=0.5] 
			(\n*\s-\p+\oo,\n*\t-\p-\k+\oo) rectangle (\n*\s-\p+\a*\p,\n*\t-\p+\b*\p-\k);
		}
	}
	\foreach \t in {2} {
		\foreach \s in {0,2} {
			\foreach \y in {0,...,1} {
				\node at (-\p+\n*\s+\m*\y+\d,-\p+\n*\t-\k+\d) [circle,fill=black,inner sep=0pt,minimum size=0.08cm] {};
			}
		}
	}
	\foreach \t in {2} {
		\foreach \s in {2} {
			\draw [rounded corners=1pt,fill=black!40,fill opacity=0.5] 
			(\n*\t-\p-\k+\oo,\n*\s-\p-\k+\oo) rectangle (\n*\t-\p-\k+\b*\p,\n*\s-\p-\k+\b*\p);	
			\node at (-\p+\n*\t-\k+\d,-\p+\n*\s-\k+\d) [circle,fill=black,inner sep=0pt,minimum size=0.08cm] {};		
		}
	}
	\foreach \t in {2} {
		\foreach \s in {2} {
			\node at (-\p+\n*\t-\k+\d,-\p+\n*\s-\k+\d) [circle,fill=black,inner sep=0pt,minimum size=0.08cm] {};		
		}
	}
\end{tikzpicture}}\,
\subfloat[Last partition.]{	\begin{tikzpicture}
	\def \p{1.48} 
	\def \m{1.15} 
	\def \d{0.1} 
	\def \n{0.8} 
	\def \k{0.22} 
	\def \o{0.01} 
	\def \oo{0.03} 
	\def \a{1.975} 
	\def \b{0.115} 
	\draw (-\p,-\p) rectangle (\p,\p);
	\foreach \t in {0} {
		\foreach \s in {0} {		
			\draw [rounded corners=3pt,fill=black!40,fill opacity=0] 
			(\n*\t-\p+\oo,\n*\s-\p+\oo) rectangle (\n*\t-\p+\a*\p,\n*\s-\p+\a*\p);
		}
	}
	\foreach \t in {2} {
		\foreach \s in {0,2} {
			\foreach \y in {0,...,1} {
				\node at (-\p+\n*\t-\k+\d,-\p+\n*\s+\m*\y+\d) [circle,fill=black,inner sep=0pt,minimum size=0.08cm] {};
			}
		}
	}
	\foreach \t in {2} {
		\foreach \s in {0,2} {
			\foreach \y in {0,...,1} {
				\node at (-\p+\n*\s+\m*\y+\d,-\p+\n*\t-\k+\d) [circle,fill=black,inner sep=0pt,minimum size=0.08cm] {};
			}
		}
	}
	\foreach \t in {2} {
		\foreach \s in {2} {
			\node at (-\p+\n*\t-\k+\d,-\p+\n*\s-\k+\d) [circle,fill=black,inner sep=0pt,minimum size=0.08cm] {};		
		}
	}
\end{tikzpicture}}\,
\subfloat[Sparsity patterns of the middle (trailing) matrices ($\Amat$, $\Amat_{(1/2)}$, or $\Amat_{(1)}$), corresponding to the algorithm steps shown above.]{
\begin{tikzpicture}
\node[inner sep=0pt] at (0,0) {
\includegraphics[width=0.22\textwidth]{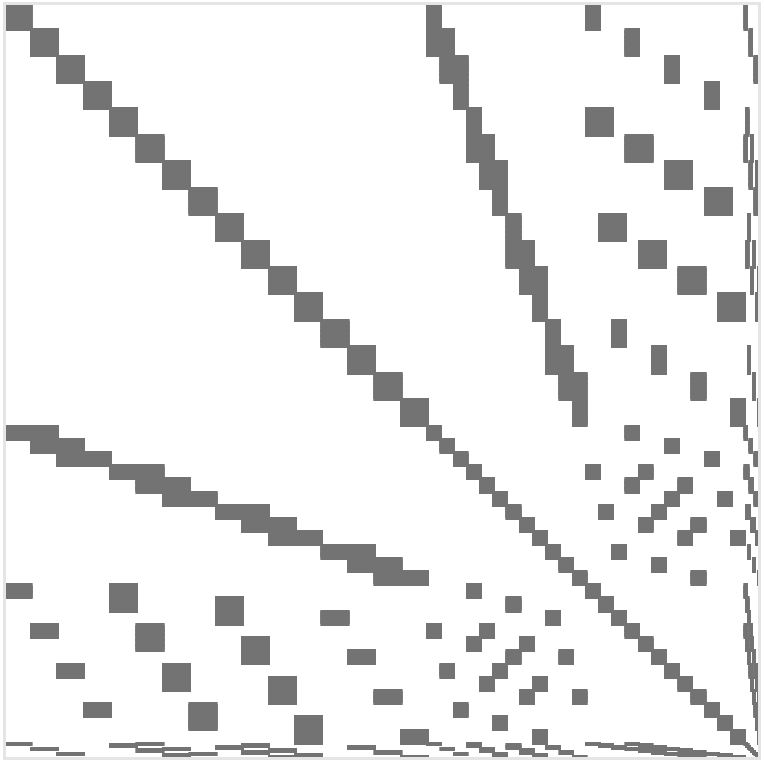}};
\node[inner sep=0pt] at (3.2,0) {
\includegraphics[width=0.22\textwidth]{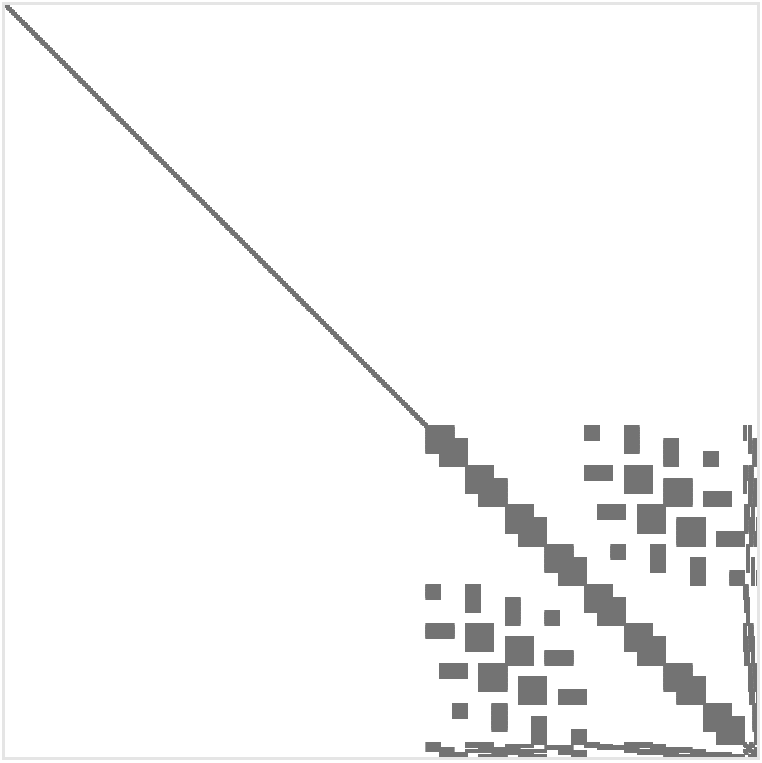}};
\node[inner sep=0pt] at (6.4,0) {
\includegraphics[width=0.22\textwidth]{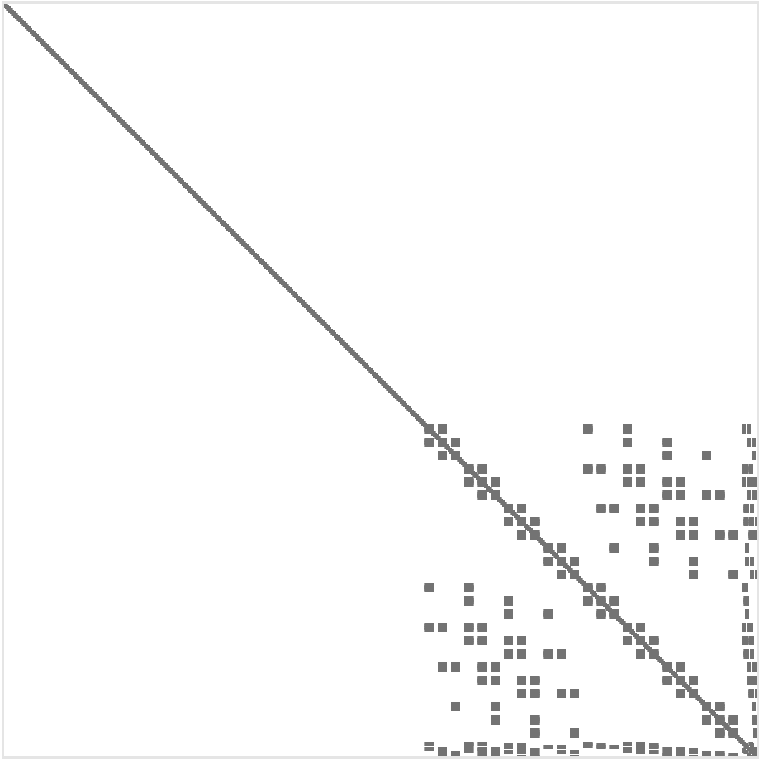}};
\node[inner sep=0pt] at (-1.6,-3.5) {
\includegraphics[width=0.22\textwidth]{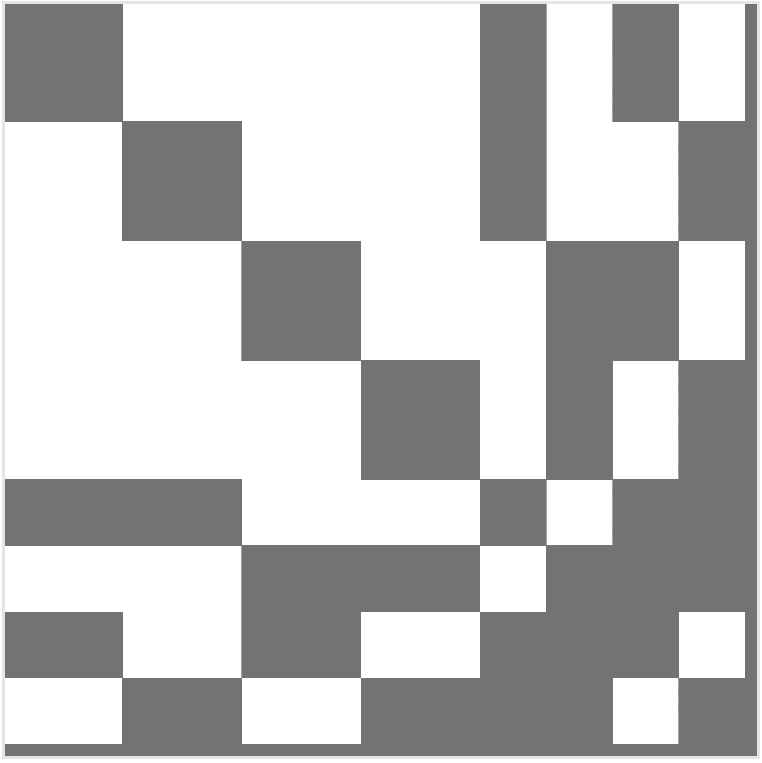}};
\draw[thick,dashed] (7.15,-0.8) circle (0.95cm);
\draw[thick,dashed,-{Latex[length=2mm]}] (6.4,-1.4) -- (-0.2,-2.0);
\node[inner sep=0pt] at (1.6,-3.5) {
\includegraphics[width=0.22\textwidth]{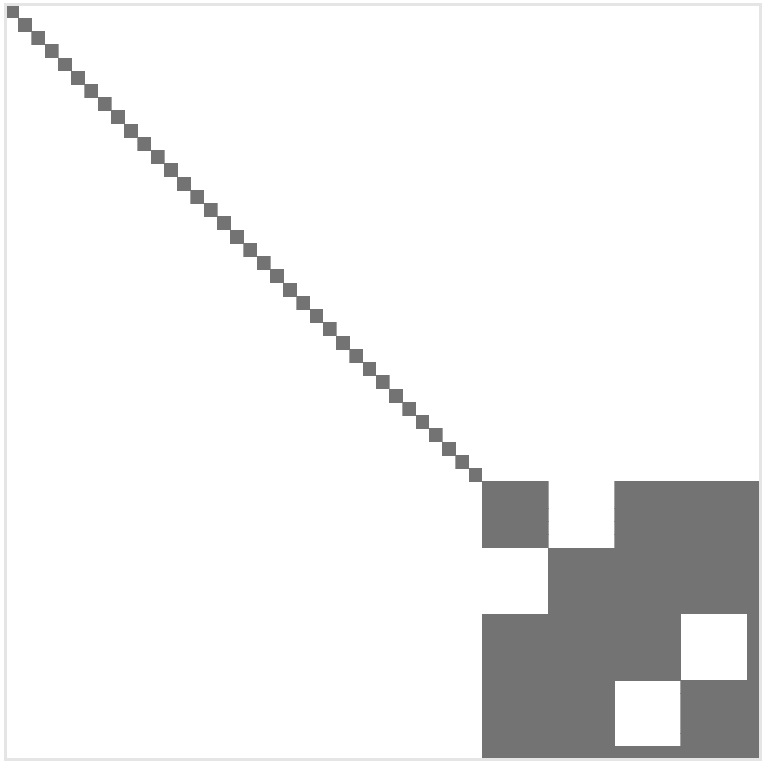}};
\node[inner sep=0pt] at (4.8,-3.5) {
\includegraphics[width=0.22\textwidth]{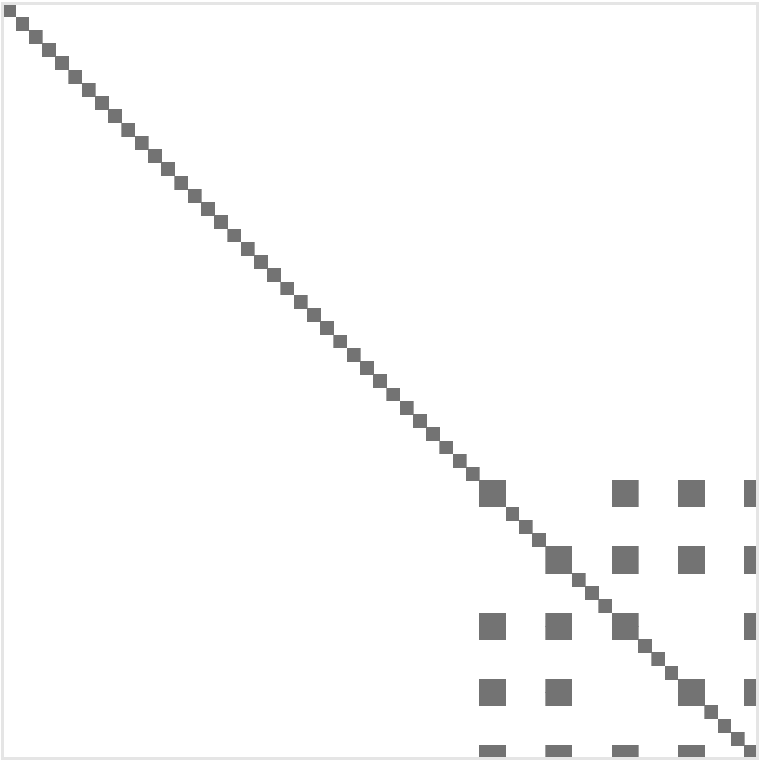}};
\node[inner sep=0pt] at (8.0,-3.5) {
\includegraphics[width=0.22\textwidth]{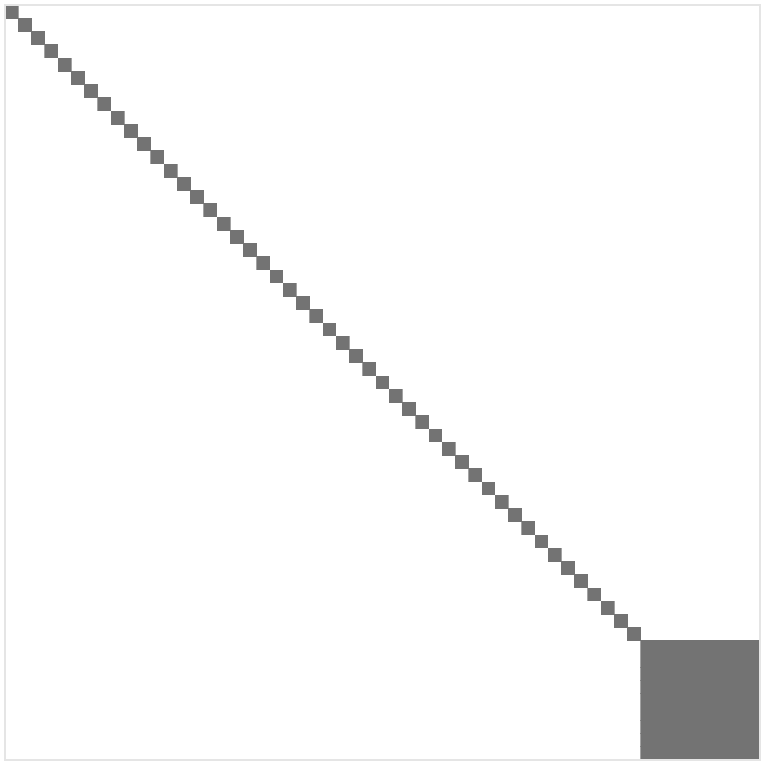}};
\end{tikzpicture}}
\caption{Illustration of the \algfull{} algorithm. In (a)--(g), the  uneliminated unknowns are denoted by black dots. Also visible is the distinction between seaparators (gray) and interiors (white). The separators ensure that interiors do not interact with each other. This is visible in the matrix sparsity patterns shown in (h) where interiors, if any, precede separators in the index ordering.}
\label{fig:illustration}
\end{figure}

The key step of the algorithm is Compression (\emph{Step~\ref{it:comp}}). Without it, the factorization would become an exact block Cholesky decomposition. The role of compression is to repeatedly eliminate additional variables from each selected node, to ensure $\mathcal{O}(n)$ complexity of applying the approximate operator $\Amat_{\ell}^{-1}$. This elimination is approximate, that is, each compression introduces small error; however, the way $\Amat$ acts on piecewise polynomial functions of the grid, is preserved, thus retaining accuracy on smooth eigenvectors. Compression is described in \autoref{sec:compression}.
\subsection{Domain partition and partition graph}\label{sec:defs}
We denote the set of row indices of $\Amat$ (called \emph{unknowns} or \emph{variables}) by $V = \{1,2, \ldots, n \}$.

\begin{definition}\label{def:partition} A \emph{partition} of $V = \{1,2,\ldots,n \}$ is a finite collection $P(V) = \{ B_1, B_2, \ldots, B_t \}$ of disjoint subsets $B_i \subseteq V$ such that $V = B_1 \cup B_2 \cup B_3 \cup \cdots \cup B_t$. \end{definition}

\begin{definition}\label{def:partition-coarsening} We say that a partition $P'(V)$ is a \emph{coarsening} of partition $P(V)$, (or a \emph{coarser partition}) which we denote by $P(V) \prec P'(V)$, if for each set $D \in P'(V)$, there is a subcollection $\{ C_1, C_2, \ldots, C_s \} \subseteq P(\Omega)$ such that $D = C_1 \cup C_2 \cup \cdots \cup C_s$.
\end{definition}

In the sequel, we will distinguish the variables from $B_i$ that have not been eliminated since the eliminated ones no longer play a role in the factorization.

\begin{definition}\label{def:index-set} Let $P_k(V)$ denote the partition in the level $k$ of the algorithm outlined in \autoref{sec:overview}. The \emph{active subset} of $B \in P_k(V)$, denoted by $\Boxvar$, is the subset of variables in $B$ that are not eliminated at the current stage of the algorithm.
\end{definition}

\begin{definition}\label{def:partition-graph} Let $P_k(V) = \{ B_1, B_2, \ldots, B_{t_k} \}$ be the partition in the $k$-th level of the algorithm outlined in \autoref{sec:overview}. The associated set of \emph{nodes} is defined as $N_k(V) = \{ (B_1,\Boxvar_1), (B_2,\Boxvar_2), \ldots, (B_{t_k},\Boxvar_{t_k}) \},$ where $\Boxvar_i$ is the active subset of $B_i$. \end{definition}

We assume that $\Amat_{\mathcal{B}_i \mathcal{B}_j}$ (the interaction matrix between two nodes) can be nonzero only if grid stencils corresponding to some elements of $B_i$ and $B_j$, are close to each other. The number of variables $|\Boxvar|$ will be referred to as the \emph{size} of the node.


\section{Key idea: polynomial compression}\label{sec:compression}
The key difference between our approach and similar methods performing sparse approximate factorizations (\cite{pouransari2017fast,ho2016hierarchical,cambier2019spand,sushnikova2018compress,minden2017recursive,chen2019robust}), is the design of compression step. The idea of compression is to drop interactions of many variables while retaining accuracy on the near-kernel eigenspace. In the case of second-order elliptic differential operators, it is known that the eigenvectors corresponding to smallest eigenvalues are smooth. We will therefore assume that an eigenvector with small associated eigenvalue can be locally approximated well by a discretized low-degree polynomial of the grid (see below). More generally, the subspace approximating the near-kernel eigenspace can be supplied by the user.


\subsection{Basis of discretized polynomials}\label{sec:polynomial-basis}
 We denote by $\Poly^0$ the $n \times 1$ matrix (a vector of length $n$) of ones. If every unknown has a specified location in $\mathbb{R}^3$, e.g., the location of the underlying grid vertex, let $(x_i, y_i, z_i) \in \mathbb{R}^3$ denote the location of the $i$-th unknown, for $i \in V = \{ 1,2,3, \ldots , n \}$. If that be the case, we define:
\begin{displaymath}
\begin{matrix}
	\Poly^1 := 
	\begin{pmatrix}
		1 & x_1 & y_1 & z_1 \\
		1 & x_2 & y_2 & z_2 \\
		\vdots & \vdots & \vdots & \vdots \\
		1 & x_n & y_n & z_n 
	\end{pmatrix}, &
	\Poly^2 := 
	\begin{pmatrix}
		\begin{matrix}
		\Poly^1
		\end{matrix}		
		&
		\begin{matrix}
		x_1^2 & y_1^2 & z_1^2 & x_1 y_1 & y_1 z_1 & z_1 x_1 \\
		x_2^2 & y_1^2 & z_1^2 & x_2 y_2 & y_2 z_2 & z_2 x_2 \\
		\vdots & \vdots & \vdots & \vdots & \vdots & \vdots \\
		x_n^2 & y_n^2 & z_n^2 & x_n y_n & y_n z_n & z_n x_n \\
		\end{matrix}
	\end{pmatrix}
\end{matrix}	
\end{displaymath}
In other words, the columns of the matrix $\Poly^j$ for $j = 0,1,2$, are a basis of the space of discretized real polynomials of degree $j$.
Once the degree has been chosen, we write $\Poly := \Poly^j$. We denote the number of columns of $\Poly$ by $\pi$. 
The definition of $\Matrix{\Poly}$ may need to be modified when more than one variable has the same underlying location, for example in the case of a vector PDE (see \autoref{test:linel}).

\subsection{Compression step}\label{sec:compress-details}
Given an SPD matrix $\Matrix{M} \in \mathbb{R}^{n \times n}$ and a node $(B,\Boxvar) \in N(V)$, let $\Boxvars{N}$ denote the set of variables interacting with $\Boxvar$ and let $\Boxvars{W}$ denote the set of all the remaining variables. We have:
\begin{equation}
	\Matrix{M} =
	\begin{pmatrix}
	\Matrix{M}_{\Boxvar \Boxvar} & \Matrix{M}_{\Boxvar \Boxvars{N}} & \\
	\Matrix{M}_{\Boxvars{N} \Boxvar} & \Matrix{M}_{\Boxvars{N}\Boxvars{N}} & \Matrix{M}_{\Boxvars{N}\Boxvars{W}} \\
	 & \Matrix{M}_{\Boxvars{W} \Boxvars{N}} & \Matrix{M}_{\Boxvars{W} \Boxvars{W}}
	\end{pmatrix}
\end{equation}
Compressing interactions of $(B,\Boxvar)$ will not modify the interactions of $\Boxvars{W}$ and so to keep compact notation, we consider only the upper left $2 \times 2$ block. Also, for simplicity, assume that there are only two nodes having nonzero interactions with $(B,\Boxvar)$, and denote them by $(N_1,\Boxvars{N}_1),$ and $(N_2, \Boxvars{N}_2)$, respectively. 

First, we scale the block row and column corresponding to $(B,\Boxvar)$:
\begin{equation}\label{eq:compr-scale}
\Matrix{M} =
	 \begin{pmatrix}
	\Matrix{L} & \\
					& \Matrix{I} \\
	 \end{pmatrix}
\begin{pmatrix}
\Matrix{I} & \Matrix{\widehat{M}}_{\mathcal{B}  \Boxvars{N}_1} & \Matrix{\widehat{M}}_{\mathcal{B}  \Boxvars{N}_2}  \\
 \Matrix{\widehat{M}}_{\Boxvars{N}_1 \Boxvar} & \Matrix{M}_{\Boxvars{N}_1  \Boxvars{N}_1} & \Matrix{M}_{\Boxvars{N}_1  \Boxvars{N}_2}  \\
  \Matrix{\widehat{M}}_{\Boxvars{N}_2 \Boxvar} & \Matrix{M}_{\Boxvars{N}_2  \Boxvars{N}_1} & \Matrix{M}_{\Boxvars{N}_2  \Boxvars{N}_2} \\
\end{pmatrix}
\begin{pmatrix}
	\Matrix{L}^T & \\
					& \Matrix{I} \\
	 \end{pmatrix}
\end{equation}
where $ \Matrix{M}_{\Boxvar \Boxvar} = \Matrix{L} \Matrix{L}^T$ denotes the Cholesky decomposition, and $\Matrix{\widehat{M}}_{\mathcal{B}  \Boxvars{N}} = \Matrix{L}^{-1}  \Matrix{M}_{\mathcal{B}  \Boxvars{N}}.$
Now, let $\Matrix{\Phi}_B = \Poly(\Boxvar,:),$ and likewise for $(N_1,\Boxvars{N}_1)$ and $(N_2,\Boxvars{N}_2)$. Then
\begin{displaymath}
\Matrix{\Phi} = 
\begin{pmatrix} 
	\Matrix{\Phi}_B & & \\
	& \Matrix{\Phi}_{N_1} & \\
	& & \Matrix{\Phi}_{N_2}
\end{pmatrix}
\end{displaymath}
 spans the space of piecewise (discretized) polynomial functions. Consider the $|\Boxvar| \times 3 \pi$ matrix (which we call the \emph{\nmat}):
\begin{equation}\label{eq:qr-overview}
\Matrix{N} = \begin{pmatrix} \Matrix{L}^T \Matrix{\Phi}_{B} &  \Matrix{\widehat{M}}_{\Boxvar \Boxvars{N}_1} \Matrix{\Phi}_{N_1} & 
\Matrix{\widehat{M}}_{\Boxvar \Boxvars{N}_2} \Matrix{\Phi}_{N_2} 
\end{pmatrix}
\end{equation}
and an orthogonal basis $\Matrix{Q}_1$ for its range, e.g., from the column-pivoted QR decomposition:
\begin{equation}\label{eq:QR}
\Matrix{N}\Matrix{P} = \Matrix{Q} \Matrix{R} = 
\begin{pmatrix} \Matrix{Q}_1 & \Matrix{Q}_2 \end{pmatrix} 
\begin{pmatrix} \Matrix{R}_1 \\ { } \end{pmatrix}
\end{equation}
The matrix $\Matrix{N}$ will typically have a small number of columns, and therefore $\Matrix{Q}_1$, spanning its range, is thin. Notice also that $\Matrix{Q}_2^T\Matrix{\widehat{M}}_{\Boxvar \Boxvars{N}_1}  \Matrix{\Phi}_{N_1}$, $\Matrix{Q}_2^T \Matrix{\widehat{M}}_{\Boxvar \Boxvars{N}_2} \Matrix{\Phi}_{N_2},$ and $\Matrix{Q}_2^T \Matrix{L}^T \Matrix{\Phi}_B$ are zero matrices because $\Matrix{Q}_2$ spans the space orthogonal to the range of $\Matrix{N}$. With
\[
\Matrix{B} = 
\begin{pmatrix}
 \Matrix{LQ } & \\
 & \Matrix{I}  
\end{pmatrix}
\]
we therefore have:
\begin{align*} \label{eq:APhi}
\Matrix{M} \Matrix{\Phi} &
=
\Matrix{B}
\begin{pmatrix}
    \Matrix{I} &\Matrix{Q}^T \Matrix{\widehat{M}}_{\Boxvar \Boxvars{N}_1} & \Matrix{Q}^T  \Matrix{\widehat{M}}_{\Boxvar \Boxvars{N}_2} \\
    \Matrix{\widehat{M}}_{ \Boxvars{N}_1 \Boxvar } \Matrix{Q}  &\Matrix{M}_{\Boxvars{N}_1 \Boxvars{N}_1 } &  \Matrix{M}_{\Boxvars{N}_1 \Boxvars{N}_2} & \\
    \Matrix{\widehat{M}}_{\Boxvars{N}_2 \Boxvar}  \Matrix{Q} & \Matrix{M}_{\Boxvars{N}_2 \Boxvars{N}_1}  &  \Matrix{M}_{\Boxvars{N}_2 \Boxvars{N}_2}
\end{pmatrix}
\Matrix{B}^T
\begin{pmatrix} 
    \Matrix{\Phi}_B & & \\
    & \Matrix{\Phi}_{N_1} & \\
    & & \Matrix{\Phi}_{N_2}
\end{pmatrix}  \\
 = &
 \Matrix{B}
\begin{pmatrix}
    \Matrix{I} & &\Matrix{Q}_1^T \Matrix{\widehat{M}}_{\Boxvar \Boxvars{N}_1} & \Matrix{Q}_1^T  \Matrix{\widehat{M}}_{\Boxvar \Boxvars{N}_2} \\
    & \Matrix{I} & \Matrix{Q}_2^T \Matrix{\widehat{M}}_{\Boxvar \Boxvars{N}_1} & \Matrix{Q}_2^T  \Matrix{\widehat{M}}_{\Boxvar \Boxvars{N}_2} \\
    \Matrix{\widehat{M}}_{\Boxvars{N}_1 \Boxvar} \Matrix{Q}_1 & \Matrix{\widehat{M}}_{\Boxvars{N}_1 \Boxvar} \Matrix{Q}_2 &\Matrix{M}_{\Boxvars{N}_1 \Boxvars{N}_1} &  \Matrix{M}_{\Boxvars{N}_1 \Boxvars{N}_2} \\
    \Matrix{\widehat{M}}_{\Boxvars{N}_2 \Boxvar}  \Matrix{Q}_1 & \Matrix{\widehat{M}}_{\Boxvars{N}_1 \Boxvar} \Matrix{Q}_2 & \Matrix{M}_{\Boxvars{N}_2 \Boxvars{N}_1}  &  \Matrix{M}_{\Boxvars{N}_2 \Boxvars{N}_2 }
\end{pmatrix}
\begin{pmatrix} 
    \Matrix{Q}_1^T\Matrix{L}^T\Matrix{\Phi}_B & & \\
    \Matrix{Q}_2^T\Matrix{L}^T\Matrix{\Phi}_B & & \\
    & \Matrix{\Phi}_{N_1} & \\
    & & \Matrix{\Phi}_{N_2}
\end{pmatrix}
 \notag \\
= &
\Matrix{B}
\begin{pmatrix}
     \Matrix{Q}_1^T \Matrix{L}^T  \Matrix{\Phi}_B & \Matrix{Q}_1^T \Matrix{\widehat{M}}_{\Boxvar \Boxvars{N}_1 } \Matrix{\Phi}_{N_1} & \Matrix{Q}_1^T  \Matrix{\widehat{M}}_{\Boxvar \Boxvars{N}_2} \Matrix{\Phi}_{N_2}\\
      \Matrix{Q}_2^T \Matrix{L}^T \Matrix{\Phi}_B   &  \Matrix{Q}_2^T \Matrix{\widehat{M}}_{\Boxvar \Boxvars{N}_1 } \Matrix{\Phi}_{N_1}  & \Matrix{Q}_2^T  \Matrix{\widehat{M}}_{\Boxvar \Boxvars{N}_2} \Matrix{\Phi}_{N_2} \\
    \Matrix{\widehat{M}}_{\Boxvars{N}_1 \Boxvar} \Matrix{Q}_1 \Matrix{Q}_1^T \Matrix{L}^T \Matrix{\Phi}_B + \Matrix{\widehat{M}}_{\Boxvars{N}_1\Boxvar} \Matrix{Q}_2 \Matrix{Q}_2^T\Matrix{L}^T \Matrix{\Phi}_B  &\Matrix{M}_{\Boxvars{N}_1 \Boxvars{N}_1} \Matrix{\Phi}_{N_1}  &  \Matrix{M}_{\Boxvars{N}_1 \Boxvars{N}_2} \Matrix{\Phi}_{N_2} \\
    \Matrix{\widehat{M}}_{\Boxvars{N}_2 \Boxvar}  \Matrix{Q}_1\Matrix{Q}_1^T\Matrix{L}^T \Matrix{\Phi}_B + \Matrix{\widehat{M}}_{\Boxvars{N}_2 \Boxvar}  \Matrix{Q}_2 \Matrix{Q}_2^T\Matrix{L}^T \Matrix{\Phi}_B  & \Matrix{M}_{\Boxvars{N}_2 \Boxvars{N}_1} \Matrix{\Phi}_{N_1}   &  \Matrix{M}_{\Boxvars{N}_2 \Boxvars{N}_2} \Matrix{\Phi}_{N_2}
\end{pmatrix}
\notag \\
= &
\Matrix{B}
\begin{pmatrix}
    \Matrix{I} & & \Matrix{Q}_1^T \Matrix{\widehat{M}}_{\Boxvar N_1} & \Matrix{Q}_1^T  \Matrix{\widehat{M}}_{\Boxvar \Boxvars{N}_2} \\
        & \Matrix{I} & &  \\
    \Matrix{\widehat{M}}_{\Boxvars{N}_1 \Boxvar} \Matrix{Q}_1 &  &\Matrix{M}_{N_1 \Boxvars{N}_1} &  \Matrix{M}_{\Boxvars{N}_1 \Boxvars{N}_2} & \\
    \Matrix{\widehat{M}}_{\Boxvars{N}_2 \Boxvar}  \Matrix{Q}_1& & \Matrix{M}_{\Boxvars{N}_2 \Boxvars{N}_1}  &  \Matrix{M}_{\Boxvars{N}_2 \Boxvars{N}_2}
\end{pmatrix}
\Matrix{B}^T
\Matrix{\Phi}%
\notag
\end{align*}
Based on the equation above, we can drop the blocks $\Matrix{Q}_2^T \Matrix{\widehat{M}}_{\Boxvar \Boxvars{N}_2}$ and $\Matrix{Q}_2^T \Matrix{\widehat{M}}_{\Boxvar  \Boxvars{N}_2}$ (and their transposes) when approximating $\Matrix{M}$ as they do not affect its action on piecewise polynomial vectors. In other words, we approximate:
\begin{displaymath}
\Matrix{M} \approx
	\Matrix{B}
\begin{pmatrix}
    \Matrix{I} & & \Matrix{Q}_1^T \Matrix{\widehat{M}}_{\Boxvar N_1} & \Matrix{Q}_1^T  \Matrix{\widehat{M}}_{\Boxvar \Boxvars{N}_2} \\
        & \Matrix{I} &  \\
    \Matrix{\widehat{M}}_{\Boxvars{N}_1 \Boxvar} \Matrix{Q}_1 &  &\Matrix{M}_{N_1 \Boxvars{N}_1} &  \Matrix{M}_{\Boxvars{N}_1 \Boxvars{N}_2} \\
    \Matrix{\widehat{M}}_{\Boxvars{N}_2 \Boxvar}  \Matrix{Q}_1& & \Matrix{M}_{\Boxvars{N}_2 \Boxvars{N}_1}  &  \Matrix{M}_{\Boxvars{N}_2 \Boxvars{N}_2}
\end{pmatrix}
\Matrix{B}^T
:= 
\Matrix{B} \Matrix{M}_{(+)} \Matrix{B}^T
\end{displaymath}
A number of variables from $\Boxvar$ get eliminated from the system as they no longer interact with other variables in $\Matrix{M}_{(+)}$, which is strictly sparser than $\Matrix{M}$. The smaller the rank of $\Matrix{Q}_1,$ the larger the number of eliminated variables. We then perform a cheap update:
\begin{equation}\label{eq:phi-update} \Matrix{\Phi}_{B} \leftarrow \Matrix{Q}_1^T \Matrix{L}^T \Matrix{\Phi}_{B} \end{equation} and we can recurse on $\Matrix{M}_{(+)}$ to sparsify interactions of the subsequent node. After interactions of $(N_1,\Boxvars{N}_1)$ and $(N_2, \Boxvars{N}_2)$ have also been compressed, subsets of variables from each of $\Boxvar$, $\Boxvars{N}_1$, and $\Boxvars{N}_2$ are eliminated, and we have $\Matrix{M} \approx \Matrix{U} \Matrix{M}_{(++)}\Matrix{U}^T$ where $\Matrix{M}_{(++)}$ is much sparser than $\Matrix{M},$ and $\Matrix{U}$ is a sparse block diagonal matrix.

Going back to the general case, assuming that there are $g$ nodes interacting with $(B,\Boxvar),$ compressing its interactions would be written as $\Matrix{M} \approx \Matrix{\widetilde{M}},$ where:
\begin{equation}\label{eq:M'}
\Matrix{\widetilde{M}} := 
\Matrix{B}
\begin{pmatrix}
 \Matrix{I} & & \Matrix{Q}_1^T \Matrix{\widehat{M}}_{\mathcal{B}  \Boxvars{N}_1} & \Matrix{Q}_1^T  \Matrix{\widehat{M}}_{\mathcal{B}  \Boxvars{N}_2} & \cdots & \Matrix{Q}_1^T  \Matrix{\widehat{M}}_{\mathcal{B}  \Boxvars{N}_g}  \\
 		& \Matrix{I} &  &  \\
  \Matrix{\widehat{M}}_{\Boxvars{N}_1\Boxvar} \Matrix{Q}_1 &  &\Matrix{M}_{\mathcal{N}_1  \mathcal{N}_1} &  \Matrix{M}_{\mathcal{N}_1  \Boxvars{N}_2} & \cdots &  \Matrix{M}_{\mathcal{N}_1  \mathcal{N}_g} & \Matrix{M}_{\Boxvars{N}_1  \Boxvars{W}}  \\
\Matrix{\widehat{M}}_{\Boxvars{N}_2  \mathcal{B}}  \Matrix{Q}_1&  & \Matrix{M}_{\Boxvars{N}_2  \mathcal{N}_1}  &  \Matrix{M}_{\Boxvars{N}_2 \Boxvars{N}_2} & \cdots & \Matrix{M}_{\Boxvars{N}_2 \mathcal{N}_g} & \Matrix{M}_{\Boxvars{N}_2  \Boxvars{W}} \\
\vdots & & \vdots & \vdots & \ddots & \vdots & \vdots \\
\Matrix{\widehat{M}}_{\Boxvars{N}_g  \Boxvar} \Matrix{Q}_1 &  & \Matrix{M}_{\Boxvars{N}_g  \Boxvars{N}_1}  &  \Matrix{M}_{\Boxvars{N}_g  \Boxvars{N}_2} & \cdots & \Matrix{M}_{\Boxvars{N}_g  \Boxvars{N}_g} & \Matrix{M}_{\Boxvars{N}_g  \Boxvars{W}}\\
 &  & \Matrix{M}_{\Boxvars{W} \Boxvars{N}_1}  &  \Matrix{M}_{\Boxvars{W}  \Boxvars{N}_2} & \cdots & \Matrix{M}_{\Boxvars{W}  \Boxvars{N}_g} & \Matrix{M}_{\Boxvars{W}  \Boxvars{W}}
\end{pmatrix}
\Matrix{B}^T
\end{equation}
The space unaffected by compression is formally described in the following.
\begin{lemma}\label{thm:lemma}
For an SPD matrix $\Matrix{M}$, let $\Matrix{\widetilde{M}}$ be defined as in \autoref{eq:M'}. Then for any $B \in P(V),$ we have:
\begin{equation}\label{eq:lemma}
\Matrix{M} \Matrix{\Pi}_B = \Matrix{\widetilde{M}}  \Matrix{\Pi}_B
\end{equation}
where $\Matrix{\Pi}_B$ is the $|B| \times \pi$ matrix defined by:
\begin{equation}\label{def:PiB}
(\Matrix{\Pi}_B)_{ij}=
  \begin{cases}
   \Matrix{\Pi}_{ij} & \text{if } i \in B \\
   0 & \text{otherwise. }
  \end{cases}
\end{equation}
Moreover, $\Matrix{\widetilde{M}}$ is also SPD. 
\end{lemma}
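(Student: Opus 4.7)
The plan is to exploit the lengthy display immediately preceding \eqref{eq:M'}, which already computes the action of $\Matrix{M}$ on a block-diagonal matrix of piecewise polynomials, and to reduce the lemma to three cases for the arbitrary node (which I will call $B'$ to distinguish it from the compressed node $B$ used in the derivation of \eqref{eq:M'}): $B'=B$, $B'=N_i$ for some $i$, or $B'\subseteq\mathcal{W}$. The key algebraic fact is that $\Matrix{Q}_2^T\Matrix{N}=0$, since $\Matrix{Q}_2$ spans the orthogonal complement of the range of the \nmat{} $\Matrix{N}$; reading off the column blocks of $\Matrix{N}$ in \eqref{eq:qr-overview}, this gives $\Matrix{Q}_2^T\Matrix{L}^T\Matrix{\Phi}_B=0$ and $\Matrix{Q}_2^T\widehat{\Matrix{M}}_{\mathcal{B}\mathcal{N}_i}\Matrix{\Phi}_{N_i}=0$ for every $i$. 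Substituting these zeros into the last line of the display before \eqref{eq:M'} shows that the entries dropped in passing to $\widetilde{\Matrix{M}}$ never contribute to $\widetilde{\Matrix{M}}\Matrix{\Phi}$, so $\Matrix{M}\Matrix{\Phi}=\widetilde{\Matrix{M}}\Matrix{\Phi}$; reading off this identity one block-column at a time yields the lemma for $B'=B$ and for each $B'=N_i$. For $B'\subseteq\mathcal{W}$, both the congruence by $\Matrix{B}=\diag(\Matrix{L}\Matrix{Q},\Matrix{I})$ and the drops leave the $B'$-rows and $B'$-columns of $\Matrix{M}$ untouched (recall $\Matrix{M}_{\mathcal{B}\mathcal{W}}=0$ by hypothesis), so the identity is tautological there.

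For positive definiteness, symmetry of $\widetilde{\Matrix{M}}$ is immediate from the matched placement of the blocks $\Matrix{Q}_1^T\widehat{\Matrix{M}}_{\mathcal{B}\mathcal{N}_i}$ and their transposes in \eqref{eq:M'}. Since $\Matrix{L}$ is invertible (as a Cholesky factor of the SPD block $\Matrix{M}_{\mathcal{B}\mathcal{B}}$) and $\Matrix{Q}$ is orthogonal, the outer factor $\Matrix{B}$ is invertible, so it suffices to show that the middle matrix in \eqref{eq:M'} is SPD. After the drop, the second block row and column of that middle matrix (the $\Matrix{Q}_2$-directions of $\mathcal{B}$) contain only an identity on the diagonal, so an identity factor of size $|\mathcal{B}|-r$, with $r=\operatorname{rank}\Matrix{N}$, decouples from everything else. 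The remaining piece can be written as $\Matrix{R}^T\Matrix{M}_{\mathrm{sc}}\Matrix{R}$, where $\Matrix{M}_{\mathrm{sc}}$ is the middle factor in \eqref{eq:compr-scale} (SPD as a congruence of $\Matrix{M}$) and $\Matrix{R}=\diag(\Matrix{Q}_1,\Matrix{I},\ldots,\Matrix{I})$ has full column rank because $\Matrix{Q}_1$ does. Congruence by a full-rank factor preserves SPD, so both pieces of the middle matrix are SPD, and hence so is $\widetilde{\Matrix{M}}$.

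The main obstacle is careful bookkeeping: one must split the $\mathcal{B}$-block consistently into its $r$-dimensional $\Matrix{Q}_1$-part and its $(|\mathcal{B}|-r)$-dimensional $\Matrix{Q}_2$-part as in \eqref{eq:M'}, track which off-diagonal entries vanish after the drop (so that the middle matrix genuinely decouples into SPD pieces), and check that interactions with the outer nodes in $\mathcal{W}$ carry through the algebra unchanged. Once that is in place, everything collapses to the single fact that the column space of $\Matrix{N}$ equals the column space of $\Matrix{Q}_1$, equivalently $\Matrix{Q}_1\Matrix{Q}_1^T\Matrix{N}=\Matrix{N}$.
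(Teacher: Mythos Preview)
Your proof is correct and follows essentially the same approach as the paper. The paper's own proof treats \eqref{eq:lemma} as already established by the preceding display and addresses only the SPD claim, phrasing the nontrivial block of $\Matrix{M}_{(+)}$ as a principal submatrix of the SPD matrix $\Matrix{B}^{-1}\Matrix{M}\Matrix{B}^{-T}$ rather than as your $\Matrix{R}^T\Matrix{M}_{\mathrm{sc}}\Matrix{R}$, but these are equivalent observations.
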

\begin{proof}
It remains to prove the preservation of the SPD property. But
 $\Matrix{\widetilde{M}}$ is SPD iff $\Matrix{M}_{(+)}$ is SPD, where $\Matrix{\widetilde{M}} = \Matrix{B} \Matrix{M}_{(+)} \Matrix{B}^T$ as above. But $\Matrix{M}_{(+)}$, up to a permutation of variables, is composed of two non-interacting diagonal blocks of which one is the identity matrix and the other one is, again up to a permutation of variables, a principal submatrix of $\Matrix{B}^{-1}\Matrix{M}\Matrix{B}^{-T}$, which is SPD.
\end{proof}


Note a minor technical detail. In \Cref{thm:lemma}, we use $B$ instead of $\Boxvar$ (i.e., the preserved vectors correspond to all unknowns in $B$). Initially, one would have $B = \Boxvar$, i.e., when no variables have been eliminated, one puts $ \Matrix{\Phi}_B = \Poly(\Boxvar,:) = \Poly(B,:)$. However, later on, when variables get eliminated, the globally preserved vectors are still given by \autoref{def:PiB}, as long as the matrices $\Matrix{\Phi}_B$ are properly updated. 

Notice also that the scaling by $\Matrix{L}$ before compression \autoref{eq:compr-scale} plays an important role in the proof of the SPD property of $\Matrix{\widetilde{M}}$. In particular, the factorization will never fail (at least in exact precision). Similar observation has been made in \cite{cambier2019spand,chen2019robust,xia2017effective}.

\subsection{Using low-rank approximation}\label{sec:low-rank}
We would like to note here that our methods do not preclude the use of low-rank approximations in compression. Namely, provided that the matrix $\Matrix{M}_{\Boxvar \Boxvars{N}}$ is numerically low-rank, the matrix $\Matrix{Q}_1$ can be ensured to also satisfy
$\Matrix{\widehat{M}}_{\Boxvar \Boxvars{N}} = \Matrix{Q}_1\Matrix{Y}^T_1 + \Matrix{Q}_2 \Matrix{Y}^T_2$,
 where $\| \Matrix{Y}_2^T \|_2$ is small.
In that case, the error $\| \Matrix{M} - \Matrix{\widetilde{M}} \|_2 $ is controlled more directly. In the absence of geometrical information, for instance, the constant vector, i.e., $\Poly = \Poly^0$ can be used, combined with a rank-revealing decomposition, to ensure that $\| \Matrix{Y}_2^T \|_2$ above is small.

\section{ \algfull{} (\algo)}\label{sec:detailed-description}

Denote by $\Amat_k$ the approximation to $\Amat$ after $k$ levels of the algorithm outlined in \autoref{sec:idea}. Eliminating many nodes before any compressions occur, can guarantee that (for any $k$):
\begin{equation}\label{eq:equivalence}
	\Amat_{k} \Matrix{Y} = \Amat \Matrix{Y}
\end{equation}
where $\Matrix{Y}$ is a matrix with $\Theta(n)$ orthogonal columns. However, we would particularly like \autoref{eq:equivalence}
to also hold for $\Matrix{Y}$ whose columns span the eigenspace of $\Amat$ corresponding to $\Theta(n)$ smallest eigenvalues. Then we could expect $\Amat^{-1}_{k}$ to be a very close approximation of $\Amat^{-1}$ (see \autoref{sec:app-quality}).
However, we do not know the eigenvectors a priori, and we want $\Amat_k^{-1}$ to be as sparse as possible. This will motivate the algorithm, which we now describe in detail. 

We denote by $P_i(V)$ the partition at level $i$ of the algorithm, and by $N_i(V)$ the set of the corresponding nodes. The nodes need to be divided into two subsets: the set of \emph{interiors}, denoted by $I_i$ and the set of \emph{separators}, denoted by $S_i$.
An interior node can only interact with separator nodes; interior nodes will be eliminated by Gaussian elimination. However, the interactions of selected separators will also be compressed (which does not violate their separating properties). With each node $(B,\Boxvar)$, we associate a matrix $\Matrix{\Phi}_B,$ where in the first partition we put $\Matrix{\Phi}_B \leftarrow \Poly_B(B,:) = \Poly(B,:)$. We denote the total number of levels by $\ell$. We also put $\Amat_{(0)} := \Amat$.
 
 \subsection{Eliminating interior nodes}\label{sec:elim-step}
 We eliminate the interior nodes, if any, using the standard block Cholesky algorithm, which at the $i$-th level we denote by:
 \begin{equation}\label{eq:elimination}
 	 \Amat _{(i-1)} = \left( \prod_{j=1}^{|I_i|} \Matrix{G}_{(j)} \right)\Amat_{(i-\frac{1}{2})}   \left( \prod_{j = |I_i|}^{j=1} { \Matrix{G}_{(j)}^T} \right) =  \Matrix{G}_{i} \Amat_{(i-\frac{1}{2})} \Matrix{G}_{i}^T \end{equation}
 
\subsection{Compressing selected separators}\label{sec:compr-step}
To sparsify $\Amat_{(i-\frac{1}{2})}$ while preserving some form of \autoref{eq:equivalence}, we apply the polynomial compression (\autoref{sec:compress-details}) for each selected node, which we denote by:

\begin{equation}\label{eq:compression}
\Amat_{(i-\frac{1}{2})} \approx \left( \prod_{j = 1}^{|S_i|} \Matrix{B}_{(i)} \right)  \Amat_{(i)}  \left( \prod_{j = |S_i|}^{1} \Matrix{B}_{(i)}^T \right) =  \Matrix{B}_{i} \Amat_{(i)} \Matrix{B}_{i}^T  
\end{equation}
We now describe this step and explain its correctness. From \autoref{sec:compress-details} we know that compressing interactions of a single node $(B_0,\Boxvar_0)$ with $g$ interacting nodes, leads to \autoref{eq:equivalence} for $k = i$, with $\Matrix{Y} = \Matrix{\Phi}$, which is our space approximating the near-kernel eigenspace:
\begin{equation}\label{def:Y-diag}
\Matrix{\Phi} = 
\begin{pmatrix}
\Matrix{\Phi}_{B_0} & & & \\
& \Matrix{\Phi}_{B_1} & & \\
& & \ddots & \\
& & & \Matrix{\Phi}_{B_g}
\end{pmatrix}
\end{equation}
The filtered interaction matrix from \autoref{eq:qr-overview} whose QR we need to compute, reads:
\begin{equation}\label{eq:N-definition}
\Matrix{N} =
\begin{pmatrix}
	\Matrix{L}^T \Matrix{\Phi}_{B_0} &
	\Matrix{\widehat{A}}_{\Boxvar_0 \Boxvar_1} \Matrix{\Phi}_{B_1} & \Matrix{\widehat{A}}_{\Boxvar_0 \Boxvar_2} \Matrix{\Phi}_{B_2} & \cdots & \Matrix{\widehat{A}}_{\Boxvar_0 \Boxvar_g} \Matrix{\Phi}_{B_g}
\end{pmatrix}
\end{equation}
This means that compression is practical only if the blocks $\Matrix{\Phi}_B$ have small numbers of columns. Otherwise no or very few interactions can be compressed.
After compressing interactions of $(B_0,\Boxvar_0)$, we need to update (as in \autoref{eq:phi-update}):
\begin{equation}\label{eq:Y-update} \Matrix{\Phi}_{B_0} \leftarrow \Matrix{Q}_1^T \Matrix{L}^T \Matrix{\Phi}_{B_0} \end{equation} 
The update \autoref{eq:Y-update} does not change the block structure of $\Matrix{\Phi}$, and so compressions in the given level can continue in the same manner.

Notice, however, that when eliminating interior nodes, it seems we also need to update $\Matrix{\Phi},$ since then:
\begin{displaymath}
\Amat _{(i-1)} \Matrix{\Phi} = \Matrix{G}_{i} \Amat_{(i - \frac{1}{2})} \Matrix{G}_{i}^T \Matrix{\Phi}
\end{displaymath}
The update $\Matrix{\Phi} \leftarrow \Matrix{G}_{i}^T\Matrix{\Phi}$ would ruin the block-diagonal-like structure of \autoref{def:Y-diag}. Fortunately, we do not need to perform this update at all because it only changes the rows of $\Matrix{\Phi}$ that correspond to variables just eliminated by block Cholesky.

\subsection{Constructing the new partition} After compressions in level $i$ have been completed, we need to define the new partition $P_{i+1}(V) \succ P_{i}(V)$ (see \cref{def:partition-coarsening}). Every $B \in P_{i}(V)$ must have a unique ``father'' $D \in P_{i+1}(V)$ such that $B \subseteq D$. 

\subsection{Forming the new $\Matrix{\Phi}$ matrix}\label{sec:newphi}

With $\Matrix{\Phi}$ defined as in \autoref{def:Y-diag} (with blocks defined by partition in the first level), when coarser partitions are formed, the numbers of columns of nonzero sub-blocks of $\Matrix{\Phi}$ corresponding to the new partition sets will also grow, and eventually the compression will become impractical. On the other hand, we can keep modifying $\Matrix{\Phi}$ so that for each set $D$ in the $k$-th partition, the matrix $\Matrix{Y}_D$ spanning the space of discretized polynomials over $D$ (of a pre-chosen degree equal for each level), satisfies \autoref{eq:equivalence}. Notice that if this is in fact true for level $k$, then it is also true at the beginning of level $k+1,$ because of the nature of polynomials, and the fact that each set $D$ at level $k+1$ is a union of sets from level $k$. 

However, we still need to make sure that the appropriate blocks in \autoref{def:Y-diag} correspond to polynomial bases of the sets at the new level. When a set $D$ is formed by merging the sets $B_1,B_2, \ldots, B_c$ in the algorithm, then its active subset $\Boxvars{D} =  \Boxvar_1 \cup \Boxvar_2 \cup \cdots \cup \Boxvar_c $, where $\Boxvar_j$ is the active subset of $\Boxvars{B}_j$. Again in light of the nature of updates \autoref{eq:Y-update}, this means that we can simply update:
\begin{equation} 
    \Matrix{\Phi}_{D} \leftarrow 
    \begin{pmatrix} \Matrix{\Phi}_{B_1}\\
    \Matrix{\Phi}_{B_2} \\ \vdots \\ \Matrix{\Phi}_{B_c} \end{pmatrix}
\end{equation} 
By definition, the number of columns of matrix $\Matrix{\Phi}_D$, for any set $D$ at any level, is bounded at all times. This is crucial for achieving $\OO{n}$ or $\OO{n \log{n}}$ complexity of factorization, and $\OO{n}$ complexity of applying $\Amat_{\ell}^{-1}$ (see \autoref{sec:solver-family}). We have:
\begin{equation}
\label{eq:PhiD} 
\Matrix{\Phi}_D = \Matrix{\tilde{\Pi}}_D (\Boxvars{D},:)
\end{equation}
where $ \Matrix{\tilde{\Pi}}_D = \Matrix{B}_i^T \Matrix{G}_i^T \cdots \Matrix{B}_2^T \Matrix{G}_2^T \Matrix{B}_1^T \Matrix{G}_1^T \Matrix{\Pi}_D$, for $\Matrix{\Pi}_D$ as in \autoref{def:PiB}. This means that $\Matrix{\Phi}_D$ is exactly the matrix needed in the compression at level $i+1$ of the algorithm, that is, it corresponds to the space of discretized polynomials over $D$.

\subsection{Eliminating the last node} \label{sec:factor-top} If the new partition is all of $V$, i.e., $P_{i}(V) = V$, we have reached the top level of the algorithm, i.e., $i = \ell$. Eliminating $(V,\Boxvars{V}) \in N_{\ell}(V)$ via exact Cholesky decomposition concludes the factorization.

\subsection{Pseudocode of the factorization}
The hierarchical factorization is summarized in the pseudocode below. 
\begin{algorithm}[H]
\caption{\algfull}
\label{alg:hierarchical}
\begin{algorithmic}
\REQUIRE{$P_1(V)$, $I_1$, $S_1$, $\Poly$}
\FORALL{$(B,\Boxvar) \in N_1(V)$} \{Define the polynomial bases of the nodes\}
\STATE{$\Matrix{\Phi_B} \leftarrow \Poly(\Boxvar,:)$} (see \autoref{sec:polynomial-basis})
\ENDFOR
\STATE {$i \leftarrow 1$}
\WHILE{$| P_i(V) | > 1$}
\FORALL {$(B,\Boxvar) \in I_i$} \{Eliminate interior nodes\}
\STATE{Eliminate $(B,\Boxvar)$} (\autoref{sec:elim-step})
\ENDFOR
\FORALL {$(B,\Boxvar) \in S_i$} \{Compress interactions of selected separators\}
\STATE{Compress interactions of $(B,\Boxvar)$} (\autoref{sec:compr-step})
\ENDFOR
\STATE{Construct coarser partition $P_{i+1}(V) \succ P_{i}(V)$} 
\STATE{Construct new $\Matrix{\Phi}$ matrices} (\autoref{sec:newphi})
\STATE{Choose interiors and separators $I_{i+1},S_{i+1}$} 
\STATE{$i \leftarrow i + 1$}
\ENDWHILE
\STATE{$\ell \leftarrow i $}
\STATE{Eliminate $(V,\Boxvars{V})$} (\autoref{sec:factor-top})
\RETURN { 
$\Amat_{\ell} =\Matrix{G}_1 \Matrix{B}_1
\Matrix{G}_2 \Matrix{B}_2 \cdots \Matrix{G}_{\ell - 1} \Matrix{B}_{\ell -1} \Matrix{G}_{\ell}
\Matrix{G}_{\ell}^T \Matrix{B}^T_{\ell -1} \Matrix{G}^T_{\ell -1} \cdots \Matrix{B}^T_2
\Matrix{G}^T_2 \Matrix{B}^T_1\Matrix{G}^T_1$ } 
\end{algorithmic}
\end{algorithm}

\subsection{Applying the preconditioner}
The approximate inverse operator reads:
\begin{equation*}
\begin{aligned}
\Amat^{-1} \approx \Amat_{\ell}^{-1} & = \\
 \Matrix{G}_{1}^{-T} \Matrix{B}_{1}^{-T} \Matrix{G}_{2}^{-T} \Matrix{B}_{3}^{-T}
 \cdots \Matrix{G}_{\ell-1}^{-T} \Matrix{B}_{\ell-1}^{-T}
   \Matrix{G}_{\ell}^{-T} & \Matrix{G}_{\ell}^{-1} \Matrix{B}_{\ell-1}^{-1} 
   \Matrix{G}_{\ell-1}^{-1} \cdots \Matrix{B}_{2}^{-1}  \Matrix{G}_{2}^{-1} \Matrix{B}_{1}^{-1} 
   \Matrix{G}_{1}^{-1}
\end{aligned}
\end{equation*}
From \autoref{eq:elimination} and \autoref{eq:compression} it should be clear that $\Amat_{\ell}^{-1}$ can be applied by a sequence of interspersed block triangular solves and multiplications by block-diagonal matrices.

\section{Family of preconditioners}\label{sec:solver-family} 
In this section, we propose a family of preconditioners composed of four realizations of the \algo{} algorithm.  To illustrate the ideas, we consider a cartesian grid with a relatively simple discretization scheme.
We would like to stress, however, that our methods are not limited to cartesian grids. There is no assumption about the grid structure in \autoref{sec:detailed-description}; the algorithm only requires a partition hierarchy and the $\Poly$ matrix. The appropriate partitions can be defined geometrically given a general grid, or purely using the matrix graph.
In fact, our methods are being incorporated into spaND \cite{cambier2019spand} which uses METIS \cite{karypis1998fast} to obtain appropriate graph partitionings.
To keep focus, however, in this paper we limit ourselves to cartesian grids where partitions can be explicitly defined


We therefore consider the case in which the discretized domain is of the form $ \Omega = (0,a_1) \times (0,a_2) \times (0,a_3)$ and the grid vertices are located at points $(\frac{kh_1}{2},\frac{lh_2}{2},\frac{mh_3}{2})$ where $h_1,h_2,h_3 > 0$ are small and fixed, and $k,l,m$ are natural numbers. We assume that the discretization is such that the matrix entry $\Amat_{ij}$ can be nonzero only if the grid vertex corresponding to the $j$-th unknown is in the $3 \times 3 \times 3$ subgrid of vertices around the vertex corresponding to the $i$-th unknown.

\subsection{Preconditioners using nested dissection partitions}
We consider the partitions $P_1(V) \prec P_2(V) \prec \cdots \prec P_{\ell}(V)$ inspired by nested dissection \cite{george1973nested,schmitz2014fast}. We choose a natural number $b > 1$. The sets in $P_t(V)$ are formed when the domain $\Omega$ is cut by the planes:
 	$\{x = (rd - 1)h_1 \},$ $\{ x = rdh_1 \},$ $\{ y = (sd - 1)h_2 \},$ $\{ y = sdh_2 \},$ $\{ z = (ud - 1)h_3 \},$ $\{ z = udh_3 \}$
for $r,s,u \in \{1,2,3, \ldots \}$, where $d = 2^{t-1} \, b$. Two variables $i,j \in V$ then belong to the same partition set iff their corresponding grid vertices are in the same set formed by the cutting planes. This is illustrated in \autoref{fig:part-nd}. 

\input{partitions-nd.tex}

Notice that there are three types of sets formed by the cutting planes, which we call 0-, 1-, and 2-cells. The 0-cell is a cube and contains a single grid vertex (the 0-cells touch the corners of eight large cubes). The 1-cell is typically a longitudinal parallelpiped containing $d-1$ grid vertices (1-cells touch the edges of four large cubes). The 2-cell is typically a flat parallelpiped containing $(d-1)^2$ grid vertices (the 2-cells touch the faces of two cubes). Finally, a 3-cell is typically a cube containing $(d-1)^3$ grid vertices (these are the large cubes in \autoref{fig:part-nd}). The 0- 1- and 2-cells naturally create a ``buffer'' between the 3-cells and the rest of the domain. In a 2D case (for example when the domain is $(0,1)^2$) there would be no 3-cells. In this case the 0- and 1-cells naturally separate the 2-dimensional cells from the rest of the domain. See \autoref{fig:illustration} for an illustration with active unknowns visible.

The set of 3-cells will form interiors and will be eliminated using the block Cholesky algorithm (\autoref{sec:elim-step}). Compression of the remaining cells then defines the preconditioner (here, we do not distinguish between a cell and a partition node):

\medskip

\begin{enumerate}[align=left]\setcounter{enumi}{-1}
	\item[\bf \schemeIa] All interactions of all cells are compressed
	\item[\bf \schemeI] All interactions of 2-cells are compressed but others are not
    \item[\bf \schemeIb] Only the interactions of 2-cells with non-adjacent cells are compressed
\end{enumerate} 

\medskip

Compressing only selected interactions in~\schemeIb{}  above is achieved by including in $\Matrix{Q}_1$ the span of the off-diagonal blocks connecting the given 2-cell with its adjacent 0- and 1-cells (so that most or all of the compressed interactions are those between 2-cells). 
Most other hierarchical algorithms similar to~\schemeIb{}---which compress only well-separated interactions---cannot guarantee the SPD property, and may break (for example LoRaSp \cite{pouransari2017fast}). In~\schemeIb{}, the compression ranks (numbers of columns of $\Matrix{Q}_1$) increase with each level (in contrast, for example, to \cite{pouransari2017fast,sushnikova2018compress} where the ranks can be bounded) but the SPD property is preserved.

\schemeI{} is analogous to the schemes used in the context of Hierarchical Interpolative Factorization and Sparsified Nested Dissection \cite{cambier2019spand,ho2016hierarchical} (the latter defines the partitions based on the locations of unknowns, or purely based on the matrix entries). These approaches base the compression step, however, on low-rank approximations, not polynomial compression (\cite{ho2016hierarchical} being also quite different algebraically).

\schemeIa{}, on the other hand, is quite different in that it also compresses interactions of 1-cells (compressing interactions of 0-cells does not make any difference and can be skipped). The off-diagonal block corresponding to a 1-cell in the matrix should not be expected to be low-rank. However, polynomial compression does not require the low-rank property. Intuitively speaking, it replaces each interaction between two nodes by a coarser one, preserving the action of the matrix on a chosen subspace (as opposed to exploiting the low-rank structure to represent the interaction accurately).

\subsection{Preconditioners using general partitions}\label{sec:simple-partitioning-family} The second type of domain partitions considered does not distinguish interior nodes that could be eliminated without introducing large fill-ins. Again, such partitions can be obtained by analyzing just the matrix entries \cite{pouransari2017fast}, using tools such as SCOTCH \cite{pellegrini1996scotch}. As an example, with the same setup as before, $P_t(V)$ could be a partition formed when the domain $\Omega$ is cut into cells containing the same, or roughly the same number of grid vertices:
\begin{displaymath} 
    D_{rsu} =  \left( (rdh_1, (r+1)dh_1) \times (sdh_2, (s+1)dh_2) 
    \times (udh_3, (u+1)dh_3) \right)
\end{displaymath}
This is shown in \autoref{fig:part-general}.
\begin{figure}[htbp]
\centering
\subfloat{
\begin{tikzpicture}
    \def \b{0.0}
    \def \a{0.6}
    \def \c{0.15}
    \def \x{-4*\c}
    \def \y{4*\c}
    \pgfmathsetmacro{\cubex}{\a}
    \pgfmathsetmacro{\cubey}{\a}
    \pgfmathsetmacro{\cubez}{\a}
    \draw[thick] (\x,-\b,0) coordinate (a) -- ++(-\cubex,0,0) coordinate (b);
    \draw[thick](b) -- ++(0,-\cubey,0) coordinate (c);
    \draw[thick](c) -- ++(\cubex,0,0) coordinate (d);
    \draw[thick](d) -- (a);
    \draw[thick](a) -- ++(0,0,-\cubez) coordinate (e);
    \draw[thick](b) -- ++(0,0,-\cubez) coordinate (f);
    \draw[draw=none] (c) -- ++(0,0,-\cubez) coordinate (g);
    \draw[draw=none] (d) -- ++(0,0,-\cubez) coordinate (h);
    \draw[thick](e) -- (f);
    \draw[DashedDraw] (c) -- (g) -- (f);
    \draw[DashedDraw] (g) -- (h);
    %
    \def \x{2*\b}
    \def \y{4*\b}
    \pgfmathsetmacro{\cubex}{\a}
    \pgfmathsetmacro{\cubey}{\a}
    \pgfmathsetmacro{\cubez}{\a}
    \draw[thick](\x,-\b,0) coordinate (a) -- ++(-\cubex,0,0) coordinate (b);
    \draw[thick](b) -- ++(0,-\cubey,0) coordinate (c);
    \draw[thick](c) -- ++(\cubex,0,0) coordinate (d);
    \draw[thick](d) -- (a);
    \draw[thick](a) -- ++(0,0,-\cubez) coordinate (e);
    \draw[draw=none] (b) -- ++(0,0,-\cubez) coordinate (f);
    \draw[draw=none] (c) -- ++(0,0,-\cubez) coordinate (g);
    \draw[draw=none] (d) -- ++(0,0,-\cubez) coordinate (h);
    \draw[thick](e) -- (f);
    \draw[DashedDraw] (c) -- (g) -- (f);
    \draw[DashedDraw] (g) -- (h);     
    \def \x{2*\b+\a}
    \def \y{4*\c + \a}
    \pgfmathsetmacro{\cubex}{\a}
    \pgfmathsetmacro{\cubey}{\a}
    \pgfmathsetmacro{\cubez}{\a}
    \draw[thick](\x,-\b,0) coordinate (a) -- ++(-\cubex,0,0) coordinate (b);
    \draw[draw=none] (b) -- ++(0,-\cubey,0) coordinate (c);
    \draw[thick](c) -- ++(\cubex,0,0) coordinate (d);
    \draw[thick](d) -- (a);
    \draw[thick](a) -- ++(0,0,-\cubez) coordinate (e);
    \draw[draw=none] (b) -- ++(0,0,-\cubez) coordinate (f);
    \draw[draw=none] (c) -- ++(0,0,-\cubez) coordinate (g);
    \draw[draw=none] (d) -- ++(0,0,-\cubez) coordinate (h);
    \draw[thick](e) -- (f);
    \draw[DashedDraw] (c) -- (g) -- (f);
    \draw[DashedDraw] (g) -- (h);    
    \def \x{2*\b + 2*\a}
    \def \y{4*\c + \a}
    \pgfmathsetmacro{\cubex}{\a}
    \pgfmathsetmacro{\cubey}{\a}
    \pgfmathsetmacro{\cubez}{\a}
    \draw[thick](\x,-\b,0) coordinate (a) -- ++(-\cubex,0,0) coordinate (b);
    \draw[draw=none] (b) -- ++(0,-\cubey,0) coordinate (c);
    \draw[thick](c) -- ++(\cubex,0,0) coordinate (d);
    \draw[thick](d) -- (a);
    \draw[thick](a) -- ++(0,0,-\cubez) coordinate (e);
    \draw[draw=none] (b) -- ++(0,0,-\cubez) coordinate (f);
    \draw[draw=none] (c) -- ++(0,0,-\cubez) coordinate (g);
    \draw[thick](d) -- ++(0,0,-\cubez) coordinate (h);
    \draw[thick](e) -- (f);
    \draw[thick](h) -- (e);
    \draw[DashedDraw] (c) -- (g) -- (f);
    \draw[DashedDraw] (g) -- (h);    
\foreach \j in {1,2,3} {
	\foreach \k in {0,1} {
	    \def \x{2*\b}
	    \def \y{-4*\c-\b}
	    \def \dx{4*\c + \b}
	    \pgfmathsetmacro{\cubex}{\a}
	    \pgfmathsetmacro{\cubey}{\a}
	    \pgfmathsetmacro{\cubez}{\a}
	    \draw[thick](\x-\k*\dx,\j*\y-\b,0) coordinate (a) -- ++(-\cubex,0,0) coordinate (b);
	    \draw[thick](b) -- ++(0,-\cubey,0) coordinate (c);
	    \draw[thick](c) -- ++(\cubex,0,0) coordinate (d);
	    \draw[thick](d) -- (a);
	    \draw[draw=none] (a) -- ++(0,0,-\cubez) coordinate (e);
	    \draw[draw=none] (b) -- ++(0,0,-\cubez) coordinate (f);
	    \draw[draw=none] (c) -- ++(0,0,-\cubez) coordinate (g);
	    \draw[draw=none] (d) -- ++(0,0,-\cubez) coordinate (h);
	    \draw[draw=none] (e) -- (f) -- (g) -- (h) -- cycle;
	    \draw[DashedDraw] (c) -- (g) -- (f);
            \draw[DashedDraw] (g) -- (h);    	    	    	    
	\foreach \i in {1,2,3} {
	    \def \x{2*\b}
	    \def \y{-4*\c-\b}
	    \def \z{-4*\c-\b}
	    \pgfmathsetmacro{\cubex}{\a}
	    \pgfmathsetmacro{\cubey}{\a}
	    \pgfmathsetmacro{\cubez}{\a}
	    \draw[draw=none] (\x-\k*\dx,\j*\y-\b,\i*\z) coordinate (a) -- ++(-\cubex,0,0) coordinate (b);
	    \draw[draw=none] (b) -- ++(0,-\cubey,0) coordinate (c);
	    \draw[draw=none](c) -- ++(\cubex,0,0) coordinate (d);
	    \draw[draw=none] (a) -- ++(0,0,-\cubez) coordinate (e);
	    \draw[draw=none] (b) -- ++(0,0,-\cubez) coordinate (f);
	    \draw[draw=none] (c) -- ++(0,0,-\cubez) coordinate (g);
	    \draw[draw=none] (d) -- ++(0,0,-\cubez) coordinate (h);
	    \draw[DashedDraw] (c) -- (g) -- (f);
            \draw[DashedDraw] (g) -- (h);     	    	       	      	      	        	        	        	    	
	    \def \x{2*\b}
	    \def \y{8*\c}
	    \def \z{-4*\c-\b}
            \pgfmathsetmacro{\cubex}{\a}
	    \pgfmathsetmacro{\cubey}{\a}
	    \pgfmathsetmacro{\cubez}{\a}
	    \draw[thick](\x-\k*\dx,-\b,\i*\z) coordinate (a) -- ++(-\cubex,0,0) coordinate (b);
	    \draw[draw=none] (b) -- ++(0,-\cubey,0) coordinate (c);
	    \draw[draw=none] (c) -- ++(\cubex,0,0) coordinate (d);
	    \draw[thick](a) -- ++(0,0,-\cubez) coordinate (e);
	    \draw[thick](b) -- ++(0,0,-\cubez) coordinate (f);
	    \draw[draw=none] (c) -- ++(0,0,-\cubez) coordinate (g);
	    \draw[draw=none] (d) -- ++(0,0,-\cubez) coordinate (h);
	    \draw[thick](e) -- (f);
	    \draw[DashedDraw] (c) -- (g) -- (f);
            \draw[DashedDraw] (g) -- (h);         
	    \def \x{2*\b+\a}
	    \def \y{8*\c}
	    \def \z{-4*\c-\b}
	    \pgfmathsetmacro{\cubex}{\a}
	    \pgfmathsetmacro{\cubey}{\a}
	    \pgfmathsetmacro{\cubez}{\a}
	    \draw[thick](\x,-\b,\i*\z) coordinate (a) -- ++(-\cubex,0,0) coordinate (b);
	    \draw[draw=none] (b) -- ++(0,-\cubey,0) coordinate (c);
	    \draw[draw=none] (c) -- ++(\cubex,0,0) coordinate (d);
	    \draw[draw=none] (d) -- (a);
	    \draw[draw=none] (a) -- ++(0,0,-\cubez) coordinate (e);
	    \draw[draw=none] (b) -- ++(0,0,-\cubez) coordinate (f);
	    \draw[draw=none] (c) -- ++(0,0,-\cubez) coordinate (g);
	    \draw[draw=none] (d) -- ++(0,0,-\cubez) coordinate (h);
	    \draw[thick](e) -- (f);
	    \draw[DashedDraw] (c) -- (g) -- (f);
            \draw[DashedDraw] (g) -- (h); 	
	    } 
	    }
	    }     
\foreach \j in {1,2,3} {
	    \def \x{2*\b+\a}
	    \def \y{-4*\c-\b}
	    \pgfmathsetmacro{\cubex}{\a}
	    \pgfmathsetmacro{\cubey}{\a}
	    \pgfmathsetmacro{\cubez}{\a}
	    \draw[thick](\x,\j*\y-\b,0) coordinate (a) -- ++(-\cubex,0,0) coordinate (b);
	    \draw[thick](b) -- ++(0,-\cubey,0) coordinate (c);
	    \draw[thick](c) -- ++(\cubex,0,0) coordinate (d);
	    \draw[thick](d) -- (a);
	    \draw[draw=none] (a) -- ++(0,0,-\cubez) coordinate (e);
	    \draw[draw=none] (b) -- ++(0,0,-\cubez) coordinate (f);
	    \draw[draw=none] (c) -- ++(0,0,-\cubez) coordinate (g);
	    \draw[draw=none] (d) -- ++(0,0,-\cubez) coordinate (h);
	    \draw[DashedDraw] (c) -- (g) -- (f);
            \draw[DashedDraw] (g) -- (h); 
	 \foreach \i in {1,2,3} {
	    \def \x{2*\b+\a}
	    \def \y{-4*\c-\b}
	    \def \z{-4*\c-\b}
	    \pgfmathsetmacro{\cubex}{\a}
	    \pgfmathsetmacro{\cubey}{\a}
	    \pgfmathsetmacro{\cubez}{\a}
	    \draw[draw=none] (\x,\j*\y-\b,\i*\z) coordinate (a) -- ++(-\cubex,0,0) coordinate (b);
	    \draw[draw=none] (b) -- ++(0,-\cubey,0) coordinate (c);
	    \draw[draw=none]  (c) -- ++(\cubex,0,0) coordinate (d);
	    \draw[draw=none]  (a) -- ++(0,0,-\cubez) coordinate (e);
	    \draw[draw=none] (b) -- ++(0,0,-\cubez) coordinate (f);
	    \draw[draw=none]  (c) -- ++(0,0,-\cubez) coordinate (g);
	    \draw[draw=none]  (d) -- ++(0,0,-\cubez) coordinate (h);
	    \draw[DashedDraw] (c) -- (g) -- (f);
            \draw[DashedDraw] (g) -- (h);  
	    \def \x{2*\b+2*\a}
	    \def \y{-4*\c-\b}
	    \def \z{-4*\c-\b}
	    \pgfmathsetmacro{\cubex}{\a}
	    \pgfmathsetmacro{\cubey}{\a}
	    \pgfmathsetmacro{\cubez}{\a}
	    \draw[draw=none] (\x,\j*\y-\b,\i*\z) coordinate (a) -- ++(-\cubex,0,0) coordinate (b);
	    \draw[draw=none]  (b) -- ++(0,-\cubey,0) coordinate (c);
	    \draw[draw=none]  (c) -- ++(\cubex,0,0) coordinate (d);
	    \draw[thick](d) -- (a);
	    \draw[thick](a) -- ++(0,0,-\cubez) coordinate (e);
	    \draw[draw=none]  (b) -- ++(0,0,-\cubez) coordinate (f);
	    \draw[draw=none]  (c) -- ++(0,0,-\cubez) coordinate (g);
	    \draw[thick](d) -- ++(0,0,-\cubez) coordinate (h);
	    \draw[thick](e) -- (h);
	    \draw[DashedDraw] (c) -- (g) -- (f);
            \draw[DashedDraw] (g) -- (h);	
	    \def \x{2*\b+2*\a}
	    \def \y{8*\c}
	    \def \z{-4*\c-\b}
	    \pgfmathsetmacro{\cubex}{\a}
	    \pgfmathsetmacro{\cubey}{\a}
	    \pgfmathsetmacro{\cubez}{\a}
	    \draw[thick](\x,-\b,\i*\z) coordinate (a) -- ++(-\cubex,0,0) coordinate (b);
	    \draw[draw=none]  (b) -- ++(0,-\cubey,0) coordinate (c);
	    \draw[draw=none]  (c) -- ++(\cubex,0,0) coordinate (d);
	    \draw[draw=none] (d) -- (a);
	    \draw[thick](a) -- ++(0,0,-\cubez) coordinate (e);
	    \draw[thick]  (b) -- ++(0,0,-\cubez) coordinate (f);
	    \draw[draw=none]  (c) -- ++(0,0,-\cubez) coordinate (g);
	    \draw[thick](d) -- ++(0,0,-\cubez) coordinate (h);
	    \draw[draw=none]  (e) -- (f) -- (g) -- (h) -- cycle;
	    \draw[thick](e) -- (h);
	    \draw[thick](e) -- (f);
	    \draw[DashedDraw] (c) -- (g) -- (f);
            \draw[DashedDraw] (g) -- (h);	    
	    }
	   \def \x{2*\b+2*\a}
	   \def \y{-4*\c-\b}
	    \pgfmathsetmacro{\cubex}{\a}
	    \pgfmathsetmacro{\cubey}{\a}
	    \pgfmathsetmacro{\cubez}{\a}
	    \draw[thick](\x,\j*\y-\b,0) coordinate (a) -- ++(-\cubex,0,0) coordinate (b);
	    \draw[thick](b) -- ++(0,-\cubey,0) coordinate (c);
	    \draw[thick](c) -- ++(\cubex,0,0) coordinate (d);
	    \draw[thick](d) -- (a);
	    \draw[thick](a) -- ++(0,0,-\cubez) coordinate (e);
	    \draw[draw=none]  (b) -- ++(0,0,-\cubez) coordinate (f);
	    \draw[draw=none]  (c) -- ++(0,0,-\cubez) coordinate (g);
	    \draw[thick](d) -- ++(0,0,-\cubez) coordinate (h);
	    \draw[thick](e) -- (h);
	    \draw[draw=none]  (e)--(f)--(g)--(h)-- cycle;
	    \draw[DashedDraw] (c) -- (g) -- (f);
            \draw[DashedDraw] (g) -- (h);
   	    }
\end{tikzpicture}}
\subfloat{
\begin{tikzpicture}
    \def \b{0.13333}
    \def \c{0.13333}
    \def \a{9*\c}
    \def \x{12*\c}
    \pgfmathsetmacro{\cubex}{\a}
    \pgfmathsetmacro{\cubey}{\a}
    \pgfmathsetmacro{\cubez}{\a}
    \draw[thick](\x,-\c,0) coordinate (a) -- ++(-\cubex,0,0) coordinate (b);
    \draw[thick] (b) -- ++(0,-\cubey,0) coordinate (c);
    \draw[thick](c) -- ++(\cubex,0,0) coordinate (d);
    \draw[thick](d) -- (a);
    \draw[thick](a) -- ++(0,0,-\cubez) coordinate (e);
    \draw[thick] (b) -- ++(0,0,-\cubez) coordinate (f);
    \draw[draw=none] (c) -- ++(0,0,-\cubez) coordinate (g);
    \draw[draw=none] (d) -- ++(0,0,-\cubez) coordinate (h);
    \draw[thick](e) -- (f);
    \draw[DashedDraw] (c) -- (g) -- (f);
    \draw[DashedDraw] (g) -- (h);     
    \def \x{21*\c}
    \pgfmathsetmacro{\cubex}{\a}
    \pgfmathsetmacro{\cubey}{\a}
    \pgfmathsetmacro{\cubez}{\a}
    \draw[thick](\x,-\c,0) coordinate (a) -- ++(-\cubex,0,0) coordinate (b);
    \draw[draw=none] (b) -- ++(0,-\cubey,0) coordinate (c);
    \draw[thick](c) -- ++(\cubex,0,0) coordinate (d);
    \draw[thick](d) -- (a);
    \draw[thick](a) -- ++(0,0,-\cubez) coordinate (e);
    \draw[draw=none] (b) -- ++(0,0,-\cubez) coordinate (f);
    \draw[draw=none] (c) -- ++(0,0,-\cubez) coordinate (g);
    \draw[thick](d) -- ++(0,0,-\cubez) coordinate (h);
    \draw[thick](e) -- (f);
    \draw[thick](h) -- (e);
    \draw[DashedDraw] (c) -- (g) -- (f);
    \draw[DashedDraw] (g) -- (h);    
\foreach \j in {1,2,3} {
	\foreach \k in {0,1} {
	    \def \dx{10*\c}
	\foreach \i in {1} {
	    \def \x{10*\c + 2*\b}
	    \def \z{-9*\c}
	    \pgfmathsetmacro{\cubex}{\a}
	    \pgfmathsetmacro{\cubey}{\a}
	    \pgfmathsetmacro{\cubez}{\a}
	    \draw[thick](\x,-\b,\i*\z) coordinate (a) -- ++(-\cubex,0,0) coordinate (b);
	    \draw[draw=none] (b) -- ++(0,-\cubey,0) coordinate (c);
	    \draw[draw=none] (c) -- ++(\cubex,0,0) coordinate (d);
	    \draw[draw=none] (d) -- (a);
	    \draw[draw=none] (a) -- ++(0,0,-\cubez) coordinate (e);
	    \draw[thick] (b) -- ++(0,0,-\cubez) coordinate (f);
	    \draw[draw=none] (c) -- ++(0,0,-\cubez) coordinate (g);
	    \draw[draw=none] (d) -- ++(0,0,-\cubez) coordinate (h);
	    \draw[thick](e) -- (f);
	    \draw[DashedDraw] (c) -- (g) -- (f);
            \draw[DashedDraw] (g) -- (h);       	    	
	    } 
	    }
	    }     
\foreach \j in {1} {
	    \def \x{11*\c+\b}
	    \def \y{-10*\c}
	    \pgfmathsetmacro{\cubex}{\a}
	    \pgfmathsetmacro{\cubey}{\a}
	    \pgfmathsetmacro{\cubez}{\a}
	    \draw[thick](\x,\j*\y,0) coordinate (a) -- ++(-\cubex,0,0) coordinate (b);
	    \draw[thick](b) -- ++(0,-\cubey,0) coordinate (c);
	    \draw[thick](c) -- ++(\cubex,0,0) coordinate (d);
	    \draw[thick](d) -- (a);
	    \draw[draw=none] (a) -- ++(0,0,-\cubez) coordinate (e);
	    \draw[draw=none] (b) -- ++(0,0,-\cubez) coordinate (f);
	    \draw[draw=none] (c) -- ++(0,0,-\cubez) coordinate (g);
	    \draw[draw=none] (d) -- ++(0,0,-\cubez) coordinate (h);
	    \draw[DashedDraw] (c) -- (g) -- (f);
            \draw[DashedDraw] (g) -- (h);   
	 \foreach \i in {1} {
	    \def \x{10*\c+ 2*\b}
	    \def \y{-9*\c}
	    \def \z{-9*\c}
	    \pgfmathsetmacro{\cubex}{\a}
	    \pgfmathsetmacro{\cubey}{\a}
	    \pgfmathsetmacro{\cubez}{\a}
	    \draw[draw=none] (\x,\j*\y-\b,\i*\z) coordinate (a) -- ++(-\cubex,0,0) coordinate (b);
	    \draw[draw=none] (b) -- ++(0,-\cubey,0) coordinate (c);
	    \draw[draw=none]  (c) -- ++(\cubex,0,0) coordinate (d);
	    \draw[draw=none]  (a) -- ++(0,0,-\cubez) coordinate (e);
	    \draw[draw=none] (b) -- ++(0,0,-\cubez) coordinate (f);
	    \draw[draw=none]  (c) -- ++(0,0,-\cubez) coordinate (g);
	    \draw[draw=none]  (d) -- ++(0,0,-\cubez) coordinate (h);
	    \draw[DashedDraw] (c) -- (g) -- (f);
            \draw[DashedDraw] (g) -- (h);
	    \def \x{21*\c}
	    \def \y{-9*\c}
	    \def \z{-9*\c}
	    \pgfmathsetmacro{\cubex}{\a}
	    \pgfmathsetmacro{\cubey}{\a}
	    \pgfmathsetmacro{\cubez}{\a}
	    \draw[draw=none] (\x,\j*\y-\b,\i*\z) coordinate (a) -- ++(-\cubex,0,0) coordinate (b);
	    \draw[draw=none]  (b) -- ++(0,-\cubey,0) coordinate (c);
	    \draw[draw=none]  (c) -- ++(\cubex,0,0) coordinate (d);
	    \draw[thick](d) -- (a);
	    \draw[thick](a) -- ++(0,0,-\cubez) coordinate (e);
	    \draw[draw=none]  (b) -- ++(0,0,-\cubez) coordinate (f);
	    \draw[draw=none]  (c) -- ++(0,0,-\cubez) coordinate (g);
	    \draw[thick](d) -- ++(0,0,-\cubez) coordinate (h);
	    \draw[thick](e) -- (h);
	    \draw[DashedDraw] (c) -- (g) -- (f);
            \draw[DashedDraw] (g) -- (h);	
	    \def \x{21*\c}
	    \def \y{8*\c}
   	    \def \z{-9*\c}
	    \pgfmathsetmacro{\cubex}{\a}
	    \pgfmathsetmacro{\cubey}{\a}
	    \pgfmathsetmacro{\cubez}{\a}
	    \draw[thick](\x,-\b,\i*\z) coordinate (a) -- ++(-\cubex,0,0) coordinate (b);
	    \draw[draw=none]  (b) -- ++(0,-\cubey,0) coordinate (c);
	    \draw[draw=none]  (c) -- ++(\cubex,0,0) coordinate (d);
	    \draw[draw=none] (d) -- (a);
	    \draw[thick](a) -- ++(0,0,-\cubez) coordinate (e);
	    \draw[thick]  (b) -- ++(0,0,-\cubez) coordinate (f);
	    \draw[draw=none]  (c) -- ++(0,0,-\cubez) coordinate (g);
	    \draw[thick](d) -- ++(0,0,-\cubez) coordinate (h);
	    \draw[draw=none]  (e) -- (f) -- (g) -- (h) -- cycle;
	    \draw[thick](e) -- (h);
	    \draw[thick](e) -- (f);
	    \draw[DashedDraw] (c) -- (g) -- (f);
            \draw[DashedDraw] (g) -- (h);	    
	    }
	    \def \x{21*\c}
	    \def \y{-9*\b}
	    \pgfmathsetmacro{\cubex}{\a}
	    \pgfmathsetmacro{\cubey}{\a}
	    \pgfmathsetmacro{\cubez}{\a}
	    \draw[thick](\x,\j*\y-\b,0) coordinate (a) -- ++(-\cubex,0,0) coordinate (b);
	    \draw[thick](b) -- ++(0,-\cubey,0) coordinate (c);
	    \draw[thick](c) -- ++(\cubex,0,0) coordinate (d);
	    \draw[thick](d) -- (a);
	    \draw[thick](a) -- ++(0,0,-\cubez) coordinate (e);
	    \draw[draw=none]  (b) -- ++(0,0,-\cubez) coordinate (f);
	    \draw[draw=none]  (c) -- ++(0,0,-\cubez) coordinate (g);
	    \draw[thick](d) -- ++(0,0,-\cubez) coordinate (h);
	    \draw[thick](e) -- (h);
	    \draw[draw=none]  (e)--(f)--(g)--(h)-- cycle;
	    \draw[DashedDraw] (c) -- (g) -- (f);
            \draw[DashedDraw] (g) -- (h);
   	    }
\end{tikzpicture}}
\subfloat{
	\begin{tikzpicture}
	    \def \b{0.12}
   	    \def \a{20*\b}
    	\def \x{7*\b}
  	    \def \y{8*\b}
    \pgfmathsetmacro{\cubex}{\a}
    \pgfmathsetmacro{\cubey}{\a}
    \pgfmathsetmacro{\cubez}{\a}
    \draw[thick](\x,-\b,0) coordinate (a) -- ++(-\cubex,0,0) coordinate (b);
    \draw[thick] (b) -- ++(0,-\cubey,0) coordinate (c);
    \draw[thick] (c) -- ++(\cubex,0,0) coordinate (d);
    \draw[thick] (d) -- (a);
    \draw[thick]( a) -- ++(0,0,-\cubez) coordinate (e);
    \draw[thick] (b) -- ++(0,0,-\cubez) coordinate (f);
    \draw[draw=none] (c) -- ++(0,0,-\cubez) coordinate (g);
    \draw[thick] (d) -- ++(0,0,-\cubez) coordinate (h);
    \draw[thick] (e) -- (f);
    \draw[thick] (e) -- (h);
    \draw[DashedDraw] (c) -- (g) -- (f);
    \draw[DashedDraw] (g) -- (h);
    \end{tikzpicture}}
    \caption{Cutting planes defining the general partitions $P_1(V) \prec P_2(V) \prec P_3(V).$}
    \label{fig:part-general}
\end{figure}
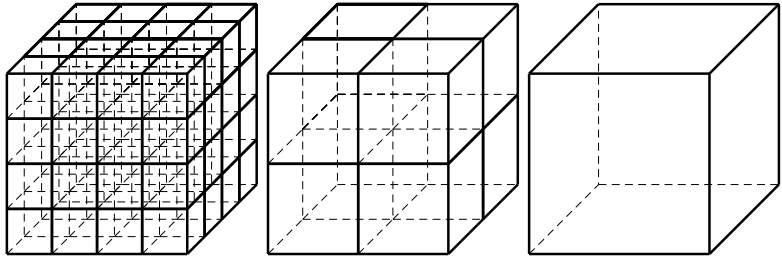

There are no interior nodes as before. Instead, at every level, the size of each node is reduced in compression, and then nodes are merged when forming the new partition for the subsequent level. This means that compression is the only step that includes Cholesky factorization (the scaling \autoref{eq:compr-scale}).
We obtain:

\medskip

\begin{enumerate}[align=left]
\item[\bf \schemeII] All interactions of all cells are compressed
\end{enumerate} 

\medskip

One should not expect approaches that use low-rank approximations in compression to perform well with \schemeII{} unless the matrix satisfies the strong admissibility criterion \cite{borm2002data}. The approaches that do use simple partitioning (\cite{pouransari2017fast,sushnikova2018compress,minden2017recursive,chen2019robust}) typically distinguish between neighboring and well-separated interactions, the latter assumed to be low-rank. Only the well-separated interactions are compressed. The methods of this paper can also be used in that context. The advantage of compressing all interactions of a node, however, is that we can obtain a much sparser operator, and we ensure that $\Amat_{\ell}$ is SPD.

\subsection{Accuracy comparison}
 In terms of accuracy of factorization, one can conceptually write:
\begin{displaymath}
\text{\schemeII} \prec \text{\schemeIa} \prec \text{\schemeI}\prec \text{\schemeIb}
\end{displaymath}
meaning that \schemeII{} is the least accurate (no interiors eliminated, all interactions compressed), and \schemeIb{} is the most accurate (interiors eliminated; only interactions of 2-cells with non-adjacent cells compressed).

\subsection{Complexities of factorization and applying the approximate inverse} As we describe next, \schemeIa{} and \schemeII{} can be expected to have $\OO{n}$ complexity. This is an advantage over \schemeI{} or \schemeIb{} used in \cite{cambier2019spand,ho2016hierarchical} which generally have $\OO{n \log{n}}$ complexity. The partitions used in our methods satisfy the assumptions of the proposition below, with $\delta=8$. The assumptions common to both considered cases should be satisfied by any reasonably balanced partitions. Similar (including more general) complexity analyses are described in \cite{cambier2019spand,pouransari2017fast,yang2016sparse}.

\begin{proposition}\label{thm:proposition-complexity} Assume that the polynomial compression is used with a fixed pre-chosen polynomial degree, and that the node sizes in the first level are bounded. Also assume that any given node in the algorithm has nonzero interactions with a bounded number of other nodes, and that a set $B \in P_k(V)$, is a union of a bounded number of sets from $B \in P_{k-1}(V)$. For $\delta > 1$, assume that there exists a constant $C_0 >0$ such that the partition $P_k(V)$ is composed of at most $C_0 n/\delta^{k-1}$ sets. Then:
\begin{enumerate}
	\item \label{comp-all} For a scheme compressing all interactions (such as \schemeIa{} and \schemeII{}) the complexities and memory requirements of computing the factorization, as well as applying $\Amat_{\ell}^{-1}$, are all $\OO{n}$.
	\item \label{comp-some} For any other scheme, if there exists a constant $C_1 > 0$ such that the size of any node at level $k$ is at most $C_1 \delta^{(k-1)/3}$, then the complexity of computing the factorization is $\OO{n \log{n}}$. Memory requirements as well as the complexity of applying $\Amat_{\ell}^{-1}$ are $\OO{n}$. This holds for \schemeI{} and \schemeIb{} with $\delta = 8$.
\end{enumerate}
\end{proposition}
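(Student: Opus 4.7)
The plan is to bound, at each level $k$, the total arithmetic and storage incurred by a single sweep of the algorithm, then sum over the $\ell = \OO{\log n}$ levels. Two quantities drive everything: the number $C_0 n/\delta^{k-1}$ of nodes at level $k$ (from the hypothesis) and the maximum size $s_k$ of an active subset $\Boxvar$ at that level. Once these are pinned down, each per-node operation is dense linear algebra on matrices of order $\OO{s_k}$, costing $\OO{s_k^3}$ flops and $\OO{s_k^2}$ storage.

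For part 1, I would first observe that whenever all interactions of a node are compressed, the filtered interaction matrix $\Matrix{N}$ of \autoref{eq:N-definition} has at most $(1+g)\pi$ columns; with both $g$ and $\pi$ bounded, the range of $\Matrix{N}$, and hence the new active subset, has size bounded by an absolute constant. Because a bounded number of such constant-size nodes are merged at the start of each subsequent level, the pre-compression node size stays $\OO{1}$ throughout. The level-$k$ work is therefore $\OO{n/\delta^{k-1}}$, summing geometrically to $\OO{n}$ across all levels, and the same bound applies to memory and to the cost of applying $\Amat_{\ell}^{-1}$ since each factor $\Matrix{G}_i, \Matrix{B}_i$ stores $\OO{1}$ entries per node.

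For part 2, I would use the hypothesis $s_k \le C_1 \delta^{(k-1)/3}$ directly. The per-node cost of the block Cholesky step \autoref{eq:elimination}, the scaling \autoref{eq:compr-scale}, and the QR step \autoref{eq:qr-overview} is each $\OO{s_k^3} = \OO{\delta^{k-1}}$, so the work at level $k$ is $\OO{(n/\delta^{k-1}) \cdot \delta^{k-1}} = \OO{n}$, giving $\OO{n \log n}$ overall. For memory and apply cost, a level contributes $\OO{(n/\delta^{k-1}) \cdot \delta^{2(k-1)/3}} = \OO{n/\delta^{(k-1)/3}}$, which is a convergent geometric series summing to $\OO{n}$.

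The main obstacle, and what really needs to be justified in order for part 2 to apply to \schemeI{} and \schemeIb{} with $\delta = 8$, is the bound $s_k = \OO{\delta^{(k-1)/3}}$ on the active subsets of $2$-cells. A $2$-cell at level $k$ starts with $\OO{d^2} = \OO{\delta^{2(k-1)/3}}$ grid vertices, so one must check that repeated polynomial compression (together with the merging rule forming each new partition) reduces its active subset to $\OO{\delta^{(k-1)/3}}$, matching the classical ``square-root-of-face'' scaling of nested dissection. Once this size bound is in place, the remaining steps are routine geometric-sum bookkeeping.
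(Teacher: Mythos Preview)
Your proposal is correct and follows essentially the same approach as the paper: bound the per-node cost by $\OO{s_k^3}$ flops and $\OO{s_k^2}$ storage, multiply by the $C_0 n/\delta^{k-1}$ nodes per level, and sum the resulting geometric (or constant) series over the $\OO{\log n}$ levels. Your arguments for both parts match the paper's almost line for line.

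The one place where you diverge is in the last paragraph. You identify the ``main obstacle'' as showing that repeated compression shrinks a $2$-cell's active subset from $\OO{\delta^{2(k-1)/3}}$ down to $\OO{\delta^{(k-1)/3}}$. The paper's justification is much shorter and points elsewhere: it simply notes that a $1$-cell at level $k$ contains $\OO{2^k}$ grid vertices. The point is that in \schemeI{} and \schemeIb{} the $1$-cells are \emph{never compressed}, so their size $\OO{2^k} = \OO{\delta^{(k-1)/3}}$ is the binding constraint. The $2$-cells, by contrast, do not start level $k$ with $\OO{d^2}$ active variables; their active subset at the start of level $k$ is the union of the active subsets of the level-$(k-1)$ cells merged into them, which (inductively) is already $\OO{2^{k-1}}$, dominated by the uncompressed $1$-cells that were absorbed. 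So no delicate compression analysis is needed for the $2$-cells---the bound is inherited directly from the $1$-cell size. You have overstated the difficulty of this step, but your overall argument is sound.
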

\begin{proof}
Consider first case~\ref{comp-all}. Since the polynomial compression with fixed polynomial degree is used, and every node has a nonzero interaction with $\OO{1}$ other nodes, the ranks of the \nmats{} \autoref{eq:N-definition} are likewise $\OO{1}$, and therefore the sizes of nodes are also bounded. 
The $\OO{n}$ factorization complexity follows from the fact that $\sum_{k=0}^{\infty}{\frac{n}{\delta^k}} = \OO{n}$, and similarly for memory requirements.

Consider now case~\ref{comp-some}. The cost associated with eliminating or compressing a node this time is $\OO{\delta^k}$. Hence we obtain the complexity bound from $\sum_{k=0}^{\lceil \log_\delta C_0n \rceil}{\frac{n}{\delta^k}\delta^k} =$ $ \OO{n \log{n}}$. Since we now need $\OO{\delta^{2k/3}}$ memory to store a node and its interactions, we obtain the memory requirement by noticing that $\sum_{k=0}^{\lceil \log_\delta C_0n \rceil}{\frac{n}{\delta^k}\delta^{2k/3}} = \sum_{k=0}^{\lceil \log_\delta C_0n \rceil}{n\delta^{-k/3}} = \OO{n}$. The same argument applies to show that applying $\Amat_{\ell}^{-1}$ is $\OO{n}$. To see that case~\ref{comp-some} above holds for \schemeI{} and \schemeIb{}, notice that the number of grid vertices inside a 1-cell is $\OO{2^k}.$ \end{proof}

\section{Approximation error and preconditioner quality}\label{sec:app-quality}

A successful preconditioner for SPD systems must be accurate on the eigenvectors corresponding to the near-kernel (smallest) eigenvalues. The results in \cite{yang2016sparse} suggest that for the (unit-length) eigenvector $\Matrix{v}_i$ corresponding to the given eigenvalue $\lambda_i > 0$, the contribution of the error $\| (\Amat - \Amat_{\ell}) \Matrix{v}_i \|_2,$ to the condition number of the preconditioned system, is amplified by $\lambda_i^{-1}$. Results in \cite{bebendorf2013constraints} show that, to achieve a condition number independent of the problem size, the accuracy of $\Amat_{\ell}$ can be relatively crude provided that \autoref{eq:equivalence} holds for $\Matrix{Y}$ with columns approximating well the near-kernel eigenspace.

 We now describe how preserving piecewise polynomial vectors in our algorithm leads to a better preconditioner. 
 Let $\Amat_i$ again denote the approximation to $\Amat$ obtained after completing the $i$-th level. For $i \in \{1,2, \ldots, \ell \}$, define the error term:
$ \Matrix{E}_i := \Amat_{i} - \Amat_{i-1},$ where by definition $\Amat_{0} := \Amat$. We have:
\[ \Amat_{\ell} - \Amat  = \Matrix{E}_1 + \Matrix{E}_2 + \cdots + \Matrix{E}_{\ell} \]
The error terms $\Matrix{E}_i$ are the results of compression (gaussian elimination does not introduce errors in exact precision). The effect of applying polynomial compression is described in the following.
\begin{proposition}\label{prop:EPi}
Let $k \in \{1,2, \ldots, \ell \}$ and $B \in P_{k}(V)$, where $P_{k}(V)$ is the partition at level $k$ of the algorithm. Also, let $\Poly$ denote the global polynomial basis matrix described in \autoref{sec:polynomial-basis}. Then, for any $1 \leq i \leq k,$
\[
 \Matrix{E}_i \Matrix{\Pi}_B = 0
\]
where $\Matrix{\Pi}_B$ is the $|B| \times \pi$ matrix defined by:
\begin{equation}
(\Matrix{\Pi}_B)_{ij}=
  \begin{cases}
   \Matrix{\Pi}_{ij} & \text{if } i \in B \\
   0 & \text{otherwise.}
  \end{cases}
\end{equation}
In particular, the action of $\Amat$ on $\Poly$ is preserved:
\begin{equation}\label{eq:preserve-pi}
	\Amat_{\ell} \Poly = \Amat \Poly
\end{equation}
\end{proposition}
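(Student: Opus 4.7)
The plan is to prove the claim level-by-level, with \autoref{thm:lemma} (single-compression preservation) and the coarsening structure from \autoref{def:partition-coarsening} as the main ingredients; \autoref{eq:preserve-pi} then follows by telescoping at the top level.

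The heart of the argument is to establish $\Matrix{E}_i \Matrix{\Pi}_B = 0$ for $B \in P_i(V)$. Since the block-Cholesky eliminations (\autoref{eq:elimination}) are exact in exact arithmetic, all discrepancy at level $i$ arises from the compressions (\autoref{eq:compression}). I would apply \autoref{thm:lemma} in sequence to each compressed node at level $i$, using the identification \autoref{eq:PhiD}, which says that the locally maintained $\Matrix{\Phi}_B$ is the cumulative outer factor $\Matrix{B}_{i-1}^T \Matrix{G}_{i-1}^T \cdots \Matrix{B}_1^T \Matrix{G}_1^T$ applied to $\Matrix{\Pi}_B$ and restricted to active rows. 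Wrapping with the exact elimination factors on both sides then propagates preservation to $\Amat_i \Matrix{\Pi}_B = \Amat_{i-1} \Matrix{\Pi}_B$.

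For $B \in P_k(V)$ with $k > i$, transitivity of coarsening (\autoref{def:partition-coarsening}) writes $B$ as a disjoint union $B = B_1 \cup \cdots \cup B_c$ with $B_j \in P_i(V)$. By \autoref{def:PiB}, the $\Matrix{\Pi}_{B_j}$ have disjoint row supports and $\Matrix{\Pi}_B = \sum_j \Matrix{\Pi}_{B_j}$, so $\Matrix{E}_i \Matrix{\Pi}_B = \sum_j \Matrix{E}_i \Matrix{\Pi}_{B_j} = 0$ by linearity. Finally, \autoref{eq:preserve-pi} is recovered by applying the result at the top level: $P_\ell(V) = \{V\}$ gives $\Matrix{\Pi}_V = \Poly$, and the errors telescope to $\Amat_\ell \Poly - \Amat \Poly = \sum_{i=1}^{\ell} \Matrix{E}_i \Poly = 0$.

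The main obstacle is the bookkeeping in the first step: one must carefully verify that the successive updates \autoref{eq:Y-update} together with the block-merging rule of \autoref{sec:newphi} keep $\Matrix{\Phi}_B$ synchronized with the cumulative factor description \autoref{eq:PhiD}, and that rows eliminated in earlier levels contribute harmlessly when $\Matrix{\Pi}_B$ is moved past the outer triangular factors. Once this synchronization is in place, \autoref{thm:lemma} transfers cleanly from its original single-step statement to the intermediate matrices encountered throughout the recursion.
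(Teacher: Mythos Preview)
Your proposal is correct and follows essentially the same approach as the paper: the paper proves the result by induction on $k$, using \cref{thm:lemma} for the base case, the coarsening structure (\autoref{def:PiB}) to handle $i\le k$ for $B\in P_{k+1}(V)$, and \autoref{eq:PhiD} to handle $i=k+1$. Your organization differs only cosmetically---you first isolate the diagonal case $B\in P_i(V)$ and then extend to $k>i$ by linearity over the disjoint union---but the ingredients and the bookkeeping obstacle you flag are exactly those the paper relies on.
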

\begin{proof}
The proposition is true for $k = 1$ which follows from \cref{thm:lemma}. Now suppose that the proposition is true for a given $1 \leq k \leq \ell-1$. Then for $B \in P_{k+1}(V)$ and $1 \leq i \leq k$ we have $\Matrix{E}_i \Matrix{\Pi}_B = 0$. This is a consequence of the fact that $B$ is a union of sets from $P_{k}(V)$ (see \autoref{def:PiB}). To conclude the proof, we need to show that $\Matrix{E}_{k+1} \Matrix{\Pi}_B = 0$ but this is a consequence of \autoref{eq:PhiD} and the remarks that follow it.
\end{proof}
Denoting by $\Matrix{u}_k$ the piecewise polynomial approximation to $\Matrix{u} \in \mathbb{R}^n$ at the $k$-th level of the algorithm, we can expect that $\| \Matrix{u} - \Matrix{u}_k \|_2$ will be small if $\Matrix{u}$ is a smooth function of the grid. The approximation error on $\Matrix{u}$ at the $k$-th level will also be then small because $\Matrix{E}_k \Matrix{u} = \Matrix{E}_k (\Matrix{u} - \Matrix{u}_k)$. The following corollary, which is a direct consequence of \cref{prop:EPi}, makes this statement precise.
\begin{corollary}\label{col:approx}
For $\Matrix{u} \in \mathbb{R}^n$ and $k \in \{1, 2, \ldots, \ell \}$, let $\Matrix{u}_{k}$ denote the
projection of $\Matrix{u}$ onto $
\Upsilon_k := \text{span} \{ \Matrix{\Pi}_{B} \}_{B \in P_{k}(\Omega)}$. We then have:
 \[ \left( \Amat_{\ell} - \Amat \right) \Matrix{u} = \Matrix{E}_1 \left(\Matrix{u} -
 \Matrix{u}_{1} \right) + \Matrix{E}_2 \left(\Matrix{u} - \Matrix{u}_{2} \right) + \cdots +
 \Matrix{E}_{\ell} \left(\Matrix{u} - \Matrix{u}_{\ell} \right) \]
 Put differently, if $\Matrix{P}_k$ denotes the orthogonal projection matrix onto $\Upsilon_k^{\perp}$, then:
  \[ \Amat_{\ell} - \Amat = \Matrix{P}_1 \Matrix{E}_1 \Matrix{P}_1 + \Matrix{P}_2
  \Matrix{E}_2 \Matrix{P}_2 + \cdots + \Matrix{P}_{\ell} \Matrix{E}_{\ell} \Matrix{P}_{\ell} \]
\end{corollary}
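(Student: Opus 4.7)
The plan is to bootstrap the corollary from \cref{prop:EPi} together with the telescoping identity $\Amat_{\ell}-\Amat = \sum_{k=1}^{\ell}\Matrix{E}_k$, which holds by the definition $\Matrix{E}_k := \Amat_k - \Amat_{k-1}$.

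First, I would handle the vector identity. For each fixed $k$, the projection $\Matrix{u}_k$ lies in $\Upsilon_k = \text{span}\{\Matrix{\Pi}_B\}_{B\in P_k(V)}$, so $\Matrix{u}_k = \sum_{B\in P_k(V)} \Matrix{\Pi}_B \Matrix{c}_B$ for some coefficient vectors $\Matrix{c}_B$. \cref{prop:EPi} (applied with $i=k$) gives $\Matrix{E}_k \Matrix{\Pi}_B = 0$ for every $B\in P_k(V)$, hence $\Matrix{E}_k \Matrix{u}_k = 0$ and therefore $\Matrix{E}_k \Matrix{u} = \Matrix{E}_k(\Matrix{u}-\Matrix{u}_k)$. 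Summing over $k$ and using the telescoping identity yields
\[
(\Amat_{\ell} - \Amat)\Matrix{u} \;=\; \sum_{k=1}^{\ell} \Matrix{E}_k \Matrix{u} \;=\; \sum_{k=1}^{\ell} \Matrix{E}_k(\Matrix{u} - \Matrix{u}_k),
\]
which is the first claimed formula.

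For the matrix identity, I would first rewrite $\Matrix{u} - \Matrix{u}_k = \Matrix{P}_k \Matrix{u}$ since $\Matrix{u}_k$ is by definition the orthogonal projection of $\Matrix{u}$ onto $\Upsilon_k$ and $\Matrix{P}_k$ is the projection onto $\Upsilon_k^\perp$. Substituting this into the previous display and noting that $\Matrix{u}\in\mathbb{R}^n$ was arbitrary gives $\Amat_{\ell} - \Amat = \sum_{k=1}^{\ell}\Matrix{E}_k\Matrix{P}_k$. To turn this one-sided sandwich into the symmetric one $\Matrix{P}_k\Matrix{E}_k\Matrix{P}_k$, the key observation is that each $\Matrix{E}_k$ is symmetric (being the difference of two symmetric matrices $\Amat_k$ and $\Amat_{k-1}$, both of which are SPD by the construction of \cref{sec:detailed-description} and the SPD-preservation in \cref{thm:lemma}). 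Applying \cref{prop:EPi} to $\Matrix{E}_k^T = \Matrix{E}_k$ gives $\Matrix{\Pi}_B^T \Matrix{E}_k = 0$ for every $B\in P_k(V)$, so the range of $\Matrix{E}_k$ is contained in $\Upsilon_k^\perp$. Consequently $\Matrix{P}_k \Matrix{E}_k = \Matrix{E}_k$, and therefore $\Matrix{E}_k\Matrix{P}_k = \Matrix{P}_k\Matrix{E}_k\Matrix{P}_k$, which after summation finishes the proof.

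The only non-mechanical ingredient is the step that upgrades the one-sided sandwich $\Matrix{E}_k\Matrix{P}_k$ to the symmetric form $\Matrix{P}_k\Matrix{E}_k\Matrix{P}_k$; this requires explicitly invoking the symmetry of $\Matrix{E}_k$ and then applying \cref{prop:EPi} ``on the left.'' Everything else is bookkeeping: telescoping the error, recognizing $\Matrix{u}-\Matrix{u}_k$ as $\Matrix{P}_k\Matrix{u}$, and writing $\Matrix{u}_k$ as a linear combination of the columns of $\{\Matrix{\Pi}_B\}_{B\in P_k(V)}$ so that \cref{prop:EPi} applies.
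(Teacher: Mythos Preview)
Your proof is correct and is precisely the argument the paper has in mind: the text states only that the corollary is ``a direct consequence of \cref{prop:EPi}'' without spelling out any steps, and your write-up fills in exactly those details (telescoping, $\Matrix{E}_k\Matrix{u}_k=0$, and the symmetry of $\Matrix{E}_k$ to pass from $\Matrix{E}_k\Matrix{P}_k$ to $\Matrix{P}_k\Matrix{E}_k\Matrix{P}_k$). In fact your argument shows a hair more, namely $\Matrix{E}_k = \Matrix{E}_k\Matrix{P}_k = \Matrix{P}_k\Matrix{E}_k = \Matrix{P}_k\Matrix{E}_k\Matrix{P}_k$ for each $k$ individually.
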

Based on \Cref{col:approx}, we can write:
\begin{align}\label{eq:precondition}
\nonumber
\Amat_{\ell} = \Amat + \sum_{k=1}^{\ell}{\Matrix{E}_k} & = \Amat + \sum_{k=1}^{\ell}{ \Matrix{P}_k \Matrix{E}_k  \Matrix{P}_k} \\
\Amat^{-\frac{1}{2}} \Amat_{\ell}\Amat^{-\frac{1}{2}} & = \Matrix{I} + \sum_{k=1}^{\ell}{\Amat^{-\frac{1}{2}}  \Matrix{P}_k \Matrix{E}_k \Matrix{P}_k \Amat^{-\frac{1}{2}}}
\end{align}
where for the eigenvalue decomposition $\Amat = \Matrix{V} \Matrix{\Lambda} \Matrix{V}^T$ we write $\Amat^{\frac{1}{2}} = \Matrix{V} \Matrix{\Lambda^{\frac{1}{2}}} \Matrix{V}^T$. We would like the matrix \autoref{eq:precondition} to be as close to identity as possible. In our case, we also know that it is SPD (see \cref{thm:lemma}). Notice that:
\begin{equation}\label{eq:prec-error-term}
	\| \Amat^{-\frac{1}{2}} \Matrix{P}_k \Matrix{E}_k \Matrix{P}_k \Amat^{-\frac{1}{2}} \|_2 \leq
	\| \Matrix{E}_k \|_2 \| \Matrix{P}_k \Amat^{-1} \Matrix{P}_k \|_2
\end{equation}
 The norm $\| \Matrix{E}_k \|_2$  can be made small using a low-rank approximation as described in \autoref{sec:low-rank}. However, if $\Amat$ is ill-conditioned, then $\Amat^{-1}$ will have large eigenvalues, and the norm on the left hand side of \autoref{eq:prec-error-term} may become large even if $\| \Matrix{E}_k \|_2$ is relatively small. Ensuring that the range of $\Matrix{P}_k$ approximates well the range of the eigenvectors of $\Amat^{-1}$ with small associated eigenvalues, we can therefore directly target the critical eigenspace of ill-conditioned systems, and make small the norm $\| \Matrix{P}_k \Amat^{-1} \Matrix{P}_k \|_2$. In particular (see also \cite{bebendorf2016spectral}, Lemma 2.1), if:
\begin{equation}\label{eq:error-condition}
\sum_{k=1}^{\ell}{	\| \Matrix{E}_k \|_2 \|\Matrix{P}_k \Amat^{-1} \Matrix{P}_k \|_2} \leq \mu < 1
\end{equation}
then from Weyl's inequality for eigenvalues we obtain for the condition number:
\begin{equation*}
	\kappa(\Amat_{\ell}^{-\frac{1}{2}} \Amat \Amat_{\ell}^{-\frac{1}{2}}) \leq \frac{1+\mu}{1 - \mu}
\end{equation*}
This suggests that to obtain a bounded condition number of the preconditioned system, one needs the spaces $\Upsilon_k = \text{span} \{ \Matrix{\Pi}_{B} \}_{B \in P_{k}(\Omega)}$ to approximate well the eigenspace corresponding to the smalles eigenvalues of $\Amat$.

\section{Numerical results}\label{sec:experiments} We apply our methods to problems of importance in engineering. Our goals in this section are:

\begin{enumerate}
	\item To test the efficiency of our methods when applied to large systems. Ideally, we would like to observe the optimal $\OO{n}$ scalings of solution times.
	\item To compare our approach to the one in which compression is based on the standard low-rank approximation of the off-diagonal blocks.
	\item To benchmark the different preconditioners described in \autoref{sec:solver-family}, with varying orders of polynomial compression (piecewise constant, piecewise linear, and piecewise quadratic; that is, when $j$ in the definition of $\Poly^j$ from \autoref{sec:polynomial-basis}, is $j=0,1,2$, respectively).
\end{enumerate}

When testing \schemeI, \schemeIa, and \schemeIb{}, we use $b = 3$, i.e., the 3-cell at level $k$ is typically a cube encompassing $(3 \cdot 2^{k-1} - 1)^3$ grid vertices (see \autoref{sec:solver-family}); we also skip the compression in the first two levels, while the nodes are still small. 

When testing \schemeII{} we choose $b=3$, $b=4$, and $b=5$ when using respectively, piecewise constant, piecewise linear, and piecewise quadratic compression. In other words, the partition sets at level $k$ typically contain variables corresponding to, respectively, $(3 \cdot 2^{k-1})^3$, $(4 \cdot 2^{k-1})^3$, and $(5 \cdot 2^{k-1})^3$ grid vertices. We use compression starting from the first level.

In each test, when the number of not-yet-eliminated unknowns during factorization is small enough (smaller than the size of the largest node encountered so far), we factorize the remaining system exactly using block Cholesky decomposition (i.e., form the last partition). This does not change the maximal node size.

We solve the equation $\Amat \Matrix{x} = \Matrix{b}$ using Conjugate Gradient (CG) with operator $\Amat_{\ell}^{-1}$ used as a preconditioner. Each time we choose a random right hand side $\Matrix{b}$ to ensure that every eigenvector of $\Amat$ contributes to $\Matrix{b}$.

Throughout this section, we use the following notation:
\begin{itemize}[leftmargin=*]
	\item $n$ is the number of unknowns in the analyzed system;
	\item $it_C$ is the number of iterations of Conjugate Gradient needed to converge to a relative residual of 2-norm below $10^{-10}$;
	\item $t_F$ is the factorization time, i.e., the CPU time taken to compute $\Amat_{\ell}^{-1}$, in seconds;
	\item $t_S$ is the solution time, i.e., the CPU time taken by CG to converge, in seconds;
	\item $m_R$ is the maximum memory usage during the computation, in GB.
\end{itemize}

Our implementation is sequential and was written in Python 3.6.1., exploiting NumPy 1.14.3 and SciPy 1.1.0 for numerical computations \cite{oliphant2006guide,Scipy2001}. The tests were run on CPUs with Intel(R) Xeon(R) E5-2640v4 (2.4GHz) with up to 1024 GB RAM. All pivoted QR decompositions were performed by calling LAPACK's \verb|geqp3| \cite{anderson1999lapack}.

\subsection{Descriptions of test cases}
\subsubsection{3D Poisson equation}\label{test:poisson}
The first test case is the classical 3D (constant-coefficient) Poisson equation in a cube:
\begin{equation}\label{eq:poisson}
\begin{matrix} \Delta u(x) = f & \forall{x \in \Omega \in \left[ 0,1\right]^3}, & u|_{\partial \Omega} = 0
\end{matrix}	
\end{equation}
with Dirichlet boundary conditions, discretized using the standard 7-point stencil method. 
The largest system has approximately $16 \cdot 10^6$ unknowns.

\subsubsection{Incompressible flow in the SPE10 Reservoir}\label{test:spe}
The second test case is the 3D flow equation (Darcy's law) of an incompressible single-phase fluid, in an incompressible porous medium:

\begin{equation}\label{eq:darcy}
\begin{matrix}
\nabla \cdot \left( \lambda \cdot \nabla u(x) \right) = 0, & \forall{x \in \Omega}
\end{matrix} 
\end{equation}
discretized using the finite volume method (the 2-point flux approximation), with mixed Dirichlet and Neumann boundary conditions.

\begin{figure}[htb]
	\centering
	\begin{minipage}{.5\textwidth}
	\centering
	\includegraphics[width=0.8\textwidth]{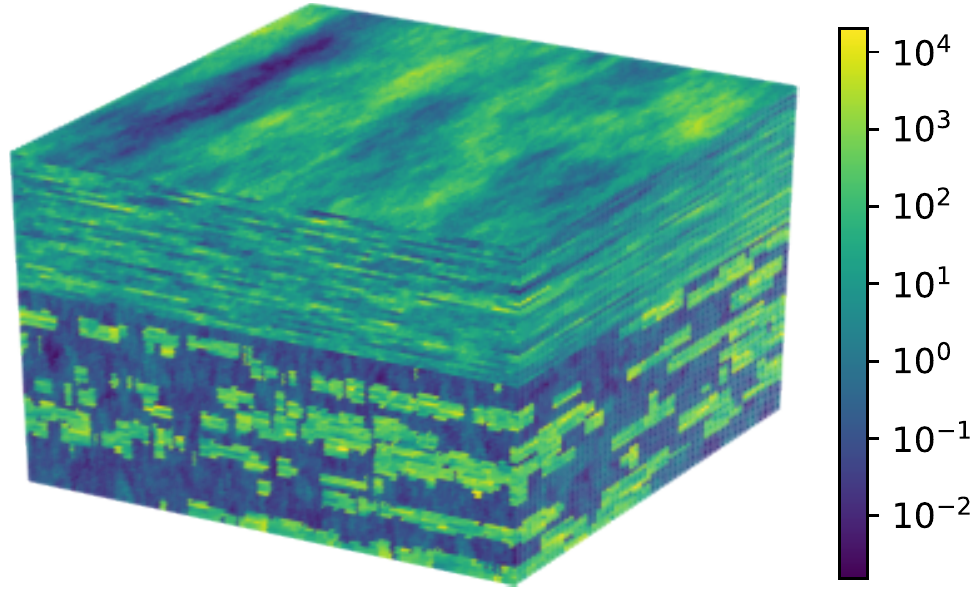}
	\caption{Mobility field in the \\ SPE10 benchmark reservoir.}
	\label{fig:spe10}
	\end{minipage}
	\begin{minipage}{.4\textwidth}
	\centering
	\includegraphics[width=0.75\textwidth]{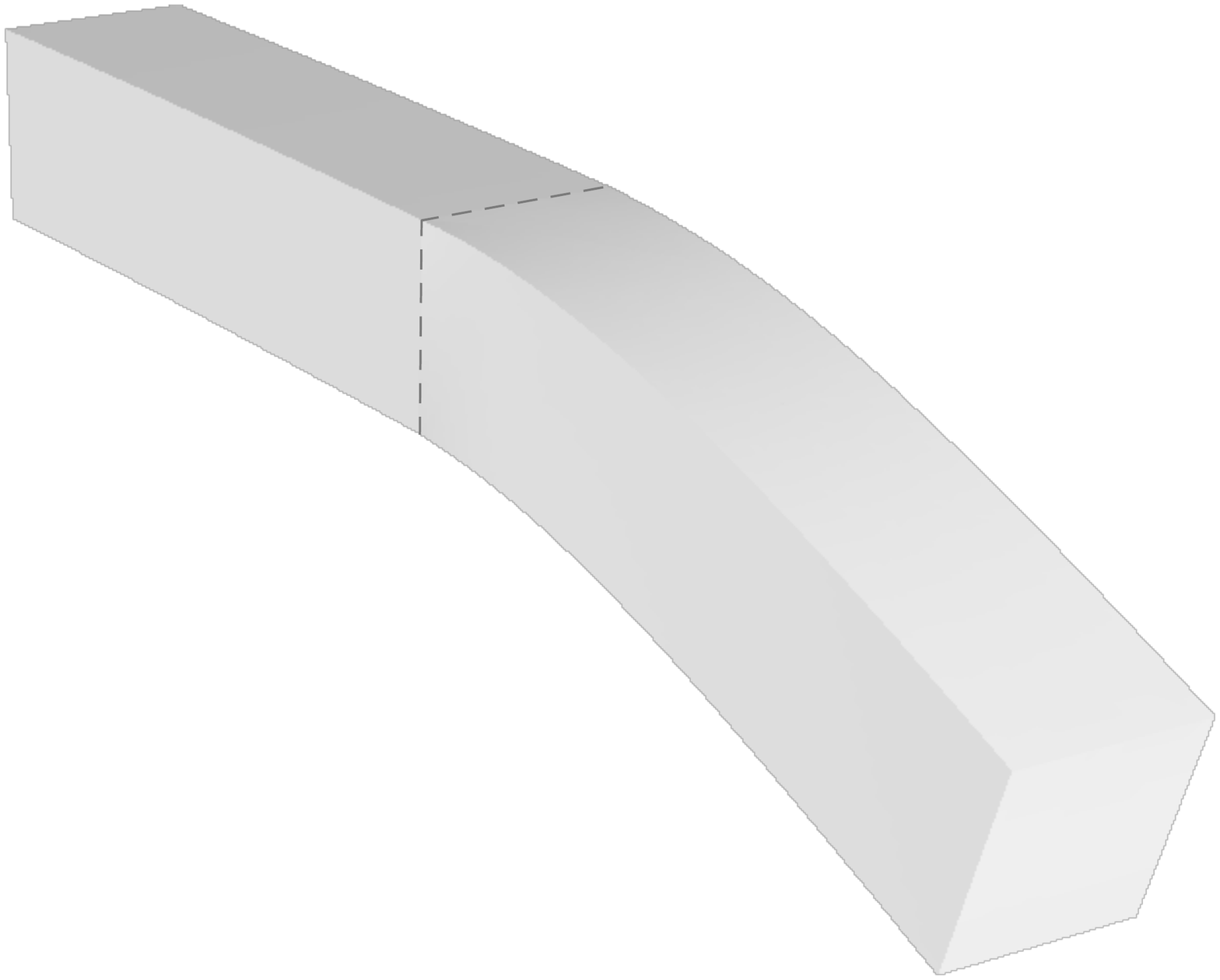}
	\caption{The cantilever beam used in the linear elasticity test.}
	\label{fig:beam}
	\end{minipage}
\end{figure}

To define $\Omega$ and the field of coefficients $\lambda$ (the mobility field), we use the SPE10 Reservoir \cite{christie2001tenth} which is an important benchmark problem
in the petroleum engineering community. The field $\lambda$ varies smoothly in some layers of the reservoir, and is highly discontinuous in other layers (see \autoref{fig:spe10}). 
The result is a Poisson-like equation whose corresponding discretized system is very ill-conditioned.

The smallest test case has approximately $0.4 \cdot 10^6$ variables and is obtained by considering only the upper layers of the SPE10 Reservoir, where $\lambda$ changes relatively smoothly. The case with approximately $1.1 \cdot 10^6$ variables is obtained from the full original SPE10 Reservoir (with grid dimensions $220 \times 80 \times 65$). Larger cases are obtained by periodically tiling the original reservoir in each direction to obtain cubes of the desired size (based on idea from \cite{manea2016parallel}). The matrices are obtained by fixing the pressure value on one outer side of the reservoir, and imposing a constant flow on the opposite side, with no-flow conditions on other sides, but the resulting system $\Amat \Matrix{x} = \Matrix{b}$ in each case is tested with a random right hand side, as described above.

\subsubsection{Linear elasticity}\label{test:linel}
The third test case is a finite-element approximation to the weak form of the linear elasticity equation:
\begin{equation}\label{eq:lin_el} 
\begin{matrix}
- \nabla \cdot \sigma(u(x)) = 0, &  \forall{x \in \Omega}
\end{matrix}
\end{equation}	
where $u: \Omega \rightarrow \mathbb{R}^3$ is the displacement field and $\sigma$ is the stress tensor satisfying:
\begin{equation*} 
\sigma(u(x)) = \lambda (\nabla \cdot u) I + \mu (\nabla u + \nabla u^T)
\end{equation*}
with $\lambda$ and $\mu$ denoting the material Lam\'{e} constants. 

 Our test problem is a cantilever beam composed of two segments, as in \autoref{fig:beam}. The constants corresponding to the left hand side, and the right hand side segments are dimensionless $(\lambda_\text{LHS},\mu_\text{LHS}) = (1.0,1.0)$ and $(\lambda_{\text{RHS}}, \mu_{\text{RHS}} = 50.0,50.0),$ respectively. The boundary conditions are a fixed zero displacement on the left side of the boundary, i.e., $u = 0$ there, and a vertical constant pull down force applied on the other side. The test case (including the discretization using a regular grid with tetrahedral elements) is obtained from the MFEM library \cite{mfem-library} where the reader may find all details. We test systems of increasing sizes by uniformly refining the grid in each dimension; the largest case consists of approximately $6.5 \cdot 10^6$ variables.
 

This test differs from the previous ones in that 
each grid vertex $v$ is now associated with a displacement vector $\Matrix{u}_v = (v_x,v_y,v_z) $ $\in \mathbb{R}^3$ (which is a subvector of the global solution vector $\Matrix{u}$).  As mentioned in \autoref{sec:polynomial-basis}, the definition of the polynomial basis has to be modified. 
Assuming that the variable indices are ordered so that the indices corresponding to all the $v_x$ above (of all grid vertices) come first, then the indices corresponding to all the $v_y$ and then all the $v_z$ (in each case retaining the same order of the underlying grid vertices), we naturally define:
\begin{equation}\label{def:poly-linel}
    \Poly := 
    \begin{pmatrix}
        \Poly^{j} & & \\
                    & \Poly^{j} & \\
                    & & \Poly^{j}
    \end{pmatrix}
\end{equation} 
where $j$ is the chosen polynomial degree, and $\Poly^j$ is defined as in \autoref{sec:polynomial-basis}. Conceptually, the displacement component in each direction is then independent of the displacement components in other directions.


\subsection{Preserving the rigid body modes}
The elasticity test \autoref{eq:lin_el} is a loosely constrained body. In such a case, the so called rigid body modes are known to be in the near-kernel subspace (see for instance \cite{lutz1998elimination}). With $\Poly$ defined as above (\autoref{def:poly-linel}), $\Amat_{\ell}$ preserves the action of $\Amat$ on the rigid body modes (more precisely, they are preserved exactly when using at least piecewise linear compression, which follows from \autoref{eq:preserve-pi} and \autoref{def:poly-linel}). When developing preconditioners for the linear elasticity equation, typically a special care has to be taken to reproduce the action of $\Amat$ on the rigid body modes. In our case, the preservation of these modes is in fact a by-product of the design of our methods.

\subsection{Factorization times and memory usage}
 The theoretical scalings predicted by \Cref{thm:proposition-complexity} are confirmed by our tests. In \autoref{fig:factorization_scaling_poisson} we show examples of hierarchical factorization timings. In \autoref{fig:memory_lin_elast} we show example memory usage.  All plots are well within $\OO{n}$ bounding lines.
 
\begin{figure}[htb]
    \centering
    \begin{tikzpicture}
        \begin{groupplot}[
            group style={
                group name=poisson_1,
                group size=2 by 1,
                xlabels at=edge bottom,
                xticklabels at=edge bottom,
                vertical sep=0.2cm,
                horizontal sep=0.4cm,
                ylabels at=edge left,
                yticklabels at=edge left,
            },
            xmode=log,ymode=log,
            ymin=10,ymax=4000,
            xmin=400000,xmax=17000000,
            xtick={1000000,10000000},
            xticklabels={$10^6$,$10^7$},
        ]
%
            
        \nextgroupplot[width=5.5cm,height=4.75cm,ylabel style={align=center},ylabel={Factorization time \\$t_F$ (sec.)},
        	legend entries=
        	{Pcw. const., Pcw. lin., Pcw. quad., $\OO{n}$, $\OO{n^{3/2}}$},
        	legend columns = 1,xlabel={Problem size $n$},
        	legend style={at={(2.85,1.0)}}]
            \addplot[PcwConst]
            table[x=n, y=tf]{fig/1b/poisson_pcwconst_1b.dat};
            \addplot[PcwLin]
            table[x=n, y=tf]{fig/1b/poisson_pcwlin_1b.dat};
            \addplot[PcwQuad]
            table[x=n, y=tf]{fig/1b/poisson_pcwquad_1b.dat};
            \addplot [domain=500000:16000000,dashed]{3e-4 * x};
            \addplot [domain=500000:16000000,Dashed2]{1e-7 * x^(1.5)};
            \addplot [domain=500000:16000000,dashed]{3e-5 * x};
            \node at (axis cs:1100000,2500) {\schemeIb};            

       \nextgroupplot[width=5.5cm,height=4.75cm,
       		xlabel={Problem size $n$}]
            \addplot[PcwConst]
            table[x=n, y=tf]{fig/2/poisson_pcwconst_2.dat};
            \addplot[PcwLin]
            table[x=n, y=tf]{fig/2/poisson_pcwlin_2.dat};
            \addplot[PcwQuad]
            table[x=n, y=tf]{fig/2/poisson_pcwquad_2.dat};
            \addplot [domain=500000:16000000,dashed]{3e-4 * x};
            \addplot [domain=500000:16000000,Dashed2]{1e-7 * x^(1.5)};
            \addplot [domain=500000:16000000,dashed]{3e-5 * x};        
            \node at (axis cs:1100000,2500) {\schemeII};
            
        \end{groupplot}
    \end{tikzpicture}
    \caption{Factorization times for the Poisson equation \autoref{eq:poisson}, using \schemeIb{} and \schemeII{} schemes which had, respectively, highest, and lowest factorization timings among the tested schemes.}
    \label{fig:factorization_scaling_poisson}
\end{figure}
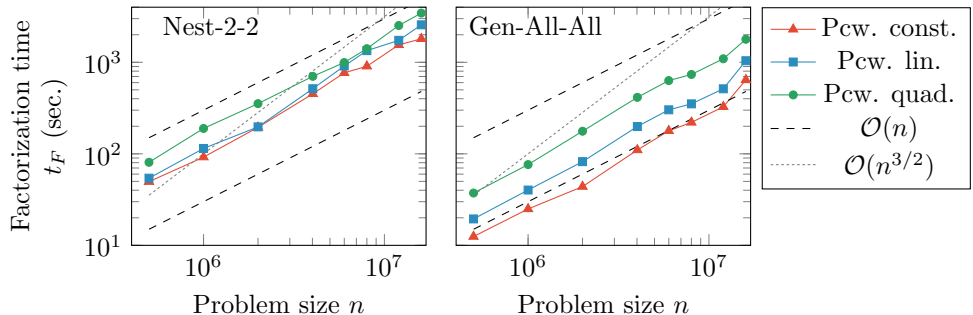

\begin{figure}[htbp]
    \centering
    \begin{tikzpicture}
        \begin{groupplot}[
            group style={
                group name=lin_elast_1,
                group size=2 by 1,
                ylabels at=edge left,
                yticklabels at=edge left,
                xlabels at=edge bottom,
                xticklabels at=edge bottom,
                vertical sep=0.2cm,
                horizontal sep=0.4cm,
            },
            xmode=log,
            ymode=log,
            xtick={10000,100000,1000000,10000000},
            xticklabels={$10^4$,$10^5$,$10^6$,$10^7$},
            ytick={1,10,100,1000,10000},
            yticklabels={$10^0$,$10^1$,$10^2$,$10^3$,$10^4$},
            ymin=0.1,ymax=950,
        ]
        \nextgroupplot[width=5.5cm,height=4.75cm,ylabel style={align=center},ylabel={Memory, $m_R$ (GB)},
            xlabel={Problem size $n$},
        	legend entries=
        	{Pcw. const., Pcw. lin., Pcw. quad., $\OO{n}$,
        	$\OO{n^{3/2}}$},
        	legend columns = 1,
        	legend style={cells={align=left},at={(2.85,1.0)}}]
            \addplot[PcwConst]
            table[x=n, y=mem]{fig/1a/lin_el_pcwconst_1a.dat};
            \addplot[PcwLin]
            table[x=n, y=mem]{fig/1a/lin_el_pcwlin_1a.dat};
            \addplot[PcwQuad]
            table[x=n, y=mem]{fig/1a/lin_el_pcwquad_1a.dat};
            
            \addplot [domain=15795:6502275,dashed]{2e-4 * x};
            \addplot [domain=15795:6502275,Dashed2]{6e-7 * x^(1.5)};
            \addplot [domain=15795:6502275,dashed]{5e-6 * x};
            \node at (axis cs:60000,500) {\schemeIa};
            
        \nextgroupplot[width=5.5cm,height=4.75cm,xlabel={Problem size $n$},]
            \addplot[PcwConst] 
            table[x=n, y=mem]{fig/1/lin_el_pcwconst_1.dat};
            \addplot[PcwLin]
            table[x=n, y=mem]{fig/1/lin_el_pcwlin_1.dat};
            \addplot[PcwQuad]
            table[x=n, y=mem]{fig/1/lin_el_pcwquad_1.dat};           
            
            \addplot [domain=15795:6502275,dashed]{2e-4 * x};
            \addplot [domain=15795:6502275,Dashed2]{6e-7 * x^(1.5)};
            \addplot [domain=15795:6502275,dashed]{5e-6 * x};
            \node at (axis cs:60000,500) {\schemeI};

%
        \end{groupplot}
    \end{tikzpicture}
    \caption{Example maximum memory usage for the linear elasticity equation \autoref{eq:lin_el}. 
    }
    \label{fig:memory_lin_elast}
\end{figure}
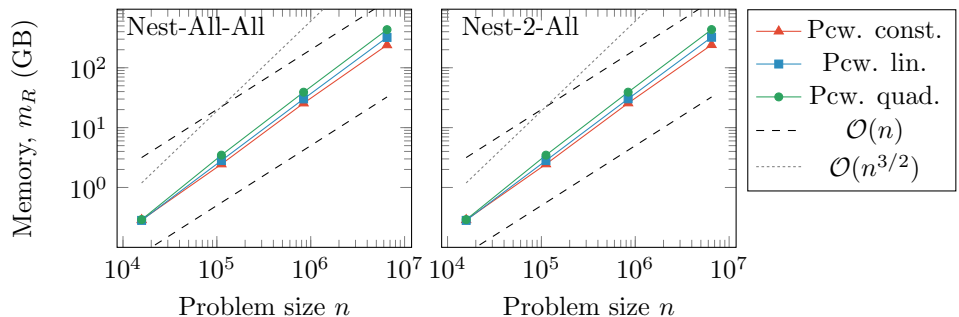

\subsection{Comparison to standard low-rank approximation}
For each tested scheme (\schemeIa, \schemeI, \schemeIb, \schemeII), we compare our results to the ones obtained when polynomial compression is replaced by a standard low-rank approximation (used by other hierarchical solvers, e.g., \cite{pouransari2017fast,cambier2019spand,chen2019robust,sushnikova2018compress,feliu2018recursively,ho2016hierarchical}). Namely, when computing the factorization with the given elimination and compression strategy to obtain $\Amat_{\ell},$ we record the sizes of all $\Matrix{Q}_1$ matrices from \autoref{eq:QR} used in compression. We then compute the factorization again with exactly the same set-up, except this time, to obtain the matrix $\Matrix{Q}$ from \autoref{eq:QR}, we find a low-rank approximation to the off-diagonal blocks using the column-pivoted rank-revealing QR \cite{chan1987rank,golub2012matrix}:
\begin{equation}\label{def:rrqr2}
    \Matrix{Q}\Matrix{R} = 
		\Matrix{\widehat{A}}_{\Boxvar \Boxvars{N}}
    \Matrix{P}
\end{equation}
However, the split $\Matrix{Q} = \begin{pmatrix} \Matrix{Q}_1 & \Matrix{Q}_2 \end{pmatrix}$ is such that the number of columns of $\Matrix{Q}_1$ is the same as before. As a result, the sizes of all nonzero matrix blocks when computing the factorization are the same, except inside the compression step. Denoting the obtained operator by $\Matrix{\hat{A}}_{\ell}$ we have that applying $\Amat_{\ell}^{-1}$ or $\Matrix{\hat{A}}_{\ell}^{-1}$ involves exactly the same cost. We call the obtained preconditioner the \emph{standard low-rank} equivalent which is algebraically very close to the recently introduced methods \cite{chen2019robust,cambier2019spand}.
We note here that more accurate rank-revealing factorizations, such as SVD, could be used but are often impractical because of their computationally expensive iterative nature. 

\subsubsection{Improved accuracy on smooth near-kernel eigenvectors}
\bgroup
\setlength{\tabcolsep}{0.35em}
\begin{table}[htbp]
\renewcommand{\arraystretch}{1.1}
\small
\centering {
	\begin{tabular}{@{} c l l l l @{} }
		 	 & \multicolumn{2}{l}{$E_1 = \frac{1}{\lambda_1}\|(\Amat - \Amat_{\ell}) \Matrix{v}_1 \|_2$}
		 	 & \multicolumn{2}{l}{$E_n = \frac{1}{\lambda_n}\|(\Amat - \Amat_{\ell}) \Matrix{v}_n \|_2$}  \\ 
	 $n$ & Pcw. quad. &  Low-rank & Pcw. quad.  & Low-rank \\    
	\hline					
	4.0M & $4.5 \cdot 10^{-2}$ & $4.4 \cdot 10^{-2}$ & $1.2 \cdot 10^{0}$ & $4.3 \cdot 10^2$ \\
	6.0M & $4.5 \cdot 10^{-2}$ & $4.4 \cdot 10^{-2}$ & $1.3 \cdot 10^{0}$ & $5.6 \cdot 10^2$ \\
	8.0M & $4.5 \cdot 10^{-2}$ & $4.4 \cdot 10^{-2}$ & $1.2 \cdot 10^{0}$ & $6.8 \cdot 10^2$ \\
	12.0M & $4.5 \cdot 10^{-2}$ & $4.4 \cdot 10^{-2}$ & $1.1 \cdot 10^{0}$ & $8.9 \cdot 10^2$ \\
	16.0M & $4.7 \cdot 10^{-2}$ & $4.4 \cdot 10^{-2}$ & $1.4 \cdot 10^{0}$ & $1.1 \cdot 10^3$ \\
	\hline
	\end{tabular}
	}
\caption{Accuracy on the unit-length eigenvectors corresponding to the largest eigenvalue ($E_1$), and the smallest eigenvalue ($E_n$), for \schemeIa{} with polynomial compression, and its standard low-rank equivalent, applied to Poisson equation \autoref{eq:poisson}.}
\label{table:poisson-eig-accuracy}
\end{table}
\egroup

Since in the case of the constant-coefficient Poisson equation eigenvectors and eigenvalues are known exactly, we can compute the relative backward error $\frac{1}{\lambda_i}\|(\Amat-\Amat_{\ell})\Matrix{v}_i\|_2$ for the unit-length eigenvectors $\Matrix{v}_1$ and $\Matrix{v}_n$ corresponding to, respectively, the largest, and the smallest, eigenvalues $\lambda_1$ and $\lambda_n$, to observe the effect of \cref{col:approx}. Example results for \schemeIa{} using polynomial compression, and its standard low-rank equivalent, are shown in \autoref{table:poisson-eig-accuracy}.
The error on $\Matrix{v}_1$ is small and similar in both cases; on the other hand, the error on $\Matrix{v}_n$ is nearly constant when using polynomial compression but for its standard low-rank equivalent, the error grows with increasing system sizes and becomes three orders of magnitude larger than when using polynomial compression.

\subsubsection{Improved iteration counts}
By design, applying our standard low-rank equivalent of the given preconditioner involves identical computational cost as applying the original preconditioner. Therefore, looking at the CG iteration counts is an exact way to compare the two approaches in terms of the quality of the resulting preconditioning operators. On all test cases, we record the CG iteration counts needed for convergence. 
The iteration counts are nearly constant or grow very slowly with increasing problem sizes when using our approaches, which is often not true for their standard low-rank equivalents. The plots of CG iteration counts are shown in \autoref{fig:cg_poisson}, \autoref{fig:cg_spe}, and \autoref{fig:cg_lin_elast}. Combined with the theoretical $\OO{n}$ complexity of applying $\Amat_{\ell}^{-1},$ the results mean that we can expect our algorithms to achieve scalings close to the optimal linear scaling of the total solution time.

\begin{figure}[htbp]
    \centering
    \begin{tikzpicture}
        \begin{groupplot}[
            group style={
                group name=poisson_2,
                group size=2 by 2,
                xlabels at=edge bottom,
                xticklabels at=edge bottom,
                vertical sep=0.1cm,
                horizontal sep=0.2cm,
                ylabels at=edge left,
                yticklabels at=edge left,
            },
            xmode=log,ymode=log,ymin=5,ymax=750,
            ytick={10,30,100,300,1000},
            yticklabels={$1 \cdot 10^1$,$3 \cdot 10^1$,$1 \cdot 10^2$,$3 \cdot 10^2$,$1 \cdot 10^3$},
            xtick={10000,100000,1000000,10000000},
            xticklabels={$10^4$,$10^5$,$10^6$,$10^7$},
        ]
        \nextgroupplot[width=5.5cm,height=5cm,ylabel style={align=center},
        	ylabel={CG iterations, $it_C$ \\ {}},
        	legend entries={Pcw. const.,  Low-rank \\ equiv., Pcw. lin., Low-rank \\ equiv., Pcw. quad., Low-rank \\ equiv.},
        	legend columns = 1,
        	legend style={cells={align=left},at={(2.8,1.0)}}]
        	\addplot[PcwConst] table[x=n, y=cg]{fig/1a/poisson_pcwconst_1a.dat};
        	\addplot[PcwConstComp] table[x=n, y=cg]{fig/1a/poisson_pcwconst_1a_compare.dat};
            \addplot[PcwLin] table[x=n, y=cg]{fig/1a/poisson_pcwlin_1a.dat};
            \addplot[PcwLinComp] table[x=n, y=cg]
            {fig/1a/poisson_pcwlin_1a_compare.dat};
            \addplot[PcwQuad] table[x=n, y=cg]{fig/1a/poisson_pcwquad_1a.dat};
            \addplot[PcwQuadComp] table[x=n, y=cg]
            {fig/1a/poisson_pcwquad_1a_compare.dat};
            \node at (axis cs:1000000,500) {\schemeIa};
 
        \nextgroupplot[width=5.5cm,height=5cm]
            \addplot[PcwConst] table[x=n, y=cg]{fig/1/poisson_pcwconst_1.dat};
            \addplot[PcwConstComp] table[x=n, y=cg]{fig/1/poisson_pcwconst_1_compare.dat};            
            \addplot[PcwLin] table[x=n, y=cg]{fig/1/poisson_pcwlin_1.dat};
            \addplot[PcwLinComp] table[x=n, y=cg]
            {fig/1/poisson_pcwlin_1_compare.dat};            
            \addplot[PcwQuad] table[x=n, y=cg]{fig/1/poisson_pcwquad_1.dat};
            \addplot[PcwQuadComp] table[x=n, y=cg]
            {fig/1/poisson_pcwquad_1_compare.dat};
            \node at (axis cs:1000000,500) {\schemeI};
            
        \nextgroupplot[width=5.5cm,height=5cm,
        	ylabel style = {align=center}, 
        	ylabel={CG iterations, $it_C$ \\ {}},
        	xlabel={Problem size $n$},]
            \addplot[PcwConst] table[x=n, y=cg]{fig/1b/poisson_pcwconst_1b.dat};
            \addplot[PcwLin] table[x=n, y=cg]{fig/1b/poisson_pcwlin_1b.dat};
            \addplot[PcwQuad] table[x=n, y=cg]{fig/1b/poisson_pcwquad_1b.dat};
            \addplot[PcwConstComp] table[x=n, y=cg]{fig/1b/poisson_pcwconst_1b_compare.dat};
            \addplot[PcwLinComp] table[x=n, y=cg]
            {fig/1b/poisson_pcwlin_1b_compare.dat};
            \addplot[PcwQuadComp] table[x=n, y=cg]
            {fig/1b/poisson_pcwquad_1b_compare.dat};
            \node at (axis cs:1000000,500) {\schemeIb};
            
       \nextgroupplot[width=5.5cm,height=5cm,
       		xlabel={Problem size $n$}]
            \addplot[PcwConst] table[x=n, y=cg]{fig/2/poisson_pcwconst_2.dat};
            \addplot[PcwLin] table[x=n, y=cg]{fig/2/poisson_pcwlin_2.dat};
            \addplot[PcwQuad] table[x=n, y=cg]{fig/2/poisson_pcwquad_2.dat};
            \addplot[PcwConstComp] table[x=n, y=cg]{fig/2/poisson_pcwconst_2_compare.dat};
            \addplot[PcwLinComp] table[x=n, y=cg]
            {fig/2/poisson_pcwlin_2_compare.dat};
            \addplot[PcwQuadComp] table[x=n, y=cg]
            {fig/2/poisson_pcwquad_2_compare.dat};
            \node at (axis cs:1000000,500) {\schemeII};
        \end{groupplot}
    \end{tikzpicture}
    \caption{CG iteration counts for the 3D Poisson equation \autoref{eq:poisson}. Our methods exhibit iteration counts almost independent of the grid size which is in general not true about their low-rank equivalents.}
    \label{fig:cg_poisson}
\end{figure}
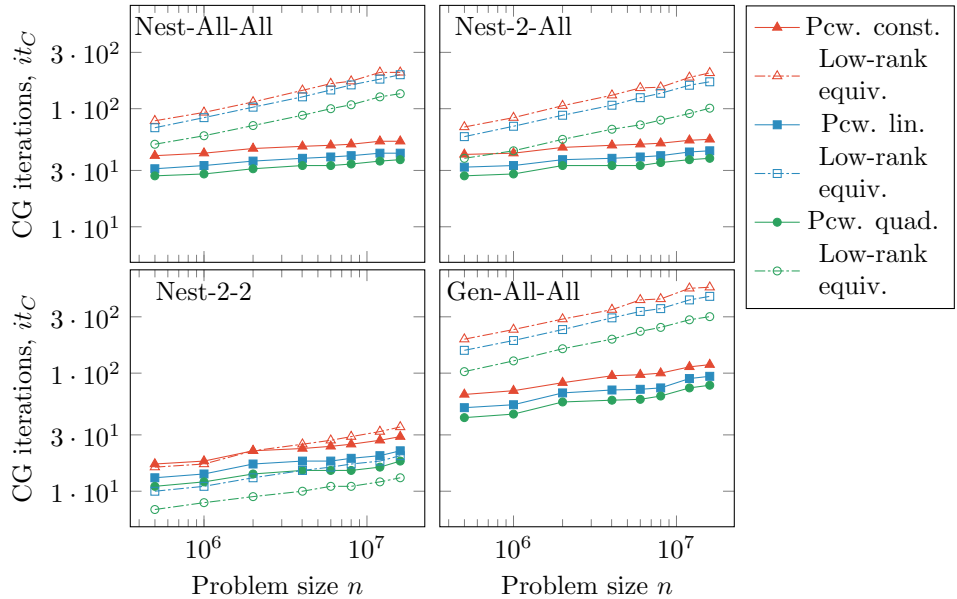

\begin{figure}[htbp]
    \centering
    \begin{tikzpicture}
        \begin{groupplot}[
            group style={
                group name=spe_1,
                group size=2 by 2,
                xlabels at=edge bottom,
                xticklabels at=edge bottom,
                vertical sep=0.2cm,
                horizontal sep=0.2cm,
                ylabels at=edge left,
                yticklabels at=edge left,
            },
            xmode=log,ymode=log,ymin=5,ymax=3000,
            ytick={10,30,100,300,1000,3000},
            yticklabels={$1 \cdot 10^1$,$3 \cdot 10^1$,$1 \cdot 10^2$,$3 \cdot 10^2$,$1 \cdot 10^3$,$3 \cdot 10^3$},
            xmin=350000,xmax=5000000,
            xtick={500000,1500000,4000000},
            xticklabels={$5 \cdot 10^5$,$1.5 \cdot 10^6$,$4 \cdot 10^6$},
        ]
        \nextgroupplot[width=5.5cm,height=4.75cm,ylabel style={align=center},
        	ylabel={CG iterations, $it_C$ \\ {}},
        	legend entries=
        	{Pcw. const., Low-rank\\equiv.,Pcw. lin., Low-rank\\equiv., Pcw. quad., Low-rank\\equiv.},
        	legend columns = 1,
        	legend style={cells={align=left},at={(2.8,1.0)}}]
            \addplot[PcwConst] table[x=n, y=cg]{fig/1a/spe_pcwconst_1a.dat};
            \addplot[PcwConstComp] table[x=n, y=cg]{fig/1a/spe_pcwconst_1a_compare.dat};
            \addplot[PcwLin] table[x=n, y=cg]{fig/1a/spe_pcwlin_1a.dat};
            \addplot[PcwLinComp] table[x=n, y=cg]
            {fig/1a/spe_pcwlin_1a_compare.dat};
            \addplot[PcwQuad] table[x=n, y=cg]{fig/1a/spe_pcwquad_1a.dat};
            \addplot[PcwQuadComp] table[x=n, y=cg]
            {fig/1a/spe_pcwquad_1a_compare.dat};
            \node at (axis cs:2500000,10) {\schemeIa};
            
        \nextgroupplot[width=5.5cm,height=4.75cm]
            \addplot[PcwConst] table[x=n, y=cg]{fig/1a/spe_pcwconst_1a.dat};
            \addplot[PcwConstComp] table[x=n, y=cg]{fig/1/spe_pcwconst_1_compare.dat};            
            \addplot[PcwLin] table[x=n, y=cg]{fig/1/spe_pcwlin_1.dat};
            \addplot[PcwLinComp] table[x=n, y=cg]
            {fig/1/spe_pcwlin_1_compare.dat};            
            \addplot[PcwQuad] table[x=n, y=cg]{fig/1/spe_pcwquad_1.dat};
            \addplot[PcwQuadComp] table[x=n, y=cg]
            {fig/1/spe_pcwquad_1_compare.dat};
            \node at (axis cs:2500000,10) {\schemeI};
            
        \nextgroupplot[width=5.5cm,height=4.75cm,
        	ylabel style = {align=center}, 
        	ylabel={CG iterations, $it_C$ \\ {}},
        	xlabel={Problem size $n$},]
            \addplot[PcwConst] table[x=n, y=cg]{fig/1b/spe_pcwconst_1b.dat};
            \addplot[PcwLin] table[x=n, y=cg]{fig/1b/spe_pcwlin_1b.dat};
            \addplot[PcwQuad] table[x=n, y=cg]{fig/1b/spe_pcwquad_1b.dat};
            \addplot[PcwConstComp] table[x=n, y=cg]{fig/1b/spe_pcwconst_1b_compare.dat};
            \addplot[PcwLinComp] table[x=n, y=cg]
            {fig/1b/spe_pcwlin_1b_compare.dat};
            \addplot[PcwQuadComp] table[x=n, y=cg]
            {fig/1b/spe_pcwquad_1b_compare.dat};
            \node at (axis cs:2500000,10) {\schemeIb};
            
       \nextgroupplot[width=5.5cm,height=4.75cm,
       		xlabel={Problem size $n$}]
            \addplot[PcwConst] table[x=n, y=cg]{fig/2/spe_pcwconst_2.dat};
            \addplot[PcwLin] table[x=n, y=cg]{fig/2/spe_pcwlin_2.dat};
            \addplot[PcwQuad] table[x=n, y=cg]{fig/2/spe_pcwquad_2.dat};
            \addplot[PcwConstComp] table[x=n, y=cg]{fig/2/spe_pcwconst_2_compare.dat};
            \addplot[PcwLinComp] table[x=n, y=cg]
            {fig/2/spe_pcwlin_2_compare.dat};
            \addplot[PcwQuadComp] table[x=n, y=cg]
            {fig/2/spe_pcwquad_2_compare.dat};
            \node at (axis cs:2500000,10) {\schemeII};
        \end{groupplot}
    \end{tikzpicture}
    \caption{CG iteration counts for the incompressible flow equation \autoref{eq:darcy}.} 
    \label{fig:cg_spe}
\end{figure}
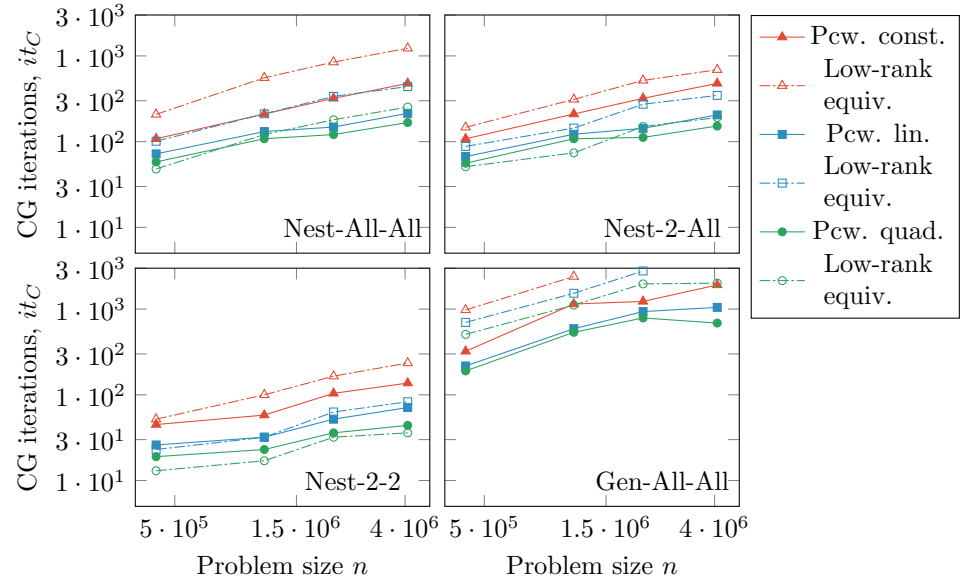

\begin{figure}[htb]
    \centering
    \begin{tikzpicture}
        \begin{groupplot}[
            group style={
                group name=lin_elast_2,
                group size=2 by 2,
                xlabels at=edge bottom,
                xticklabels at=edge bottom,
                vertical sep=0.2cm,
                horizontal sep=0.3cm,
                ylabels at=edge left,
                yticklabels at=edge left,
            },
            xmode=log,ymode=log,ymin=1,ymax=3000,
            ytick={3,10,30,100,300,1000,3000},
            yticklabels={$3 \cdot 10^0$,$1 \cdot 10^1$,$3 \cdot 10^1$,$1 \cdot 10^2$,$3 \cdot 10^2$,$1 \cdot 10^3$,$3 \cdot 10^3$},
            xtick={10000,100000,1000000,10000000},
            xticklabels={$10^4$,$10^5$,$10^6$,$10^7$},
        ]
        \nextgroupplot[width=5.5cm,height=4.75cm,ylabel style={align=center},
        	ylabel={CG iterations, $it_C$ \\ {}},   	
        	legend entries=
        	{Pcw. const.,Low-rank\\equiv.,Pcw.lin.,Low-rank\\equiv., Pcw. quad.,Low-rank\\equiv.},
        	legend columns = 1,
        	legend style={cells={align=left},at={(2.8,1.0)}}]
            \addplot[PcwConst] table[x=n, y=cg]{fig/1a/lin_el_pcwconst_1a.dat};
            \addplot[PcwConstComp] table[x=n, y=cg]{fig/1a/lin_el_pcwconst_1a_compare.dat};
            \addplot[PcwLin] table[x=n, y=cg]{fig/1a/lin_el_pcwlin_1a.dat};
            \addplot[PcwLinComp] table[x=n, y=cg]
            {fig/1a/lin_el_pcwlin_1a_compare.dat};
            \addplot[PcwQuad] table[x=n, y=cg]{fig/1a/lin_el_pcwquad_1a.dat};  
            \addplot[PcwQuadComp] table[x=n, y=cg]
            {fig/1a/lin_el_pcwquad_1a_compare.dat};
            \node at (axis cs:60000,1500) {\schemeIa};
            
        \nextgroupplot[width=5.5cm,height=4.75cm]
            \addplot[PcwConst] table[x=n, y=cg]{fig/1/lin_el_pcwconst_1.dat};
            \addplot[PcwConstComp] table[x=n, y=cg]{fig/1/lin_el_pcwconst_1_compare.dat};            
            \addplot[PcwLin] table[x=n, y=cg]{fig/1/lin_el_pcwlin_1.dat};
            \addplot[PcwLinComp] table[x=n, y=cg]{fig/1/lin_el_pcwlin_1_compare.dat};            
            \addplot[PcwQuad] table[x=n, y=cg]{fig/1/lin_el_pcwquad_1.dat};
            \addplot[PcwQuadComp] table[x=n, y=cg]
            {fig/1/lin_el_pcwquad_1_compare.dat};
            \node at (axis cs:60000,1500) {\schemeI};
            
        \nextgroupplot[width=5.5cm,height=4.75cm,
        	ylabel style = {align=center}, 
        	ylabel={CG iterations, $it_C$ \\ {}},
        	xlabel={Problem size $n$},]
            \addplot[PcwConst] table[x=n, y=cg]{fig/1b/lin_el_pcwconst_1b.dat};
            \addplot[PcwLin] table[x=n, y=cg]{fig/1b/lin_el_pcwlin_1b.dat};
            \addplot[PcwQuad] table[x=n, y=cg]{fig/1b/lin_el_pcwquad_1b.dat};
            \addplot[PcwConstComp] table[x=n, y=cg]{fig/1b/lin_el_pcwconst_1b_compare.dat};
            \addplot[PcwLinComp] table[x=n, y=cg]
            {fig/1b/lin_el_pcwlin_1b_compare.dat};
            \addplot[PcwQuadComp] table[x=n, y=cg]
            {fig/1b/lin_el_pcwquad_1b_compare.dat};
            \node at (axis cs:60000,1500) {\schemeIb};
            
       \nextgroupplot[width=5.5cm,height=4.75cm,
       		xlabel={Problem size $n$}]
            \addplot[PcwConst] table[x=n, y=cg]{fig/2/lin_el_pcwconst_2.dat};
            \addplot[PcwLin] table[x=n, y=cg]{fig/2/lin_el_pcwlin_2.dat};
            \addplot[PcwQuad] table[x=n, y=cg]{fig/2/lin_el_pcwquad_2.dat};
            \addplot[PcwConstComp] table[x=n, y=cg]{fig/2/lin_el_pcwconst_2_compare.dat};
            \addplot[PcwLinComp] table[x=n, y=cg]
            {fig/2/lin_el_pcwlin_2_compare.dat};
            \addplot[PcwQuadComp] table[x=n, y=cg]
            {fig/2/lin_el_pcwquad_2_compare.dat};
            \node at (axis cs:50000,1500) {\schemeII};
        \end{groupplot}
    \end{tikzpicture}
    \caption{CG iterations counts for the linear elasticity \autoref{eq:lin_el}.
    Computations using low-rank equivalent that did not converge within the time limit, were not recorded.}
    \label{fig:cg_lin_elast}
\end{figure}
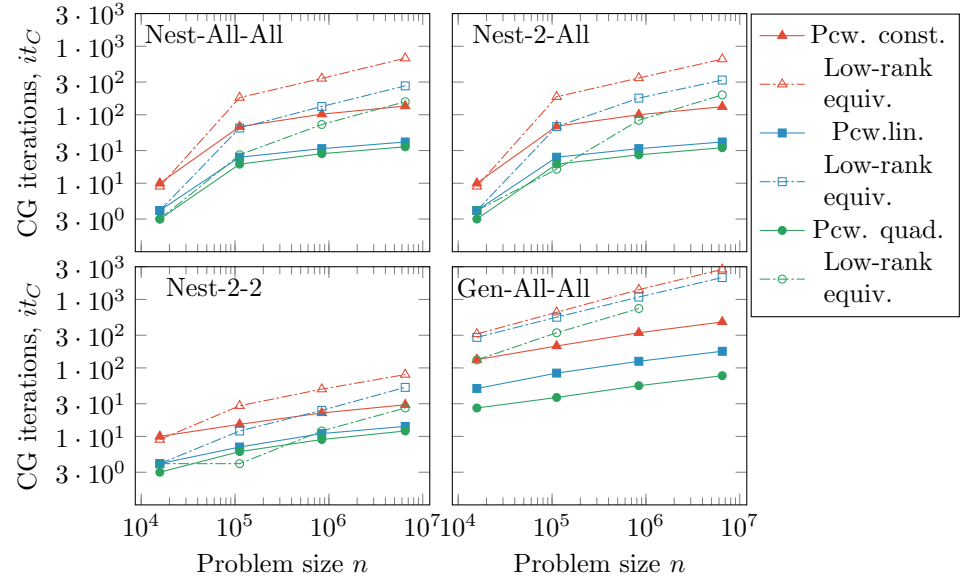

\subsubsection{Nearly optimal scalings of solution times}
In \autoref{fig:solution_time_lin_elast} and \autoref{fig:total_time_scaling_spe}, we plot the CPU times needed to solve the two more challenging problems (\autoref{eq:darcy}, \autoref{eq:lin_el}).
The graphs corresponding to our methods appear parallel to the $\OO{n}$ bounding lines which is not true for many of the standard low-rank equivalents.

\begin{figure}[htb]
    \centering
    \begin{tikzpicture}
        \begin{groupplot}[
            group style={
                group name=lin_elast_3,
                group size=2 by 2,
                xlabels at=edge bottom,
                xticklabels at=edge bottom,
                vertical sep=0.2cm,
                horizontal sep=0.3cm,
                ylabels at=edge left,
                yticklabels at=edge left,
            },
            xmode=log,ymode=log,ymin=0,ymax=30000,
            xtick={10000,100000,1000000,10000000},
            xticklabels={$10^4$,$10^5$,$10^6$,$10^7$},
        ]
        \nextgroupplot[width=5.5cm,height=4.75cm,ylabel style={align=center},
        	ylabel={Solution time \\ $t_S$ (sec.)},
        	legend entries=
        	{Pcw. const., Low-rank\\equiv., Pcw. lin.,  Low-rank\\equiv., Pcw. quad., Low-rank\\equiv., $\OO{n}$,$\OO{n^{3/2}}$},
        	legend columns = 1,
        	legend style={cells={align=left},at={(2.8,1.0)}}]
            \addplot[PcwConst] table[x=n, y=ts]{fig/1a/lin_el_pcwconst_1a.dat};
            \addplot[PcwConstComp] table[x=n, y=ts]{fig/1a/lin_el_pcwconst_1a_compare.dat};
            \addplot[PcwLin] table[x=n, y=ts]{fig/1a/lin_el_pcwlin_1a.dat};
            \addplot[PcwLinComp] table[x=n, y=ts]
            {fig/1a/lin_el_pcwlin_1a_compare.dat};
            \addplot[PcwQuad] table[x=n, y=ts]{fig/1a/lin_el_pcwquad_1a.dat};
            \addplot[PcwQuadComp] table[x=n, y=ts]
            {fig/1a/lin_el_pcwquad_1a_compare.dat};
            \addplot [domain=15795:6502275,dashed]{2e-3 * x};
            \addplot [domain=15795:6502275,Dashed2]{6e-6 * x^(1.5)};
            \addplot [domain=15795:6502275,dashed]{1e-5 * x}; 
            \node at (axis cs:60000,10000) {\schemeIa};
            
        \nextgroupplot[width=5.5cm,height=4.75cm]
            \addplot [domain=15795:6502275,dashed]{2e-3 * x};
            \addplot [domain=15795:6502275,Dashed2]{6e-6 * x^(1.5)};
            \addplot [domain=15795:6502275,dashed]{1e-5 * x};        	
            \addplot[PcwConst] table[x=n, y=ts]{fig/1/lin_el_pcwconst_1.dat};
            \addplot[PcwConstComp] table[x=n, y=ts]
            {fig/1/lin_el_pcwconst_1_compare.dat};            
            \addplot[PcwLin] table[x=n, y=ts]{fig/1/lin_el_pcwlin_1.dat};
            \addplot[PcwLinComp] table[x=n, y=ts]
            {fig/1/lin_el_pcwlin_1_compare.dat};            
            \addplot[PcwQuad] table[x=n, y=ts]{fig/1/lin_el_pcwquad_1.dat};
            \addplot[PcwQuadComp] table[x=n, y=ts]
            {fig/1/lin_el_pcwquad_1_compare.dat};
            
            \addplot [domain=15795:6502275,dashed]{2e-3 * x};
            \addplot [domain=15795:6502275,Dashed2]{6e-6 * x^(1.5)};
            \addplot [domain=15795:6502275,dashed]{1e-5 * x};
            
            \node at (axis cs:60000,10000) {\schemeI};

        \nextgroupplot[width=5.5cm,height=4.75cm,
        	ylabel style = {align=center}, 
        	ylabel={Solution time \\ $t_S$ (sec.)},
        	xlabel={Problem size $n$},]
            \addplot[PcwConst] table[x=n, y=ts]{fig/1b/lin_el_pcwconst_1b.dat};
            \addplot[PcwLin] table[x=n, y=ts]{fig/1b/lin_el_pcwlin_1b.dat};
            \addplot[PcwQuad] table[x=n, y=ts]{fig/1b/lin_el_pcwquad_1b.dat};
            gnhtnym\addplot[PcwConstComp] table[x=n, y=ts]{fig/1b/lin_el_pcwconst_1b_compare.dat};
            \addplot[PcwLinComp] table[x=n, y=ts]
            {fig/1b/lin_el_pcwlin_1b_compare.dat};
            \addplot[PcwQuadComp] table[x=n, y=ts]
            {fig/1b/lin_el_pcwquad_1b_compare.dat};
            \node at (axis cs:60000,10000) {\schemeIb};

            \addplot [domain=15795:6502275,dashed]{2e-3 * x};
            \addplot [domain=15795:6502275,Dashed2]{6e-6 * x^(1.5)};
            \addplot [domain=15795:6502275,dashed]{1e-5 * x};
       
       \nextgroupplot[width=5.5cm,height=4.75cm,
       		xlabel={Problem size $n$}]
           \addplot[PcwConst] table[x=n, y=ts]{fig/2/lin_el_pcwconst_2.dat};
            \addplot[PcwLin] table[x=n, y=ts]{fig/2/lin_el_pcwlin_2.dat};
            \addplot[PcwQuad] table[x=n, y=ts]{fig/2/lin_el_pcwquad_2.dat};
            \addplot[PcwConstComp] table[x=n, y=ts]{fig/2/lin_el_pcwconst_2_compare.dat};
            \addplot[PcwLinComp] table[x=n, y=ts]
            {fig/2/lin_el_pcwlin_2_compare.dat};
            \addplot[PcwQuadComp] table[x=n, y=ts]
            {fig/2/lin_el_pcwquad_2_compare.dat};
            \node at (axis cs:50000,10000) {\schemeII};

            \addplot [domain=15795:6502275,dashed]{2e-3 * x};
            \addplot [domain=15795:6502275,Dashed2]{6e-6 * x^(1.5)};
            \addplot [domain=15795:6502275,dashed]{1e-5 * x};            
            
        \end{groupplot}
    \end{tikzpicture}
    \caption{Solution times for the linear elasticity equation \autoref{eq:lin_el}. 
    }
    \label{fig:solution_time_lin_elast}
\end{figure}
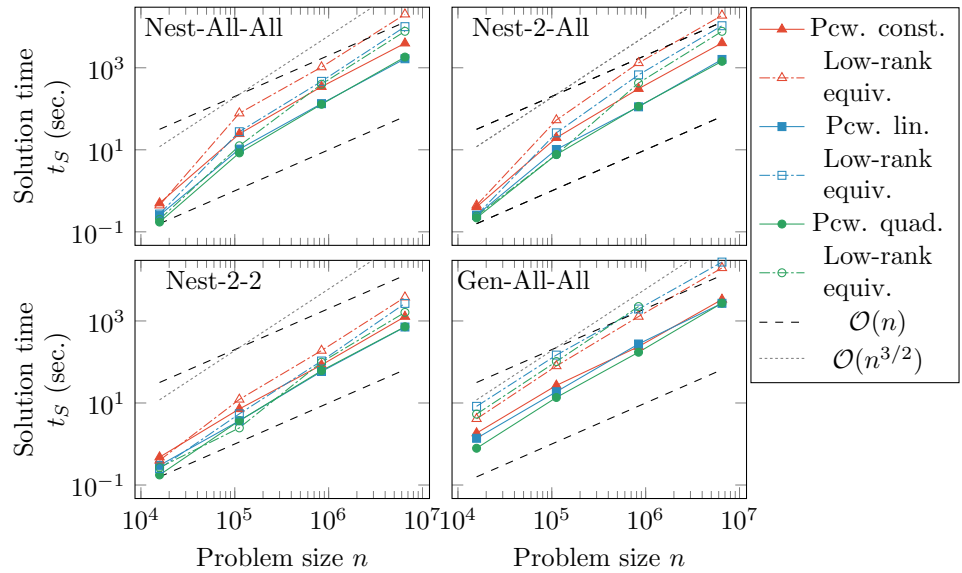

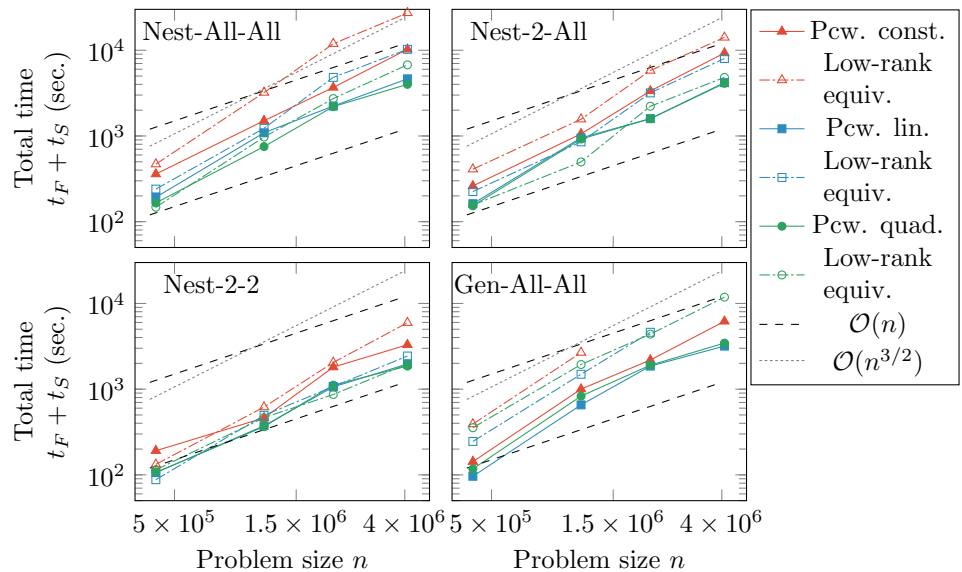
\begin{figure}[htb]
    \centering
    \begin{tikzpicture}
        \begin{groupplot}[
            group style={
                group name=spe_2,
                group size=2 by 2,
                xlabels at=edge bottom,
                xticklabels at=edge bottom,
                vertical sep=0.2cm,
                horizontal sep=0.3cm,
                ylabels at=edge left,
                yticklabels at=edge left,
            },
            xmode=log,ymode=log,ymin=50,ymax=30000,
            xmin=350000,xmax=5000000,
            xtick={500000,1500000,4000000},
            xticklabels={$5 \times 10^5$,$1.5 \times 10^6$,$4 \times 10^6$},
        ]
        \nextgroupplot[width=5.5cm,height=4.75cm,
        	ylabel style={align=center},ylabel={Total time \\$t_F + t_S$ (sec.)},
        	legend entries=
        	{Pcw. const., Low-rank\\equiv., Pcw. lin., Low-rank\\equiv., Pcw. quad., Low-rank\\equiv.,  $\OO{n}$, $\OO{n^{3/2}}$},
        	legend columns = 1,
        	legend style={cells={align=left},at={(2.8,1.0)}}]      
            \addplot[PcwConst]
            table[x=n, y=tf+ts]{fig/1a/spe_pcwconst_1a.dat};
            \addplot[PcwConstComp] 
            table[x=n, y=tf+ts]{fig/1a/spe_pcwconst_1a_compare.dat};            
            \addplot[PcwLin]
            table[x=n, y=tf+ts]{fig/1a/spe_pcwlin_1a.dat};
            \addplot[PcwLinComp]
            table[x=n, y=tf+ts]{fig/1a/spe_pcwlin_1a_compare.dat};            
            \addplot[PcwQuad]
            table[x=n, y=tf+ts]{fig/1a/spe_pcwquad_1a.dat};
            \addplot[PcwQuadComp]
            table[x=n, y=tf+ts]{fig/1a/spe_pcwquad_1a_compare.dat};
            
            \addplot [domain=400000:4000000,dashed]{3e-3 * x};
            \addplot [domain=400000:4000000,Dashed2]{3e-6 * x^(1.5)};
            \addplot [domain=400000:4000000,dashed]{3e-4 * x};
            \node at (axis cs:700000,17000) {\schemeIa};
            
        \nextgroupplot[width=5.5cm,height=4.75cm]
            \addplot[PcwConst] 
            table[x=n, y=tf+ts]{fig/1/spe_pcwconst_1.dat};
            \addplot[PcwConstComp] 
            table[x=n, y=tf+ts]{fig/1/spe_pcwconst_1_compare.dat};            
            \addplot[PcwLin]
            table[x=n, y=tf+ts]{fig/1/spe_pcwlin_1.dat};
            \addplot[PcwLinComp]
            table[x=n, y=tf+ts]{fig/1/spe_pcwlin_1_compare.dat};            
            \addplot[PcwQuad]
            table[x=n, y=tf+ts]{fig/1/spe_pcwquad_1.dat};
            \addplot[PcwQuadComp]
            table[x=n, y=tf+ts]{fig/1/spe_pcwquad_1_compare.dat};                      
            
            \addplot [domain=400000:4000000,dashed]{3e-3 * x};
            \addplot [domain=400000:4000000,Dashed2]{3e-6 * x^(1.5)};
            \addplot [domain=400000:4000000,dashed]{3e-4 * x};
            \node at (axis cs:700000,17000) {\schemeI};

        \nextgroupplot[width=5.5cm,height=4.75cm,
        	ylabel style = {align=center}, 
        	ylabel={Total time \\$t_F + t_S$ (sec.)},
        	xlabel={Problem size $n$},]
            \addplot[PcwConst]
            table[x=n, y=tf+ts]{fig/1b/spe_pcwconst_1b.dat};
            \addplot[PcwLin]
            table[x=n, y=tf+ts]{fig/1b/spe_pcwlin_1b.dat};
            \addplot[PcwQuad]
            table[x=n, y=tf+ts]{fig/1b/spe_pcwquad_1b.dat};
            
            \addplot[PcwConstComp] 
            table[x=n, y=tf+ts]{fig/1b/spe_pcwconst_1b_compare.dat};
            \addplot[PcwLinComp]
            table[x=n, y=tf+ts]{fig/1b/spe_pcwlin_1b_compare.dat};
            \addplot[PcwQuadComp]
            table[x=n, y=tf+ts]{fig/1b/spe_pcwquad_1b_compare.dat};            
            
            \addplot [domain=400000:4000000,dashed]{3e-3 * x};
            \addplot [domain=400000:4000000,Dashed2]{3e-6 * x^(1.5)};
            \addplot [domain=400000:4000000,dashed]{3e-4 * x};
            \node at (axis cs:700000,17000) {\schemeIb};
            
       \nextgroupplot[width=5.5cm,height=4.75cm,
       		xlabel={Problem size $n$}]
            \addplot[PcwConst]
            table[x=n, y=tf+ts]{fig/2/spe_pcwconst_2.dat};
            \addplot[PcwLin]
            table[x=n, y=tf+ts]{fig/2/spe_pcwlin_2.dat};
            \addplot[PcwQuad]
            table[x=n, y=tf+ts]{fig/2/spe_pcwquad_2.dat};

            \addplot[PcwConstComp] 
            table[x=n, y=tf+ts]{fig/2/spe_pcwconst_2_compare.dat};
            \addplot[PcwLinComp]
            table[x=n, y=tf+ts]{fig/2/spe_pcwlin_2_compare.dat};
            \addplot[PcwQuadComp]
            table[x=n, y=tf+ts]{fig/2/spe_pcwquad_2_compare.dat};                        
            
            \addplot [domain=400000:4000000,dashed]{3e-3 * x};
            \addplot [domain=400000:4000000,Dashed2]{3e-6 * x^(1.5)};
            \addplot [domain=400000:4000000,dashed]{3e-4 * x};        
            \node at (axis cs:650000,17000) {\schemeII};
        \end{groupplot}
    \end{tikzpicture}
    \caption{Total solution times (including factorization and the CG iteration) for the incompressible flow problem \autoref{eq:darcy}.}
    \label{fig:total_time_scaling_spe}
\end{figure}

\subsection{Choosing the appropriate preconditioner}
\schemeIb{} exhibits lowest and nearly constant CG iteration counts in all tested cases. Inevitably, it has largest memory requirements as well as the cost of applying $\Amat_{\ell}^{-1}$. \schemeI{} and \schemeIa{} performed almost identically. Therefore, \schemeIa{} is likely preferable of the two, with $\OO{n}$ factorization complexity guarantees (instead of $\OO{n \log{n}}$). Polynomial compression allows therefore for bounding the sizes of the nodes appearing in \schemeIa{}, without losing accuracy (see the proof of \cref{thm:proposition-complexity}). We expect this to be of importance for parallel scaling of the algorithm. 

In the tests, \schemeII{} performed competitively in terms of solution times despite higher iteration counts. Notice that $\Amat_{\ell}^{-1}$ is a product of block diagonal matrices with one block per each node. This suggests that \schemeII{} also has very promising properties for massive parallelization.
However, higher iteration counts may make \schemeII{} less practical on very ill-conditioned problems.


\section{Conclusions and future work}\label{sec:conclusions}
The numerical results show that \algfull{} (\algo) gives rise to robust preconditioners, exhibiting optimal or near optimal scalings of solution times. Our methods are based on a special treatment of the near-kernel smooth eigenvectors, directly targeting the fundamental limitations of hierarchical matrix approaches. Moreover, the factorization is guaranteed to succeed in exact arithmetic, and the preconditioners are SPD operators, so Conjugate Gradient can always be used. Also, polynomial compression can allow avoiding costly rank-revealing decompositions---which have been reported as the main computational bottleneck of hierarchical solvers \cite{cambier2019spand,pouransari2017fast,sushnikova2018compress}. We did not find polynomial compression to be a bottleneck in our implementation.

Other strategies for designing preconditioners based on \algo{} than described in this paper, can be considered. While our examples were discretized on regular cartesian grids, partitionings on general grids are applicable (see e.g. \cite{cambier2019spand} for details). Also, combining the polynomial compression with low-rank approximation may result in more efficient methods. In the absence of geometrical information (e.g., when only the system matrix is known), piecewise constant vectors would be then used. Moreover, polynomial compression can be applied almost verbatim in an approach that would only compress the well-separated interactions in a general domain partitioning (such approaches include \cite{pouransari2017fast,sushnikova2018compress}).



Similar to \cite{ho2016hierarchical,cambier2019spand} our algorithms have very promising parallel properties. Thanks to compression, most computations are performed in the initial levels of the algorithm, contrasting direct solvers such as nested dissection multifrontal, where the exact factorization of the largest block can dominate the computations. We believe that the bounded sizes of nodes ensured by polynomial compression may be of further importance for parallel scaling of the algorithm. Efficient parallel implementations of hierarchical approaches such as ours is a topic of active research \cite{chen2008algorithm,li2017distributed}.



\FloatBarrier
\section*{Acknowledgments}
This work was partly funded by the Stanford University Petroleum Research Institute's Reservoir Simulation Industrial Affiliates Program (SUPRI-B). We are very grateful to prof.\ Hamdi Tchelepi for interest in our methods and useful suggestions. We would also like to thank Pavel Tomin and Sergey Klevtsov for helping us prepare the SPE10 reservoir benchmark case, and L\'{e}opold Cambier for invaluable discussions about the hierarchical factorizations and test cases.

The computing for this project was performed on the Stanford University Sherlock cluster. We would like to thank the Stanford Research Computing Center for providing computational resources and support.

\bibliographystyle{siamplain}
\bibliography{references}
\end{document}